\newtheorem{thm}{Theorem}[section]
\newtheorem{lem}[thm]{Lemma}
\newtheorem{prop}[thm]{Proposition}
\newtheorem{rem}[thm]{Remark}
\newtheorem{defn}[thm]{Definition}
\newcommand{\R}{\mathbb{R}}
\newcommand{\N}{\mathbb{N}}
\renewcommand{\S}{\mathbb{S}}
\thanks{\it 2010 Mathematics Subject
 Classification:  35J62, 35J92, 35B06, 35B50, 35B51}
\begin{document}

\parindent 0pc
\parskip 6pt
\overfullrule=0pt

\title{On the Gibbons'  conjecture for equations involving the $p$-Laplacian}

\author{Francesco Esposito*, Alberto Farina$^+$, Luigi Montoro* and Berardino Sciunzi*}

\thanks{F. Esposito and L. Montoro and B. Sciunzi were partially supported by PRIN project  2017JPCAPN (Italy): {\em Qualitative and quantitative aspects of nonlinear PDEs}. F. Esposito and A. Farina were partially supported by PICS 2018 - VALABLE}

\address{*Dipartimento di Matematica e Informatica, UNICAL,
Ponte Pietro  Bucci 31B, 87036 Arcavacata di Rende, Cosenza, Italy.}

\email{esposito@mat.unical.it}

\email{montoro@mat.unical.it}

\email{sciunzi@mat.unical.it}

\address{+ Universit\'e de Picardie Jules Verne, LAMFA, CNRS UMR
7352, 33, rue Saint-Leu 80039 Amiens, France}

\email{alberto.farina@u-picardie.fr}


\begin{abstract}
In this paper we prove the validity of Gibbons' conjecture for the quasilinear elliptic equation $ -\Delta_p u = f(u) $ on $\R^N.$ The result holds true for $(2N+2)/(N+2) < p < 2$ and for a very general class of nonlinearity $f$.
\end{abstract}

\maketitle

\section{Introduction}
In this work we are  concerned with the study of qualitative properties of weak solutions of class $C^1$ to the quasilinear elliptic equation
\begin{equation}\label{equation1}\tag{$\mathcal P$}
-\Delta_p u = f(u) \quad \text{in} \,\, \R^N,
\end{equation}
where we denote a generic point of $\mathbb{R}^N$ by $(x',y)$ with $x'=(x_1,x_2, \ldots, x_{N-1}) \in \R^{N-1}$ and $y=x_N \in \R$, $p>1$ 
and $N > 1$. 
The nonlinear function $f$ will be assumed to satisfy the following  assumptions : 
\begin{equation*}
{(h_f)} \qquad \begin{cases}
f \in C^1([-1,1]), \quad f(-1)= f(1)=0, & \\
f_+'(-1)<0,  \quad f'_-(1)<0, & \\ 
\mathcal{N}_f:=\{t \in [-1,1] \ | \ f(t)=0\} \,\, {\text {is a finite set.}} & 
\end{cases}
\end{equation*}
A very special case covered by our assumptions is the well-known semilinear Allen-Cahn equation
\begin{equation}\label{eq:degiorgibb}
-\Delta u =u(1-u^2)\quad \text{in}\,\, \mathbb R^N,
\end{equation}
for which the following conjecture have been stated



{\sc Gibbons' conjecture \cite{Car}} {\textit {Assume $N>1$ and consider a bounded solution $u \in C^2(\mathbb R^N)$ 
of \eqref{eq:degiorgibb} such that
\[\lim_{x_N\rightarrow \pm \infty}u(x',x_N)=\pm 1,\]
uniformly with respect to $x'$. Then,  is it  true that
\[u(x)=\tanh \left(\frac{x_N-\alpha}{\sqrt 2} \right),\]
for some $\alpha\in \mathbb R$?}}


Gibbons' conjecture was proven independently and with different methods by \cite{BBG, BHM, Far99, FarLincei} (see also \cite{ FarDisc, Fa} for further results in the semilinear scalar case and \cite{FarSciuSo} for a recent result concerning some related semilinear elliptic systems).
Here we study Gibbons' conjecture for the quasilinear equation \eqref{equation1}. To the best of our knowledge, there are no general results in this framework. This lack of results is mainly due to the fact that, unlike the  semilinear case, when working with the singular operator 
$-\Delta_p(\cdot)$, both the weak and the strong comparison principles might fail. This (possible) failure being caused either by the presence of critical points or by the fact that the nonlinearity $f$ changes sign. Those difficulties are even more magnified by the fact that we are facing a problem on an unbounded domain, the entire euclidean space $\R^N.$ Also, in the pure quasilinear case, $p\neq 2$, we cannot exploit the usual arguments and tricks related to the linearity of the Laplace operator. Despite all those problems and difficulties, we are able to study and solve the quasilinear version of Gibbons' conjecture by making use of the the celebrated moving planes method which goes back to the papers of Alexandrov \cite{A} and Serrin  \cite{serrin} (see  also \cite{BN, GNN}).

\noindent Our main result is the following
\begin{thm}\label{thm:gibb}
\label{1Dsolution}
Assume $N > 1$,  ${(2N+2)}/{(N+2)} < p < 2$ and let $u \in C^1(\R^N)$ be a weak solution of
\eqref{equation1}, such that
\[|u|\leq1 \qquad  {\text {on}}\quad \R^N\]
and
\begin{equation}\label{inftyassumptions}
\lim_{y \rightarrow + \infty} u(x',y) = 1 \quad \text{and} \quad
\lim_{y \rightarrow - \infty} u(x',y) =-1,
\end{equation}
uniformly with respect to $x' \in \R^{N-1}$. If $f$ fulfills  {\bf ($h_f$)}, then $u$ depends only on $y$ and
\begin{equation}\label{monotonicityxn}
\partial_{y} u > 0 \quad \text{in} \quad \R^N.
\end{equation}
\end{thm}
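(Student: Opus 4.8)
\emph{Strategy.} The plan is to run the moving planes method in every direction orthogonal to the $x_N$--axis, obtaining first that $u$ is independent of $x'$, and then to analyse the resulting autonomous one--dimensional equation to get \eqref{monotonicityxn}. Two structural facts drive the whole argument: by $(h_f)$ the nonlinearity $f$ is strictly decreasing near $\pm1$, so comparison principles for $-\Delta_p$ are available wherever $u$ is close to $\pm1$; and, by the uniform limits \eqref{inftyassumptions}, ``$u$ close to $\pm1$'' happens outside a fixed slab $\{|x_N|\le M\}$, uniformly in $x'$.

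\emph{Asymptotic reduction.} From $f'_+(-1)<0$ and $f'_-(1)<0$ I fix $\delta_0\in(0,1)$ such that $f$ is strictly decreasing on $[-1,-1+\delta_0]$ and on $[1-\delta_0,1]$, and from \eqref{inftyassumptions} I fix $M>0$ with $u(x',x_N)>1-\delta_0$ for $x_N>M$ and $u(x',x_N)<-1+\delta_0$ for $x_N<-M$, uniformly in $x'\in\R^{N-1}$. I also record that, by the regularity theory, $u\in C^{1,\alpha}_{\mathrm{loc}}(\R^N)$ with local estimates depending only on $\|u\|_\infty\le1$ and on $f$, hence uniform over $\R^N$; in particular $\nabla u$ is globally bounded and locally equicontinuous, which makes translation--compactness arguments available, and every translate or reflection of $u$ solves \eqref{equation1} with the same uniform limits at $x_N=\pm\infty$.

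\emph{Moving planes in the directions orthogonal to $e_N$.} Fix $e=e_1:=(1,0,\dots,0)$ (any direction with $e\cdot e_N=0$ is treated the same way after a rotation). For $\lambda\in\R$ set $\Sigma_\lambda=\{x_1<\lambda\}$, $x^\lambda=(2\lambda-x_1,x_2,\dots,x_N)$, $u_\lambda(x)=u(x^\lambda)$ and $w_\lambda=u_\lambda-u$. The core assertion is
\[w_\lambda\ge0\quad\text{in }\Sigma_\lambda,\qquad\text{for every }\lambda\in\R.\]
Since $u$ and $u_\lambda$ share the same uniform limits $\pm1$ as $x_N\to\pm\infty$, one has $w_\lambda\to0$ as $|x_N|\to\infty$, uniformly with respect to $(x_1,\dots,x_{N-1})$ with $x_1<\lambda$, while $w_\lambda\equiv0$ on the wall $\{x_1=\lambda\}$. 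Outside the slab, where $f$ is monotone decreasing, the weak comparison principle for $-\Delta_p$ (with a monotone zero--order term) prevents $w_\lambda$ from becoming negative there unless a negative value comes in from the slab; inside the slab, where $f$ need not be monotone, one controls the connected components of $\{w_\lambda<0\}$ by the weak comparison principle for $-\Delta_p$ on sets of small measure, after localizing in $x'$ and exploiting the smallness of $w_\lambda$ near $x_N=\pm\infty$ (this is the adaptation to $-\Delta_p$ of the maximum--principle--in--unbounded--domains arguments used for Gibbons' conjecture in the semilinear case), and a strong comparison principle for $-\Delta_p$ is used, together with the reflection $x\mapsto x^\lambda$, to close the remaining borderline cases. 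Making all of this rigorous --- i.e.\ getting the weak and the strong comparison principles for the degenerate/singular operator $-\Delta_p$ to work in an unbounded domain, in the presence of critical points of $u$ and of a sign--changing $f$ --- is \emph{the main obstacle}; it is handled through the fine summability of $|\nabla u|^{-1}$ on compact sets and the attached weighted Sobolev/Poincar\'e inequalities, and this is exactly the step where the restriction $(2N+2)/(N+2)<p<2$ is needed.

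\emph{Conclusion.} Granting the assertion, for $a<b$ choose $\lambda=(a+b)/2$: the point $(a,x_2,\dots,x_N)$ lies in $\Sigma_\lambda$ and its reflection is $(b,x_2,\dots,x_N)$, so $w_\lambda\ge0$ gives $u(a,x_2,\dots,x_N)\le u(b,x_2,\dots,x_N)$; thus $u$ is nondecreasing in $x_1$. Repeating the argument with $e=-e_1$ yields the reverse inequality, so $u$ is independent of $x_1$; as $e_1$ was an arbitrary direction orthogonal to $e_N$ (here $N>1$ is used), $u=u(x_N)$. Then $u$ solves the autonomous equation $-(|u'|^{p-2}u')'=f(u)$ on $\R$ with $u(\pm\infty)=\pm1$, and the Hamiltonian $H(t)=\tfrac{p-1}{p}|u'(t)|^p+F(u(t))$, where $F(t)=\int_0^t f$, is of class $C^1$ (since $|u'|^{p-2}u'\in C^1$) and satisfies $H'\equiv0$ by the equation, hence is constant. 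Evaluating at $t\to\pm\infty$, where $u'\to0$ by the uniform $C^{1,\alpha}$ bounds and the existence of the limits of $u$, gives $H\equiv F(1)=F(-1)$, whence $\tfrac{p-1}{p}|u'(t)|^p=F(1)-F(u(t))\ge0$ for all $t$; since $u$ takes every value of $(-1,1)$, continuity of $F$ yields $F\le F(1)$ on $[-1,1]$. If $u'(\bar t)=0$ for some $\bar t\in\R$, then $F(u(\bar t))=\max_{[-1,1]}F$, so $f(u(\bar t))=0$ (either because $u(\bar t)\in\{-1,1\}$, or because $u(\bar t)$ is an interior maximizer of $F$, in which case $u(\bar t)\in\mathcal N_f$); since for $p<2$ the inverse of $\xi\mapsto|\xi|^{p-2}\xi$ is locally Lipschitz, the first order system equivalent to the equation has a locally Lipschitz right--hand side, so its solution through the point $(u(\bar t),0)$ at $t=\bar t$ is the constant $u(\bar t)$, contradicting $u(\pm\infty)=\pm1$. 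Hence $u'$ never vanishes; being continuous with $u(+\infty)>u(-\infty)$, it is strictly positive on $\R$, which is \eqref{monotonicityxn}.
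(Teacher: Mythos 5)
Your route is genuinely different from the paper's, and the difference conceals a gap. You propose to run moving planes with respect to the vertical hyperplanes $\{x_1=\lambda\}$, claiming $u_\lambda\ge u$ in $\{x_1<\lambda\}$ for every $\lambda\in\R$, and then to pass to a one-dimensional ODE. The weak point is the slab $\{|x_N|\le M\}$. Outside it, $f$ is strictly decreasing near $\pm1$ and comparison does close (this is essentially Proposition \ref{weakcomparisonprinciple}). Inside it you invoke a ``weak comparison principle on sets of small measure, after localizing in $x'$,'' but the set $\{w_\lambda<0\}\cap\{|x_N|\le M\}\cap\{x_1<\lambda\}$ is not a priori small in measure, not narrow in the $x_N$-direction, and not contained in a small-gradient region; the narrow-domain comparison principle (Theorem~\ref{compprinciplenarrow}) therefore has nothing to bite on, since its hypotheses require the bad set to be, fibre by fibre, of small $x_N$-width or contained in $\{|\nabla u|<\epsilon\}$. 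In the paper that smallness is manufactured by the structure of the $x_N$-moving plane: Lemma~\ref{gradBelow} gives a uniform gradient lower bound near $x_N=-\infty$, and Proposition~\ref{localizationSupport} squeezes $\text{supp}\,W^+_\varepsilon$ into $\{\bar\lambda-\mu\le x_N\le\bar\lambda+\varepsilon\}\cup\{|\nabla u|\le\kappa\}$. No analogue of this localisation exists when the reflecting hyperplane is vertical, because the two half-spaces $\{x_1<\lambda\}$, $\{x_1>\lambda\}$ play symmetric roles and the uniform limits in $x_N$ give no starting advantage in $x_1$. There is also no way to exclude critical points of $u$ inside the slab before one knows $\partial_{x_N}u>0$ there, so the strong comparison principle (Theorem~\ref{classicalSCP}) cannot be applied freely either. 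This is exactly why the paper (following \cite{Far99}) first establishes $\partial_{x_N}u\ge0$ in $\R^N$ (Proposition~\ref{monotonicitybis}) and then transports the monotonicity to every direction of the upper hemisphere $\S^{N-1}_+$ via the open-closed argument of Lemma~\ref{monCone} and Proposition~\ref{monRncone}, obtaining $\partial_{x_i}u\equiv0$ for $i<N$ only at the very end.

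Your final ODE step, granting $u=u(x_N)$, is correct and rather elegant: from the equation $|u'|^{p-2}u'\in C^1(\R)$, the Hamiltonian $H=\frac{p-1}{p}|u'|^p+F(u)$ is constant, and since for $1<p<2$ the inverse of $\xi\mapsto|\xi|^{p-2}\xi$ is locally Lipschitz, uniqueness for the associated first-order system through $(u(\bar t),0)$ with $f(u(\bar t))=0$ forces a constant trajectory, contradicting $u(\pm\infty)=\pm1$. This is a clean alternative to the H\"opf-lemma argument the paper uses at the end of the proof of Theorem~\ref{thm:gibb}. But it only supplies the final, easy step; the one-dimensional reduction, which is the heart of the theorem, is not substantiated by your moving-plane claim.
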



To get our main result, we  first prove a new weak comparison principle for quasilinear equations in half-spaces and then we exploit it to start the moving plane procedure {at infinity in the $y$-direction. Then, by a delicate analysis based on the the use of the techniques developed in \cite{DSCalcVar, DSJDE} and \cite{FMS, FMS3, FMSR}, the translation invariance of the considered problem and the method introduced in \cite{Far99}, we obtain} the monotonicity of the solution in all the directions of the  the upper hemi-sphere $\S^{N-1}_+:=\{\nu \in \S^{N-1}_+ \ | \ (\nu, e_N) \}$.  {This result, in turn, will provide the desired one-dimensional symmetry result as well as the strict monotonicity.}

The paper is organized as follows: In Section \ref{sec: StrongMaxCompPrin} we recall {the definition of weak solution of \eqref{equation1}, as well as some results about the strong maximum principle and the comparison principles for nonlinear equations involving the $p$-Laplace operator.} In Section \ref{sec: pre} we prove a new weak comparison principle in half-spaces. In Section \ref{sec: moving} we prove the monotonicity of the solution in the $y$-direction, exploiting the moving plane procedure. In Section \ref{sec: 1sim} we prove the the one-dimensional symmetry and the strict monotonicity of the solution. 
	


\section{Strong maximum principles and strong comparison principles for quasilinear elliptic equations} \label{sec: StrongMaxCompPrin}
 {The aim of this section is to recall some results about the strong comparison principles and the strong maximum principles for quasilinear elliptic equations that will be used several times in the proof of our main theorem. To this end we first recall the definiton of weak solution for the quasilinear equation $-\Delta_p u = f(u)$.  }

	\begin{defn}\label{weakform}
		Let $\Omega$ be an open set of $\R^N$, $N\geq 1$. We say that $u \in  C^1(\Omega)$ is a
		\emph{weak subsolution} to
		\begin{equation}\label{eq:probleadjunt}
		-\Delta_p u = f(u) \quad \text{in} \quad\Omega
		\end{equation}
		if
		\begin{equation}\label{problem1weaksubsol}
		\int_{\Omega} |\nabla u|^{p-2}(\nabla u, \nabla \varphi) \, dx \, \leq
		\int_{\Omega} f(u) \varphi \, dx \qquad \forall \varphi \in
		C_c^\infty(\Omega), \, \varphi \geq 0.
		\end{equation}
		Similarly, we say that $u \in C^1(\Omega)$ is a
		\emph{weak supersolution} to \eqref{eq:probleadjunt} if
		\begin{equation}\label{problem1weaksupersol}
		\int_{\Omega} |\nabla u|^{p-2} (\nabla u, \nabla \varphi) \, dx \,
		\geq \int_{\Omega} f(u) \varphi \, dx \qquad \forall \varphi \in
		C_c^\infty(\Omega), \, \varphi \geq 0.
		\end{equation}
		Finally, we say that $u \in  C^1(\Omega)$ is a
		weak \emph{solution} of equation \eqref{eq:probleadjunt}, if
		\eqref{problem1weaksubsol} and \eqref{problem1weaksupersol}  hold.
		 {Sometimes for brevity,  we shall use the term "solution" 
		to indicate a weak solution to the considered problem. }
	\end{defn}


The first result that we are going to present is the classical strong maximum principle due to J. L. Vazquez \cite{Vaz} (see also P. Pucci and J. Serrin book \cite{pucser}) 

\begin{thm}[Strong Maximum Principle and H\"opf's Lemma, \cite{pucser, Vaz}]\label{StrongMaximumPrinciple}
Let  $u \in C^1(\Omega)$ be a non-negative weak solution to
$$-\Delta_p u + c u^q = g \geq 0 \quad \text{in} \quad \Omega$$
with $1<p<+\infty$, $q\geq p-1$, $c \geq 0$ and $g \in L^\infty_{loc} (\Omega)$. If $u \neq 0$, then $u>0$ in $\Omega$.
Moreover for any point $x_0 \in \partial \Omega$ where the interior sphere condition is satisfied, and such that $u \in C^1(\Omega) \cup \{x_0\}$  and $u(x_0) = 0$ we have that $\partial_\nu u>0$ for any inward directional derivative (this means that if $y$ approaches $x_0$ in a ball $B \subseteq \Omega$ that has $x_0$ on its boundary, then $\lim_{y \rightarrow x_0} \frac{u(y)-u(x_0)}{|y-x_0|}>0$).
\end{thm}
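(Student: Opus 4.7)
The plan is to derive both assertions from a single radial barrier construction. For the strong maximum principle I would argue by contradiction: assume $u\not\equiv 0$ in $\Omega$ yet the set $Z:=\{x\in\Omega : u(x)=0\}$ is non-empty. Then $Z$ is closed and $P:=\Omega\setminus Z$ is a non-empty open set, so by a standard topological selection there exist $\tilde x\in P$ and $r>0$ with $B_r(\tilde x)\subset P$ and a point $x_0\in\partial B_r(\tilde x)\cap Z$. Since $u\in C^1(\Omega)$ and $x_0$ is a global minimum of $u$, necessarily $\nabla u(x_0)=0$. The strategy is then to construct on the annulus $A:=B_r(\tilde x)\setminus\overline{B_{r/2}(\tilde x)}$ a radial function $v$ satisfying $-\Delta_p v + c v^q \le 0$, $v>0$ in $A$, $v=0$ on $\partial B_r(\tilde x)$ with $\partial_\nu v(x_0)<0$ (outward normal), and $v\le \min_{\partial B_{r/2}(\tilde x)} u$ on $\partial B_{r/2}(\tilde x)$. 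Since $u$ is a weak supersolution of $-\Delta_p w + cw^q=0$ on $A$, comparing $u$ and $v$ gives $u\ge v$ in $A$, and evaluating the inward normal derivative at $x_0$ yields a strictly positive lower bound, contradicting $\nabla u(x_0)=0$.

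The analytic heart of the proof is the construction of $v$. Writing $v(x)=\psi(|x-\tilde x|)$, the problem reduces to the ODE inequality
\[
-(|\psi'|^{p-2}\psi')'(\rho) - \frac{N-1}{\rho}\,|\psi'|^{p-2}\psi'(\rho) + c\,\psi(\rho)^q \le 0, \qquad \rho\in[r/2,r],
\]
with $\psi(r)=0$ and $\psi'(r)<0$. One can use either V\'azquez's explicit ODE construction, in which $\psi$ is obtained by inverting a suitable quadrature, or the classical ansatz $\psi(\rho)=\varepsilon(e^{-\alpha\rho^2}-e^{-\alpha r^2})$ with $\varepsilon$ small and $\alpha$ large. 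The crucial hypothesis is $q\ge p-1$: equivalently, $\int_{0^+} t^{-(q+1)/p}\,dt=+\infty$, which is V\'azquez's sharp integrability condition. Above this threshold the reaction $c\psi^q$ vanishes fast enough as $\psi\to 0^+$ to be absorbed by the diffusive terms, so a genuine subsolution with $\psi'(r)<0$ exists; at or below it one can in fact build non-trivial compactly supported non-negative solutions and the strong maximum principle fails.

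The Hopf boundary lemma follows by a parallel argument. At a boundary point $x_0\in\partial\Omega$ satisfying the interior sphere condition, select an interior ball $B_r(\tilde x)\subset\Omega$ tangent to $\partial\Omega$ at $x_0$ and build the same barrier on $B_r(\tilde x)\setminus\overline{B_{r/2}(\tilde x)}$; the comparison $u\ge v$ on this annulus yields the stated strict inward directional derivative estimate at $x_0$. The main technical obstacle, common to both parts, is the degeneracy of $-\Delta_p$ where the gradient of the barrier vanishes. This is precisely why the comparison is carried out on an annulus rather than on the full ball: on $\overline{A}$ one has $|\nabla v|>0$, the operator $-\Delta_p$ is uniformly elliptic there, and the inequality $u\ge v$ follows from a standard weak comparison argument. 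The inner sphere isolates the construction from the center $\tilde x$, where $\psi'(0)=0$ and the $p$-Laplacian degenerates---setting this up rigorously, and verifying that the ODE yields a true weak subsolution up to the outer sphere, is the most delicate step.
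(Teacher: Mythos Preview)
The paper does not supply its own proof of this theorem: it is stated as a classical result and attributed to V\'azquez \cite{Vaz} and Pucci--Serrin \cite{pucser}, with no argument given. Your proposal is a correct outline of precisely the approach found in those references---the radial barrier on an annulus, the ODE reduction, the role of the condition $q\ge p-1$ as V\'azquez's divergent-integral criterion, and the observation that working on an annulus avoids the degeneracy of $-\Delta_p$ at the center. So there is nothing to compare against in the paper itself, and your sketch matches the cited sources.

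One small remark on wording: when you write ``above this threshold \ldots at or below it,'' the intended dichotomy is $q\ge p-1$ (strong maximum principle holds) versus $q<p-1$ (it can fail, with compactly supported dead-core solutions). The boundary case $q=p-1$ belongs to the ``good'' side, so ``at or above'' versus ``below'' would be cleaner. This is cosmetic; the mathematics is right.
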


It is very simple to guess that in the quasilinear case, maximum and comparison principles are not equivalent; for this reason we need also to recall the classical version of the strong comparison principle for quasilinear elliptic equations 
\begin{thm}[Classical Strong Comparison Principle, \cite{Damascelli, pucser}] \label{classicalSCP}

\noindent Let $u,v \in C^1(\Omega)$ be two solutions to 
\begin{equation} \label{probSMCP}
-\Delta_p w  = f(w) \qquad  \text{in} \quad \Omega
\end{equation}
such that $u \leq v$ in $\Omega$, with $1<p<+\infty$ and let $\mathcal{Z} =\{x \in \Omega \ | \ |\nabla u (x)| + |\nabla v (x)|  =  0\}$.	If $x_0 \in \Omega \setminus \mathcal{Z} $ and $u(x_0)=v(x_0)$, then $u=v$ in the connected component of $\Omega \setminus \mathcal{Z}$ containing $x_0$.
\end{thm}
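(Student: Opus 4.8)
The plan is to linearise the equation along the segment joining the two solutions, so that away from $\mathcal Z$ the difference $w:=v-u\ge0$ solves a \emph{linear}, uniformly elliptic equation in divergence form with bounded measurable coefficients, and then to invoke the strong maximum principle for such operators together with a connectedness argument. For $t\in[0,1]$ set $u_t:=(1-t)u+tv\in C^1(\Omega)$, so $\nabla u_t=(1-t)\nabla u+t\nabla v$. For each fixed $x$ the map $t\mapsto|\nabla u_t(x)|^{p-2}\nabla u_t(x)$ is absolutely continuous on $[0,1]$ (here $p>1$ is used, since it makes $s\mapsto s^{p-2}$ integrable at $0$), with a.e.\ derivative $\big(|\nabla u_t|^{p-2}\mathrm{Id}+(p-2)|\nabla u_t|^{p-4}\nabla u_t\otimes\nabla u_t\big)\nabla w$; integrating in $t$ gives the pointwise identity
\[
|\nabla v|^{p-2}\nabla v-|\nabla u|^{p-2}\nabla u=A(x)\nabla w,\qquad A(x):=\int_0^1\Big(|\nabla u_t|^{p-2}\mathrm{Id}+(p-2)|\nabla u_t|^{p-4}\nabla u_t\otimes\nabla u_t\Big)\,dt,
\]
the integrand being understood as $0$ where $\nabla u_t=0$. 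Subtracting the two copies of \eqref{probSMCP} and testing with $\varphi\in C_c^\infty(\Omega)$, the function $w\in C^1(\Omega)\subset H^1_{\mathrm{loc}}(\Omega)$ is a weak solution of
\[
-\operatorname{div}\!\big(A(x)\nabla w\big)=f(v)-f(u)=:c(x)\,w\qquad\text{in }\Omega,
\]
where $c(x):=\big(f(v(x))-f(u(x))\big)/\big(v(x)-u(x)\big)$ if $u(x)\ne v(x)$ and $c(x):=0$ otherwise; since $f$ is locally Lipschitz and $|u|,|v|\le1$, we have $c\in L^\infty_{\mathrm{loc}}(\Omega)$.

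\emph{Uniform ellipticity off $\mathcal Z$.} Fix $x_1\in\Omega\setminus\mathcal Z$ and a ball $B\Subset\Omega\setminus\mathcal Z$ centred at $x_1$, so that $0<m\le|\nabla u|+|\nabla v|\le M<\infty$ on $\overline B$. For $\eta\ne0$ and any $\xi\in\R^N$ one has
\[
\big(|\eta|^{p-2}\mathrm{Id}+(p-2)|\eta|^{p-4}\eta\otimes\eta\big)\xi\cdot\xi\in\big[\min\{1,p-1\}\,|\eta|^{p-2}|\xi|^2,\ \max\{1,p-1\}\,|\eta|^{p-2}|\xi|^2\big],
\]
because $(\eta\cdot\xi)^2\le|\eta|^2|\xi|^2$. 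Hence $\min\{1,p-1\}\,\Lambda(x)|\xi|^2\le A(x)\xi\cdot\xi\le\max\{1,p-1\}\,\Lambda(x)|\xi|^2$ with $\Lambda(x):=\int_0^1|\nabla u_t(x)|^{p-2}\,dt$. Writing $|\nabla u_t(x)|^2$ as a nonnegative quadratic polynomial in $t$ and using $p>1$ together with $m\le|\nabla u|+|\nabla v|\le M$ on $\overline B$, an elementary estimate gives $0<\lambda_0\le\Lambda(x)\le\Lambda_0<\infty$ for all $x\in B$. Thus $A$ is a bounded, measurable, uniformly elliptic matrix field on $B$, and $w\ge0$ is a weak solution of $-\operatorname{div}(A\nabla w)=c(x)w$ on $B$ with $A,c\in L^\infty(B)$.

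\emph{Strong maximum principle and conclusion.} Since $w\ge0$, rewriting the equation as $-\operatorname{div}(A\nabla w)+c^-(x)w=c^+(x)w\ge0$ exhibits $w$ as a nonnegative supersolution of a uniformly elliptic divergence-form operator with bounded coefficients whose zero-order term has the good sign; by the weak Harnack inequality for such operators --- equivalently, the strong maximum principle, see e.g.\ \cite{pucser} --- either $w>0$ throughout $B$, or $w\equiv0$ on $B$. It follows that the set $\{x\in\Omega\setminus\mathcal Z:\ u(x)=v(x)\}$ is open in $\Omega\setminus\mathcal Z$; it is also relatively closed there, $w$ being continuous. As it contains $x_0$, it contains the whole connected component of $\Omega\setminus\mathcal Z$ through $x_0$, which proves the claim.

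The only genuinely delicate point is the \emph{uniform} ellipticity in the second step: one must bound $\Lambda(x)=\int_0^1|\nabla u_t(x)|^{p-2}\,dt$ from above and below on $B$, and this is exactly where $p>1$ (so that $s\mapsto s^{p-2}$ is integrable near $0$) and the hypothesis $x\notin\mathcal Z$ are essential, and where the quasilinear situation genuinely departs from the linear one $p=2$. The ordering $u\le v$ enters only to allow the maximum principle to be applied to the signed function $w=v-u$.
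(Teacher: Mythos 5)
Your proof is correct, and it is essentially the standard argument given in the references the paper cites for this statement (Damascelli, and Pucci--Serrin); the paper itself gives no proof of Theorem~\ref{classicalSCP}, relegating it to \cite{Damascelli,pucser}. The key mechanism you identify --- writing the difference of the fluxes as $A(x)\nabla w$ with $A(x)=\int_0^1\bigl(|\nabla u_t|^{p-2}\mathrm{Id}+(p-2)|\nabla u_t|^{p-4}\nabla u_t\otimes\nabla u_t\bigr)dt$, using $p>1$ to make the $t$-integral converge even when the segment $(1-t)\nabla u(x)+t\nabla v(x)$ passes through the origin, and then using $x\notin\mathcal Z$ to get uniform ellipticity of $A$ on a fixed ball so that the classical weak Harnack inequality applies to the nonnegative supersolution $w=v-u$ --- is exactly the one used there. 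Two very small points worth tightening if you write it out in full: the inequality $|u|,|v|\le 1$ is not part of the hypotheses of this theorem (it is a feature of the Gibbons' setting), so argue $c\in L^\infty_{\rm loc}$ simply from $f\in C^1$ and the local boundedness of $u,v\in C^1(\Omega)$; and the claim $0<\lambda_0\le\Lambda(x)\le\Lambda_0$ on $B$, while true, deserves the one-line elementary proof you allude to (for the upper bound when $1<p<2$, observe that either $\mathrm{dist}(0,[\nabla u,\nabla v])\ge m/4$ or else $|\nabla v-\nabla u|\ge m/4$, and in either case $\int_0^1|\nabla u_t|^{p-2}dt\le C(m,p)$; for the lower bound when $p\ge2$, note $|\nabla u_t|^2$ deviates from $\max(|\nabla u|^2,|\nabla v|^2)\ge m^2/4$ by at most $O(M^2 t)$ near the appropriate endpoint).
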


 {For the proof of this result we suggest \cite{Damascelli}. } The main feature of Theorem \ref{classicalSCP} is that it holds far from the critical set. Now we present a result which holds true, under stronger assumptions,  {on the entire domain $\Omega$.}   

\begin{thm}[Strong Comparison Principle, \cite{DSCalcVar}]\label{SCPLucioeDino}
	Let $u,v\in C^1(\overline{\Omega})$ be two solutions to \eqref{probSMCP}, where $\Omega$ is a bounded  domain 
	of $\mathbb{R}^N$ and $\frac{2N+2}{N+2}<p<+\infty$.
Assume that at least one of the following two conditions \textbf{$(f_u)$},\textbf{$(f_v)$} holds:
\begin{itemize}
	\item[\textbf{($f_u$):}] either
	\begin{equation}\label{fuasspos}
	f(u(x)) >0 \quad\mbox{in}\quad\overline{\Omega}
	\end{equation}
	or
	\begin{equation}\label{fuassneg}
	f(u(x)) <0 \quad\mbox{in}\quad\overline{\Omega};
	\end{equation}

	\item[\textbf{($f_v$):}] either
	\begin{equation}\label{fvasspos}
	f(v(x)) >0 \quad\mbox{in}\quad\overline{\Omega}
	\end{equation}
	or
	\begin{equation}\label{fvassneg}
	f(v(x)) <0 \quad\mbox{in}\quad\overline{\Omega}.
	\end{equation}
\end{itemize}
 {Suppose furthermore that}
\begin{equation}\label{hthCOMP:LAMB22II}
u\leq v\quad\mbox{in}\quad\Omega.
\end{equation}
Then $u\equiv v$ in $\Omega$ unless
\begin{equation}
u<v\quad \mbox{in}\quad\Omega.
\end{equation}
\end{thm}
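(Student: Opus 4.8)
The plan is to linearize the equation satisfied by the difference $w:=v-u\geq 0$, obtaining a linear divergence-form equation for $w$ whose (possibly degenerate) ellipticity is governed by a weight built out of $|\nabla u|,|\nabla v|$, and then to run a weighted Harnack inequality; the range $\frac{2N+2}{N+2}<p$ and the sign hypothesis on $f$ will enter exactly where one needs that weight to be of Muckenhoupt type. First I would reduce to a single case: since $-\Delta_p$ is an odd operator and $\Omega$ is connected, it suffices to assume that $(f_v)$ holds with $f(v)>0$ on $\overline\Omega$ --- the case of $(f_u)$ being symmetric (interchanging the roles of $u$ and $v$ below), and the sign-reversed alternatives reducing to these after replacing $(u,v,f)$ by $(-v,-u,\tilde f)$ with $\tilde f(t):=-f(-t)$. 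With this reduction, set $w:=v-u\geq 0$ in $\Omega$ and $Z:=\{x\in\Omega\ :\ w(x)=0\}$. Since $w$ is continuous, $Z$ is relatively closed in $\Omega$; hence, by connectedness, it is enough to show that $Z$ is also open, for then either $Z=\emptyset$ (which is the stated alternative $u<v$ in $\Omega$) or $Z=\Omega$ (that is, $u\equiv v$). So I fix $x_0\in Z$ and a ball $B=B_r(x_0)$ with $\overline B\subset\Omega$, and aim at proving $w\equiv 0$ in a neighbourhood of $x_0$.

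On $B$, subtracting the two equations and using $f\in C^1$, one checks that $w$ is a weak solution of a linear equation in divergence form
\[
-\operatorname{div}\big(\mathcal A(x)\,\nabla w\big)=c(x)\,w\qquad\text{in }B,
\]
where $c(x):=\int_0^1 f'\big(u+t\,w\big)\,dt\in L^\infty(B)$ and, with $z_t:=(1-t)u+t\,v$,
\[
\mathcal A(x):=\int_0^1\Big(|\nabla z_t|^{p-2}\,\mathrm{Id}+(p-2)\,|\nabla z_t|^{p-4}\,\nabla z_t\otimes\nabla z_t\Big)\,dt .
\]
The matrix $\mathcal A$ is symmetric and satisfies $c_1\,\omega(x)\,|\xi|^2\leq(\mathcal A(x)\xi,\xi)\leq c_2\,\omega(x)\,|\xi|^2$ for every $\xi\in\R^N$, with $\omega(x):=\big(|\nabla u(x)|+|\nabla v(x)|\big)^{p-2}$ and $c_1,c_2>0$ depending only on $p$. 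The weight $\omega$ may degenerate or blow up along the critical set $\mathcal Z$, and this is precisely the obstruction that confines Theorem~\ref{classicalSCP} to $\Omega\setminus\mathcal Z$. Since $w\geq 0$, the equation gives $-\operatorname{div}(\mathcal A\nabla w)+\|c\|_{L^\infty(B)}\,w\geq 0$ in the weak sense, so $w$ is a nonnegative supersolution of a linear elliptic operator with weight $\omega$ and a nonnegative zeroth-order coefficient (the ``good'' sign for the minimum principle).

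The decisive step is to make Moser's iteration available for this weighted operator, that is, to verify that $\omega$ belongs to the Muckenhoupt class $A_2(B)$ (possibly after shrinking $r$), which grants the weighted Sobolev inequality and the De Giorgi--Nash--Moser machinery in the sense of Fabes--Kenig--Serapioni. Here the hypotheses are used in full: since $f(v)$ has constant sign on the compact set $\overline\Omega$, one has $-\Delta_p v\geq\delta>0$ on $B$, and the fine regularity and summability estimates of Damascelli--Sciunzi \cite{DSJDE, DSCalcVar} then give $|\nabla v|^{-1}\in L^{t}(B)$ for an exponent $t=t(N,p)$ which --- precisely because $\frac{2N+2}{N+2}<p$ --- is large enough to imply, together with the $C^1$ bounds on $u$ and $v$, that $\omega\in A_2(B)$. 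Granted this, the weak Harnack inequality for nonnegative supersolutions yields, for some $s>0$ and a small ball $B_\rho(x_0)\subset B$,
\[
0\ \leq\ \Big(\frac{1}{|B_\rho(x_0)|}\int_{B_\rho(x_0)} w^{s}\,\omega\,dx\Big)^{1/s}\ \leq\ C\,\inf_{B_{\rho/2}(x_0)}w\ =\ C\,w(x_0)\ =\ 0 ,
\]
so $w\equiv 0$ on $B_{\rho/2}(x_0)$, showing that $Z$ is open and completing the argument. I expect the verification of the $A_2$ condition for $\omega$ to be the main obstacle: it is exactly what forces the range $\frac{2N+2}{N+2}<p$ and what genuinely exploits the sign of $f(u)$ or of $f(v)$. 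Without such a sign hypothesis the weight cannot be controlled near $\mathcal Z$, and one is left only with the classical comparison of Theorem~\ref{classicalSCP}, valid off the critical set.
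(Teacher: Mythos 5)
Your plan is in the right spirit and close to the paper's: linearize the difference $w=v-u$ into a degenerate linear divergence-form equation with weight $\omega=(|\nabla u|+|\nabla v|)^{p-2}$, use the sign condition on $f$ to control the degeneracy of $\omega$ through the summability of $|\nabla u|^{-1}$ or $|\nabla v|^{-1}$, run a weighted weak Harnack inequality on $w\geq 0$, and conclude by the open-closed argument. The reduction to the case $f(v)>0$ by oddness of $-\Delta_p$ is also correct. Where you depart from the paper -- and where there is a genuine gap -- is the claim that the $L^t$ integrability of $|\nabla v|^{-1}$ coming from \cite{SciunziCCM} forces $\omega\in A_2(B)$, so that the Fabes--Kenig--Serapioni machinery becomes available.

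That implication does not hold as stated and is not what the cited results provide. The Muckenhoupt $A_2$ condition is a scale-invariant statement requiring
\[
\sup_{B'\subset B}\Big(\fint_{B'}\omega\Big)\Big(\fint_{B'}\omega^{-1}\Big)<\infty,
\]
uniformly over all sub-balls, and a single global $L^t$ bound on $\omega$ (or on $\omega^{-1}$, depending on the sign of $p-2$) is much weaker: it gives no control on the local averages at small scales, so $A_2$ can fail even when $\omega\in L^t$ for large $t$. The Damascelli--Sciunzi framework is designed precisely to avoid this obstruction. In \cite{FMS3} (Theorem 8) a weighted Sobolev inequality of the form $\|\varphi\|_{L^{q}_\rho}\leq C\|\nabla\varphi\|_{L^2_\rho}$ is derived directly from the $L^t$ integrability of $\rho^{-1}$ by H\"older's inequality combined with the ordinary Sobolev inequality, with no appeal to Muckenhoupt weights; the exponent range $p>\tfrac{2N+2}{N+2}$ is exactly what makes that H\"older/Sobolev interplay close. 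Then \cite{DSCalcVar} (Corollary 3.2) establishes the weak Harnack comparison inequality from that weighted Sobolev inequality by Moser iteration, and the Strong Comparison Principle follows as in \cite{DSCalcVar}, Theorem 1.4. So the correct fix is to replace the $A_2$/FKS step with this $L^t$-based weighted Sobolev inequality and the Harnack estimate it supports; the rest of your outline (linearization, sign reduction, openness of $\{w=0\}$) then goes through and matches the paper's argument.
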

\begin{proof}
The proof of this result follows by the same arguments in \cite{DSCalcVar, FMS3, SciunziNodea, SciunziCCM}. Note in fact that under the assumption \textbf{($f_u$)} or \textbf{($f_v$)}, it follows that $|\nabla u|^{-1}$ or $|\nabla v|^{-1}$ has the summability properties exposed by Theorem 3.1 in \cite{SciunziCCM}. Then the weighted Sobolev inequality is in force, see e.g. Theorem 8 in \cite{FMS3}.

Now, it is sufficient to note that the Harnack comparison inequality given by Corollary 3.2 in \cite{DSCalcVar} holds true, since the proof it is only based on the weighted Sobolev inequality.

Finally it is standard to see that the Strong Comparison Principle follows by the weak comparison Harnack inequality (that it is based on the Moser-iteration scheme \cite{moser, trudinger}), see Theorem 1.4 in \cite{DSCalcVar}.
\end{proof}	

\smallskip

Let us now recall that the linearized operator at a fixed solution $w$ of \eqref{probSMCP}, $L_w(v,\varphi)$, is well defined, for every $v$ and $\varphi$ in the weighted Sobolev space $H^{1,2}_\rho(\Omega)$ with $\rho=|\nabla w|^{p-2}$ by
\begin{equation}\label{eq:linearizzatoSCP}
\begin{split}
L_w(v, \varphi) \equiv \int_\Omega |\nabla w|^{p-2}(\nabla v, \nabla \varphi)+(p-2) |\nabla w|^{p-4}(\nabla w, \nabla v)(\nabla w, \nabla \varphi) - f'(w)v \varphi \, dx .
\end{split}
\end{equation}
Moreover $v \in H^{1,2}_\rho(\Omega)$ is a weak solution of the linearized operator if
\begin{equation} \label{linearizedequation}
L_w(v, \varphi)=0 \qquad {\forall \varphi \in H^{1,2}_{0,\rho}(\Omega)}.
\end{equation}

For future use we recall that, as it follows by the regularity results in \cite{DSJDE, SciunziNodea, SciunziCCM}, the directional derivatives of the solution $\partial_{\eta} u$ ($\eta \in \S^{N-1}$) belong to the weighted Sobolev space $H^{1,2}_{\rho}(\Omega)$ and fulfils \eqref{linearizedequation}.

In particular here below we recall two versions of the strong maximum principle for the linearized equation \eqref{linearizedequation} that we shall use in our proofs. The first result holds far from the critical set:

\begin{thm}[Classical Strong Maximum Principle for the Linearized Operator, \cite{pucser}] \label{ClassicSMPlinearized}
Let $u\in C^1(\overline{\Omega})$ be a solution to problem \eqref{probSMCP}, with $1<p<+\infty$. Let $\eta \in \S^{N-1}$ and let us assume that for any connected domain $\Omega ' \subset \Omega \, \setminus $  {$ \{x \in \Omega \ | \ |\nabla u (x)|  =  0\}$. }
\begin{equation}\label{linass}
\partial_\eta u\geq 0 \quad  \text{in} \quad \Omega'.
\end{equation}
Then $\partial_\eta u \equiv 0$ in $\Omega '$ unless
\begin{equation}\label{linthesis}
\partial_\eta u> 0 \quad  \text{in} \quad \Omega'.
\end{equation}
\end{thm}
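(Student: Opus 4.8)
The plan is to reduce the statement, locally on balls contained in $\Omega'$, to the classical strong maximum principle for linear uniformly elliptic operators in divergence form. Fix a connected open set $\Omega'\subset\Omega\setminus\{x\in\Omega: |\nabla u(x)|=0\}$ on which $\partial_\eta u\ge 0$, and set $v:=\partial_\eta u$. As recalled just above the statement, $v$ lies in $H^{1,2}_{\rho,\mathrm{loc}}(\Omega')$ and is a weak solution of the linearized equation, i.e. $L_u(v,\varphi)=0$ for all $\varphi\in C_c^\infty(\Omega')$, with $L_u$ as in \eqref{eq:linearizzatoSCP}. On $\Omega'$ the weight $\rho=|\nabla u|^{p-2}$ is continuous and strictly positive, hence bounded between two positive constants on every ball $B\Subset\Omega'$; consequently the symmetric matrix field
\[
A(x):=|\nabla u(x)|^{p-2}\,\mathrm{Id}+(p-2)\,|\nabla u(x)|^{p-4}\,\nabla u(x)\otimes\nabla u(x)
\]
is uniformly elliptic on $B$ with bounded entries, while $f'(u)\in L^\infty(B)$. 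Since $\rho\ge c_B>0$ on $B$ forces $\int_B|\nabla v|^2<\infty$ and $v$ is continuous (being a derivative of the $C^1$ function $u$), we get $v\in W^{1,2}(B)$, and $v$ is a nonnegative weak solution there of the linear uniformly elliptic equation $-\mathrm{div}\bigl(A(x)\nabla v\bigr)-f'(u)\,v=0$.

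The next step disposes of the sign of the zeroth–order coefficient by the usual absorption trick: setting $c_B:=\|f'(u)\|_{L^\infty(B)}\ge 0$, the inequality $v\ge 0$ gives
\[
-\mathrm{div}\bigl(A(x)\nabla v\bigr)+c_B\,v=\bigl(c_B+f'(u)\bigr)v\ge 0\qquad\text{in }B,
\]
so that $v$ is a nonnegative supersolution of a uniformly elliptic divergence–form operator with nonnegative zeroth–order term. The classical strong maximum (minimum) principle for such operators (see \cite{pucser}) then yields the local dichotomy: if $v$ vanishes at some point of $B$, then $v\equiv 0$ in $B$.

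It remains to globalize by connectedness. Since $v$ is continuous, the set $E:=\{x\in\Omega': v(x)=0\}$ is relatively closed in $\Omega'$; by the previous step it is also open, because any $x_0\in E$ admits a ball $B\Subset\Omega'$ around it on which $v\equiv 0$, whence $B\subset E$. As $\Omega'$ is connected, either $E=\varnothing$, giving $\partial_\eta u=v>0$ in $\Omega'$, or $E=\Omega'$, giving $\partial_\eta u\equiv 0$ in $\Omega'$; this is exactly the claimed alternative between \eqref{linthesis} and $\partial_\eta u\equiv 0$. I do not expect a substantial obstacle here: the only point requiring a word of care is that $A$ is genuinely uniformly elliptic with bounded entries and that $f'(u)$ is bounded, which is automatic as soon as one stays strictly away from the critical set $\{|\nabla u|=0\}$ where $\rho$ would otherwise degenerate or blow up, the indefinite zeroth–order term being handled by the elementary absorption step above.
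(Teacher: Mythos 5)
Your proof is correct, and it is worth noting that the paper itself does not supply a proof of this statement: it is merely recalled and attributed to \cite{pucser}, so there is no in-text argument to compare against. Your reduction to the linear uniformly elliptic theory away from the critical set is exactly the standard route to this kind of result, and all the steps check out. On a ball $B\Subset\Omega'$ the weight $\rho=|\nabla u|^{p-2}$ is bounded away from $0$ and $\infty$ by continuity of $\nabla u$ and compactness, and the eigenvalues of $A(x)$ are $\rho$ (multiplicity $N-1$) and $(p-1)\rho$ (in the direction $\nabla u$), both strictly positive for $p>1$; so $A$ is indeed uniformly elliptic with bounded measurable (in fact H\"older continuous, by Remark~\ref{rem:1}) coefficients on $B$. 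The passage from $v\in H^{1,2}_{\rho,\mathrm{loc}}$ to $v\in W^{1,2}_{\mathrm{loc}}(\Omega')$ and the fact that $L_u(v,\varphi)=0$ for $\varphi\in C_c^\infty(\Omega')$ are licensed by the discussion following \eqref{eq:linearizzatoSCP}. The absorption step turning $v$ into a nonnegative supersolution of an operator with nonnegative zeroth-order coefficient, the invocation of the strong maximum principle for uniformly elliptic divergence-form equations, and the open-closed argument by connectedness are all standard and properly executed. The only stylistic remark is that since the coefficients are actually H\"older continuous on $B$, one could just as well invoke the smooth-coefficient version of Hopf's strong maximum principle rather than the Moser--Harnack version, but both work.
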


{Next we recall a more general result which holds true on the entire domain $\Omega$.}   

\begin{thm}[Strong Maximum Principle for the Linearized Operator, \cite{DSCalcVar}]  \label{SMPlinearized}
	Let $u\in C^1(\Omega)$ be a solution to problem \eqref{probSMCP}, with $\frac{2N+2}{N+2}<p<+\infty$. Assume that either
\begin{equation}\label{fassposbis}
f(u(x)) >0 \quad\mbox{in}\quad\overline{\Omega}
\end{equation}
or
\begin{equation}\label{fassnegbis}
f(u(x)) <0 \quad\mbox{in}\quad\overline{\Omega}.
\end{equation}
If $\eta \in \S^{N-1}$ and $\partial_\eta u \geq 0$ in $\Omega$, then  either $\partial_\eta u \equiv 0$ in $\Omega$ or $\partial_\eta u>0$ in $\Omega$.
\end{thm}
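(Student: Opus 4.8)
The plan is to rely on the facts recalled immediately before the statement: writing $v:=\partial_\eta u$ and $\rho:=|\nabla u|^{p-2}$, the function $v$ belongs to $H^{1,2}_\rho(\Omega)$ and is a weak solution of the linearized equation, i.e. $L_u(v,\varphi)=0$ for every $\varphi\in H^{1,2}_{0,\rho}(\Omega)$, with $L_u$ as in \eqref{eq:linearizzatoSCP}; moreover $v\in C^0(\Omega)$ since $u\in C^1(\Omega)$. As usual $\Omega$ is a domain, hence connected. Since $v\ge 0$ in $\Omega$ by hypothesis, the set $\mathcal{Z}_0:=\{x\in\Omega \ : \ v(x)=0\}$ is closed in $\Omega$, and the theorem will follow once we show that $\mathcal{Z}_0$ is also open: connectedness then forces $\mathcal{Z}_0=\emptyset$ (that is, $v>0$) or $\mathcal{Z}_0=\Omega$ (that is, $v\equiv0$).

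To prove openness I would proceed exactly as in the proof of Theorem \ref{SCPLucioeDino}. Under \eqref{fassposbis} or \eqref{fassnegbis}, and because $p>\frac{2N+2}{N+2}$, Theorem 3.1 in \cite{SciunziCCM} gives that $|\nabla u|^{-1}\in L^{t}_{loc}(\Omega)$ for a suitable exponent $t$; in particular the critical set $\{\nabla u=0\}$ is Lebesgue-negligible and the weighted Sobolev inequality of Theorem 8 in \cite{FMS3} holds on every ball compactly contained in $\Omega$. This is precisely what is needed to run the Moser iteration for the linear, possibly degenerate/singular, operator $L_u$: following \cite{DSCalcVar} (see also \cite{moser, trudinger, SciunziNodea}) one obtains a weak Harnack inequality for nonnegative weak solutions of $L_u(v,\varphi)=0$, of the form
\[
\Big(\int_{B_r(x_0)}v^{s}\,dx\Big)^{1/s}\ \le\ C\,\inf_{B_r(x_0)}v ,
\]
valid whenever $B_{2r}(x_0)\subset\Omega$, with $s>0$ and $C$ depending only on the local weighted structure.

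Granting this inequality, openness of $\mathcal{Z}_0$ is immediate: if $v(x_0)=0$, pick $r>0$ with $B_{2r}(x_0)\subset\Omega$; since $v\ge0$ we have $\inf_{B_r(x_0)}v=0$, hence $\int_{B_r(x_0)}v^{s}\,dx=0$ and therefore $v\equiv0$ on $B_r(x_0)$. Thus $\mathcal{Z}_0$ is open, and the dichotomy in the statement follows from the connectedness argument above.

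The only genuinely delicate point is the weak Harnack inequality for $L_u$. One has to check that, after using $f(u)\ne0$ on $\overline{\Omega}$ to control $|\nabla u|^{-1}$, the first- and zero-order coefficients appearing in \eqref{eq:linearizzatoSCP} are dominated by the weight $\rho$ in a way compatible with the weighted Sobolev inequality, so that the Moser scheme of \cite{DSCalcVar} applies line by line; this, rather than the routine topological argument, is where the restriction on $p$ and the sign of $f$ are really used. Since all of these ingredients are already established in \cite{DSCalcVar, SciunziCCM, FMS3, SciunziNodea}, the proof amounts to assembling them as in Theorem \ref{SCPLucioeDino}; in fact the statement can be extracted directly from the arguments of \cite{DSCalcVar}.
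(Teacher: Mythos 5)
The paper itself does not give a proof of this theorem: it is recalled from \cite{DSCalcVar}, and the only in-text indication of the method is the sketch given for the analogous strong comparison principle (Theorem \ref{SCPLucioeDino}), which cites exactly the same chain — summability of $|\nabla u|^{-1}$ from \cite{SciunziCCM} under the sign condition on $f(u)$, the weighted Sobolev inequality from \cite{FMS3}, and the Moser-type Harnack argument of \cite{DSCalcVar}. Your sketch reproduces that chain for $v=\partial_\eta u$ solving the linearized equation and then closes with the standard open-closed argument, so it is correct and follows essentially the same route the paper relies on.
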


\

{We conclude this section by the following}

\begin{rem}\label{rem:1}
	{We want to point out the following properties satisfied by any weak solution to \eqref{equation1} such that $ \vert u \vert \leq 1$ on $\R^N$. They will be used several times throughout the paper.
	\begin{itemize}
		\item By the strong maximum principle \cite{Vaz}, see also Theorem \ref{StrongMaximumPrinciple}, we deduce that: either $|u|<1$ on $\R^N$ or $ u\equiv\pm 1$ on $\R^N$.  
	    \item By classical regularity results \cite{DB,T} and since $\|f(u)\|_{L^{\infty}(\mathbb R^N)} \leq \Vert f \Vert_{L^{\infty}([-1,1])}$, 
	    we deduce that : given $R \in (0,1)$ there exist $\alpha \in (0,1)$ and $C>0$, depending only on $N$, $p$, $R$ and 
	    $\Vert f \Vert_{L^{\infty}([-1,1])}$, so that 
	    \[ 
	    \|\nabla u\|_{L^{\infty}(\mathbb R^N)}\leq C,
	    \]
	    \[
	    \vert \nabla u(x) - \nabla u(y) \vert \leq C \Big( \frac{\vert x-y\vert}{R}\Big)^{\alpha},
	    \]
	    for every $x_0\in \R^N$ and any $x,y \in B_R(x_0).$ 
	    In particular, $u \in C^{1,\alpha}_{loc}(\R^N).$
	    \end{itemize} }
\end{rem}

\section{Preliminary results}\label{sec: pre}

{In this section we shall denote by $\Sigma$ any (affine) open half-space of $ \R^N$ of the form 
\[\Sigma:=  \mathbb{R}^{N-1}\times (a,b),\]
where either $a=-\infty$ and $b\in \mathbb R$, or  $a\in \mathbb R$ and $b=+\infty$.}

{We also recall some known inequalities which will be used in this section.} For any $\eta, \eta' \in \mathbb{R}^N$ with $|\eta|+|\eta'|>0$ there
exists positive constants $C_1, C_2, C_3$ {depending only on} $p$ such that
\begin{equation}\label{eq:inequalities}
\begin{split}
[|\eta|^{p-2}\eta-|\eta'|^{p-2}\eta'][\eta- \eta'] &\geq C_1
(|\eta|+|\eta'|)^{p-2}|\eta-\eta'|^2, \\ \\
\|\eta|^{p-2}\eta-|\eta'|^{p-2}\eta '| & \leq C_2
(|\eta|+|\eta'|)^{p-2}|\eta-\eta '|,\\\\
\|\eta|^{p-2}\eta-|\eta'|^{p-2}\eta '| & \leq C_3 |\eta-\eta
'|^{p-1} \quad\mbox{if}\quad 1 < p \leq 2.
\end{split}
\end{equation}

\medskip

The first result that we need is a weak comparison principle between a subsolution and a supersolution to \eqref{equation1} ordered on the boundary of some open half-space $\Sigma$ of $\R^N$. We prove the following
\begin{prop}\label{weakcomparisonprinciple}
{Assume $N>1, \, p>1$ and $f\in C^1(\mathbb R)$. Let $u, v \in C^{1, \alpha}_{loc}(\overline{\Sigma})$ such that $|\nabla u|, |\nabla v| \in L^\infty ({\Sigma})$} and 
\begin{equation}\label{supersolprob}
\begin{cases} \displaystyle
-\Delta_p u \leq f(u) & \text{in}\,\, \Sigma,\\
\displaystyle-\Delta_p v  \geq f(v) & \text{in}\,\, \Sigma,\\
\displaystyle u \leq v & \text{on}\,\,\partial \Sigma,
\end{cases}
\end{equation} 
{where  $\Sigma$ is the open half-space $\mathbb{R}^{N-1}\times (-\infty,b).$ 
Moreover, let us assume that there are $\delta>0$, sufficiently small, and $L>0$ such that }
\begin{equation}\label{eq:derivatafnegativa}
f'(t)<- L\quad  \text{in} \quad [-1,-1+\delta],
\end{equation}
\begin{equation}\label{eq:vicinoamenouno}
-1 \leq u \leq -1 + \delta \quad  \text{in} \quad \Sigma.
\end{equation}
Then
\begin{equation}\label{weakcomp}
u \leq v \quad \text{in} \,\,\Sigma.
\end{equation}

The same result is true if {$\Sigma = \mathbb{R}^{N-1}\times (a, +\infty)$ and \eqref{eq:derivatafnegativa} and \eqref{eq:vicinoamenouno} are replaced by }
\[
 f'(t)<- L\quad  \text{in} \quad [1-\delta,1] \quad \text{and} \quad  1-\delta \leq v \leq 1 \quad \text{in} \quad  \Sigma. 
 \]
\end{prop}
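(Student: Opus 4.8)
The plan is to run a Moser-type iteration / energy argument on the positive part $w := (u-v)^+$, which vanishes on $\partial\Sigma$ and (a priori) far away from the bulk, exploiting that $f$ is uniformly decreasing near $-1$ on the whole strip. First I would set $w = (u-v)^+$ and, for $R > 0$, pick the usual cutoff $\varphi_R \in C_c^\infty(\R^N)$ with $\varphi_R \equiv 1$ on the ball $B_R$, $\varphi_R \equiv 0$ outside $B_{2R}$, and $|\nabla \varphi_R| \le C/R$. Using $w\,\varphi_R^2$ as a test function in the two differential inequalities of \eqref{supersolprob} (legitimate by density, since $u,v \in C^{1,\alpha}_{loc}$ and have bounded gradients, so $w\varphi_R^2$ is a valid Lipschitz test function with compact support), subtracting, and writing $\Sigma_R := \Sigma \cap B_{2R}$, I get
\[
\int_{\Sigma_R}\big(|\nabla u|^{p-2}\nabla u - |\nabla v|^{p-2}\nabla v,\nabla w\big)\varphi_R^2\,dx
\le \int_{\Sigma_R}\big(f(u)-f(v)\big)\,w\,\varphi_R^2\,dx - 2\int_{\Sigma_R}\big(\cdots,\nabla\varphi_R\big)w\,\varphi_R\,dx.
\]
On the set $\{w>0\} = \{u>v\}$ we have, by \eqref{eq:vicinoamenouno}, that both $u$ and $v$ lie in $[-1,-1+\delta]$ (indeed $-1 \le v < u \le -1+\delta$ there), so by \eqref{eq:derivatafnegativa} and the mean value theorem $f(u)-f(v) = f'(\xi)(u-v) \le -L\,w$ pointwise on $\{w>0\}$; hence the first term on the right is bounded above by $-L\int_{\Sigma_R} w^2\varphi_R^2\,dx$.

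Next I would estimate the two remaining terms using the algebraic inequalities \eqref{eq:inequalities}. For the left-hand side, on $\{w>0\}$ one has $\nabla w = \nabla u - \nabla v$, so the first inequality in \eqref{eq:inequalities} gives a lower bound $C_1\int_{\Sigma_R}(|\nabla u|+|\nabla v|)^{p-2}|\nabla w|^2\varphi_R^2\,dx$; since $p<2$ and gradients are bounded by some $M$ on $\Sigma$, $(|\nabla u|+|\nabla v|)^{p-2} \ge (2M)^{p-2} =: c_M > 0$, so the left side dominates $c_M C_1\int_{\Sigma_R}|\nabla w|^2\varphi_R^2\,dx \ge 0$. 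For the cutoff term, the third inequality in \eqref{eq:inequalities} (valid since $1<p\le 2$) combined with Young's inequality yields
\[
2\left|\int_{\Sigma_R}\big(|\nabla u|^{p-2}\nabla u - |\nabla v|^{p-2}\nabla v,\nabla\varphi_R\big)w\,\varphi_R\,dx\right|
\le \varepsilon\int_{\Sigma_R}|\nabla w|^{2}\varphi_R^2\,dx \cdot(\text{bounded gradient factor}) + \frac{C(\varepsilon)}{R^2}\int_{\Sigma_R}w^2\,dx,
\]
where I would be a little more careful: write $|\nabla u|^{p-2}\nabla u - |\nabla v|^{p-2}\nabla v$ and split into the region where $\nabla w$ is small versus large, or simply use the second inequality of \eqref{eq:inequalities} together with $(|\nabla u|+|\nabla v|)^{p-2} \le c_M'$ bounded above — again using $p<2$ — to get $\big||\nabla u|^{p-2}\nabla u - |\nabla v|^{p-2}\nabla v\big| \le C_2 c_M' |\nabla w|$, which makes the Cauchy–Schwarz/Young step routine. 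Choosing $\varepsilon$ small enough to absorb into the left-hand side, I arrive at
\[
L\int_{B_R\cap\Sigma} w^2\,dx \le \frac{C}{R^2}\int_{\Sigma}w^2\,dx.
\]

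The final step is to let $R \to \infty$. Here one needs $w \in L^2(\Sigma)$; this is where the structure is used: $w = (u-v)^+ \le u + 1 \le \delta$ is bounded, but boundedness alone is not enough on an unbounded strip. Instead, since the strip $\Sigma = \R^{N-1}\times(-\infty,b)$ is unbounded in $N-1$ directions, the crude bound $\int_\Sigma w^2 = +\infty$ in general, so I would not pass to the limit this way directly. The correct endgame — and the step I expect to be the main obstacle — is to avoid requiring global $L^2$: keep the inequality $\int_{B_R\cap\Sigma}w^2 \le \frac{C}{R^2}\int_{B_{2R}\cap\Sigma}w^2$ and run the standard iteration (à la the proof of Liouville theorems), noting $\int_{B_{2R}\cap\Sigma}w^2 \le C' R^{N} \delta^2$ by volume, so $\int_{B_R\cap\Sigma}w^2 \le C'' R^{N-2}$; this already forces $w\equiv 0$ when $N \le 2$, but for $N > 2$ one must iterate the estimate finitely or, better, replace the plain cutoff by a logarithmic cutoff $\varphi_R$ with $\int|\nabla\varphi_R|^N \to 0$ (a De Giorgi–type capacity argument), which kills the dimensional factor. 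Alternatively — and this is likely what the paper does — one exploits that $w$ is supported where $u$ is close to $-1$, uses the decay hypothesis $u(x',y)\to -1$ uniformly as $y\to-\infty$ together with $u\le v$ on $\{y=b\}$, and combines a Serrin–Zou / Berestycki–Caffarelli–Nirenberg sliding argument with the comparison: first show the inequality on bounded pieces $\R^{N-1}\times(-R,b)$ with controlled boundary data, then let $R\to\infty$ using the uniform convergence to $-1$ to control the contribution from $y=-R$. In any case, once $\int_{\Sigma}|\nabla w|^2\varphi_R^2 = 0$ and $w\to 0$ appropriately at infinity, $w$ is constant on $\Sigma$ and vanishes on $\partial\Sigma$, hence $w\equiv 0$, i.e. $u\le v$ in $\Sigma$; the case $\Sigma = \R^{N-1}\times(a,+\infty)$ is symmetric, replacing $u$ near $-1$ by $v$ near $+1$ and $(u-v)^+$ by the same, with the sign of $f'$ used on $[1-\delta,1]$. $\blacksquare$
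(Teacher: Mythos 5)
Your overall strategy (energy estimate on $w=(u-v)^+$ with a cutoff, using that $f$ is uniformly decreasing where $u$ lives to generate the good term $-L\int w^2$) is the same as the paper's. However, there are two genuine problems in your execution.

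\textbf{A sign error.} You use both directions of the bound on $(|\nabla u|+|\nabla v|)^{p-2}$: first (correctly) that $(|\nabla u|+|\nabla v|)^{p-2}\ge(2M)^{p-2}>0$ for $p<2$ because the exponent is negative and the base is bounded above, and then (incorrectly) that $(|\nabla u|+|\nabla v|)^{p-2}\le c_M'$ ``again using $p<2$.'' These cannot both hold: for $p<2$ the weight $(|\nabla u|+|\nabla v|)^{p-2}$ is bounded \emph{below} by a positive constant but is \emph{not} bounded above, and in fact blows up wherever $\nabla u=\nabla v=0$. So the route via the second inequality of \eqref{eq:inequalities} fails. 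Your alternative route via the third inequality, $\big||\nabla u|^{p-2}\nabla u-|\nabla v|^{p-2}\nabla v\big|\le C_3|\nabla w|^{p-1}$, is the right one for $p<2$, but then the term $\int|\nabla w|^{p-1}|\nabla\varphi_R|\,w\,\varphi_R$ cannot be closed by a single Young inequality to produce the clean $\varepsilon\int|\nabla w|^2\varphi_R^2+\tfrac{C}{R^2}\int w^2$ form you wrote: the exponents conjugate to $2/(p-1)$ give you $w^{2/(3-p)}$, not $w^2$, and a second Young step with its own $R$-dependent penalty is needed. You do not carry this out.

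\textbf{The endgame.} You correctly identify that $w\notin L^2(\Sigma)$ in general, but your two proposed fixes are not what the paper does and are not fully specified. The paper avoids the problem at the source. For $1<p<2$ it uses the test function $\psi=w^\alpha\varphi_R^{\alpha+1}$ with $\alpha>N-1$; after the Young steps the error term is of size $R^{-(\alpha-(N-1))}\to0$, so one can send $R\to\infty$ directly by Fatou without any integrability of $w$ and without any iteration. For $p\ge2$ it uses $\psi=w\varphi_R^2$ and derives a doubling inequality $\mathcal L(R)\le\vartheta\,\mathcal L(2R)$ for the gradient energy $\mathcal L(R)=\int_{\mathcal C(R)}(|\nabla u|+|\nabla v|)^{p-2}|\nabla w|^2$, with $\vartheta<2^{-N}$ achieved by taking the Young parameter small and $R\ge R_0$ so that the zero-order term has a favourable sign; combined with the trivial volume bound $\mathcal L(R)\le CR^N$, Lemma 2.1 of \cite{FMS} then forces $\mathcal L\equiv0$. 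Neither a logarithmic cutoff nor a Berestycki--Caffarelli--Nirenberg sliding argument is used. Note also that your doubling inequality is stated for $\int_{B_R}w^2$ rather than for the gradient energy, and it is not derived: what the argument actually gives, before any absorption, is a relation between $\int|\nabla w|^2\varphi_R^2$, $\int w^2\varphi_R^2$, and the boundary-layer integral $\int_{B_{2R}\setminus B_R}|\nabla\varphi_R|^{\cdot}\,(\cdot)$; the reduction to a single self-improving inequality is the nontrivial step, and the paper's choice of test function is precisely what makes it work.

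In short: same strategic idea, but with a false bound on the degenerate weight for $p<2$, an incomplete Young step, and a missing mechanism (the higher power $w^\alpha\varphi_R^{\alpha+1}$ with $\alpha>N-1$, or alternatively the doubling lemma of \cite{FMS}) to conclude on the unbounded half-space.
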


\begin{proof} We prove the result  {when \eqref{eq:derivatafnegativa} and \eqref{eq:vicinoamenouno} are in force. The other case is similar.}


We distinguish two cases:

\noindent {\bf {Case 1: $1<p <2$.}} We set
\begin{equation}\label{test}
\psi:=w^\alpha \varphi_R^{\alpha+1},
\end{equation}
where $\alpha >1$,  $R >0$ large,
$w:=(u-v)^+$ and $\varphi_R$ is a standard cutoff function such that
$0\leq \varphi_R \leq 1$ on $\R^N$, $\varphi_R = 1$ in $B_R$,
$\varphi_R = 0$ outside $B_{2R}$, with $|\nabla \varphi_R| \leq 2/R$
in $B_{2R} \setminus B_R$. Let us define $\mathcal{C}(2R):=\Sigma \cap B_{2R} \cap \text{supp} (\omega)$. First of all we notice that $\psi\in W^{1,p}_0(\mathcal{C}(2R))$. By density arguments we can take $\psi$
as test function in \eqref{problem1weaksubsol} and
\eqref{problem1weaksupersol}, so that, subtracting we obtain
\begin{equation}\label{starteqpminhi}
\begin{split}
&\alpha \int_{\mathcal{C}(2R)} (|\nabla u|^{p-2} \nabla u - |\nabla
v|^{p-2} \nabla v, \nabla w)
w^{\alpha-1} \varphi_R^{\alpha+1} \, dx \\
&\leq - (\alpha+1) \int_{\mathcal{C}(2R)}  (|\nabla u|^{p-2} \nabla
u -
|\nabla v|^{p-2} \nabla v, \nabla \varphi_R) w^\alpha \varphi_R^{\alpha+1} \, dx\\
& \ \ \ \ + \int_{\mathcal{C}(2R)}  [f(u)-f(v)] w^\alpha
\varphi_R^{\alpha+1} \, dx\,.
\end{split}
\end{equation}
From \eqref{starteqpminhi}, using \eqref{eq:inequalities} and
noticing that $f$ is decreasing in $[-1,-1+{\delta}]$, we obtain
\begin{equation}\label{middlediseq1}
\begin{split}
&\alpha C_1 \int_{\mathcal{C}(2R)}  \left(|\nabla u| + |\nabla
v|\right)^{p-2} |\nabla w|^2
w^{\alpha-1} \varphi_R^{\alpha+1} \, dx \\
&\leq \alpha \int_{\mathcal{C}(2R)} (|\nabla u|^{p-2} \nabla u -
|\nabla v|^{p-2} \nabla v, \nabla w)
w^{\alpha-1} \varphi_R^{\alpha+1} \, dx \\
&\leq - (\alpha+1) \int_{\mathcal{C}(2R)}  (|\nabla u|^{p-2} \nabla
u -
|\nabla v|^{p-2} \nabla v, \nabla \varphi_R) w^\alpha \varphi_R^\alpha \, dx\\
& \ \ \ \  + \int_{\mathcal{C}(2R)}  f'(\xi) (u-v)^+w^\alpha
\varphi_R^{\alpha+1} \,
dx\,\\
&\leq (\alpha+1) C_3 \int_{\mathcal{C}(2R)} |\nabla w|^{p-1} |\nabla
\varphi_R| w^\alpha \varphi_R^\alpha \, dx - L\int_{\mathcal{C}(2R)}
(u- v)^+w^\alpha \varphi_R^{\alpha+1} \, dx,
\end{split}
\end{equation}
where $\xi$ is some point that belongs to $(v,u)$.
Hence, recalling also that $|\nabla u|, |\nabla v| \in
L^\infty({\Sigma})$, we deduce
\begin{equation}\label{middlediseq2}
\begin{split}
&\alpha C_1 \int_{\mathcal{C}(2R)}  \left(|\nabla u| + |\nabla
v|\right)^{p-2} |\nabla w|^2
w^{\alpha-1} \varphi_R^{\alpha+1} \, dx\\
&\leq (\alpha+1) C_3 \int_{\mathcal{C}(2R)} |\nabla w|^{p-1} |\nabla
\varphi_R| w^\alpha \varphi_R^\alpha \, dx -L\int_{\mathcal{C}(2R)}
w^{\alpha+1} \varphi_R^{\alpha+1}
\, dx \\
& \leq (\alpha+1) C \int_{\mathcal{C}(2R)} |\nabla
\varphi_R| w^\alpha \varphi_R^\alpha \, dx - L\int_{\mathcal{C}(2R)}
w^{\alpha+1} \varphi_R^{\alpha+1} \, dx
\end{split}
\end{equation}
where $C=C(p, \|\nabla
u\|_{L^\infty({\Sigma})},\|\nabla
v\|_{L^\infty({\Sigma})})$. Exploiting  the weighted
Young inequality with exponents $\alpha+1$ and ${(\alpha+1)}/{\alpha}$ in \eqref{middlediseq2},  we obtain
\begin{equation}\nonumber
\begin{split}
&\alpha C_1 \int_{\mathcal{C}(2R)}  \left(|\nabla u| + |\nabla
v|\right)^{p-2} |\nabla w|^2
w^{\alpha-1} \varphi_R^{\alpha+1} \, dx\\
&\leq \frac{C}{\sigma^{\alpha+1}}
\int_{\mathcal{C}(2R)} |\nabla \varphi_R|^{\alpha+1} \, dx+
 \alpha C
\sigma^{\frac{\alpha+1}{\alpha}}\int_{\mathcal{C}(2R)} w^{\alpha+1} \varphi_R^{\alpha+1}\, dx
 \\
 &-L\int_{\mathcal{C}(2R)}
w^{\alpha+1} \varphi_R^{\alpha+1} \, dx
\\
&\leq \frac{C}{\sigma^{\alpha+1}}
\int_{\mathcal{C}(2R)} |\nabla \varphi_R|^{\alpha+1} \, dx+ \left( \alpha C\sigma^{\frac{\alpha+1}{\alpha}} - L\right)
\int_{\mathcal{C}(2R)} w^{\alpha+1} \varphi_R^{\alpha+1}\, dx\\
&\leq
\frac{2^{\alpha+1}C}{\sigma^{{\alpha+1}}R^{\alpha-(N-1)}}
+ \left(\alpha C\sigma^{\frac{\alpha+1}{\alpha}} -L
\right) \int_{\mathcal{C}(2R)} w^{\alpha+1} \varphi_R^{\alpha+1} \,
dx.
\end{split}
\end{equation}
Now taking $\alpha> N-1$, if we choose $\sigma=\sigma(p,\alpha, L,N, \|\nabla
u\|_{L^\infty({\Sigma})},\|\nabla
v\|_{L^\infty({\Sigma})})>0$ sufficiently small
so that
\[\alpha C\sigma^{\frac{\alpha+1}{\alpha}} -L
< 0,\] we obtain
\begin{equation}\label{finaldiseq}
\int_{\mathcal{C}(R)} \left(|\nabla u| + |\nabla v|\right)^{p-2}
|\nabla w|^2 w^{\alpha-1} \, dx  \leq \frac{\tilde{C}}{\alpha\sigma^{{\alpha+1}}R^{\alpha-(N-1)}}.
\end{equation}
Passing to the limit in \eqref{finaldiseq} for $R \rightarrow +
\infty$, by Fatou's Lemma  we have
$$\int_{\Sigma} \left(|\nabla u| + |\nabla v|\right)^{p-2}
|\nabla w|^2 w^{\alpha-1} \, dx  \leq 0.$$ This implies that $u \leq
v$ in $\Sigma$.

\noindent {\bf {Case 2: $p \geq 2$.}} We set
\begin{equation}\label{test2}
\psi:=w \varphi_R^2,
\end{equation}
where $R >0$, $w:=(u-v)^+$
and $\varphi_R$ is the standard cutoff function defined above. First
of all we notice that $\psi\in W^{1,p}_0(B_{2R})$. Let us define $\mathcal{C}(2R):=\Sigma \cap B_{2R} \cap \text{supp} (\omega)$. By
density arguments we can take $\psi$ as test function in
\eqref{problem1weaksubsol} and \eqref{problem1weaksupersol}, so
that, subtracting we obtain
\begin{equation}\label{starteqpminhi2}
\begin{split}
&\int_{\mathcal{C}(2R)} (|\nabla u|^{p-2} \nabla u - |\nabla
v|^{p-2} \nabla v, \nabla w)
 \varphi_R^2 \, dx \\
&\leq- 2 \int_{\mathcal{C}(2R)}  (|\nabla u|^{p-2} \nabla u -
|\nabla v|^{p-2} \nabla v, \nabla \varphi_R) w \varphi_R \, dx\\
& \ \ \ \ + \int_{\mathcal{C}(2R)}  [f(u)-f(v)] w \varphi_R^2 \, dx\,.
\end{split}
\end{equation}
From \eqref{starteqpminhi2}, using \eqref{eq:inequalities} and that $f'(u) \leq -L$  in $[-1,-1+\delta]$, we obtain
\begin{equation}\label{middlediseq3}
\begin{split}
& C_1 \int_{\mathcal{C}(2R)}  \left(|\nabla u| + |\nabla
v|\right)^{p-2} |\nabla w|^2 \varphi_R^2 \, dx \\
&\leq  \int_{\mathcal{C}(2R)} (|\nabla u|^{p-2} \nabla u - |\nabla
v|^{p-2} \nabla v, \nabla w) \varphi_R^2 \, dx \\
&\leq- 2 \int_{\mathcal{C}(2R)}  (|\nabla u|^{p-2} \nabla u -
|\nabla v|^{p-2} \nabla v, \nabla \varphi_R) w \varphi_R \, dx\\
&\ \ \  +\int_{\mathcal{C}(2R)} f'(\xi) (u-v)^+w
\varphi_R^2 \,
dx\,\\
&\leq 2 C_2 \int_{\mathcal{C}(2R)} \left(|\nabla u| + |\nabla
v|\right)^{p-2} |\nabla w| \ |\nabla \varphi_R| w \varphi_R \, dx
\\
&\ \ \ - L \int_{\mathcal{C}(2R)} (u- v)^+w \varphi_R^2 \, dx,
\end{split}
\end{equation}
where $\xi$ is some point that belongs to $(v,u)$. Using  in \eqref{middlediseq3} the weighted Young inequality (and the fact that $|\nabla
u|, |\nabla v| \in L^\infty({\Sigma})$), we obtain

\begin{equation}
\begin{split}
 C_1 \int_{\mathcal{C}(2R)}&  \left(|\nabla u| + |\nabla
v|\right)^{p-2} |\nabla w|^2 \varphi_R^2 \, dx \\
&\leq  2 C_2 \int_{\mathcal{C}(2R)} \left(|\nabla u| +
|\nabla v|\right)^{\frac{p-2}{2}} |\nabla w|
\left(|\nabla u| + |\nabla
v|\right)^{\frac{p-2}{2}} |\nabla \varphi_R| w \varphi_R  \, dx \\
&\ \ \ - L \int_{\mathcal{C}(2R)} w^2 \varphi_R^2 \,
dx\\
&\leq  C_2 \sigma \int_{\mathcal{C}(2R)} \left(|\nabla u| + |\nabla
v|\right)^{p-2} |\nabla w|^2 \, dx \\
& \ \ \ + \frac{C_2}{\sigma} \int_{\mathcal{C}(2R)} \left(|\nabla u| +
|\nabla v|\right)^{p-2} |\nabla
\varphi_R|^2 w^2 \varphi_R^2 \, dx \\
&\ \ \ - L \int_{\mathcal{C}(2R)} w^2 \varphi_R^2  \, dx.\\
&\leq  C_2 \sigma \int_{\mathcal{C}(2R)} \left(|\nabla u| + |\nabla
v|\right)^{p-2} |\nabla w|^2 \, dx \\
&\ \ \ + \left(\frac{C}{\sigma R^2} - L\right)
\int_{\mathcal{C}(2R)} w^2 \varphi_R^2 \, dx,
\end{split}
\end{equation}
where $C=C(p, \|\nabla
u\|_{L^\infty({\Sigma})},\|\nabla
v\|_{L^\infty({\Sigma})})$ is a positive constant. Hence, up to redefine the constants,  we have
\begin{equation}\label{fineldiseq}
\begin{split}
\int_{\mathcal{C}(R)}  \left(|\nabla u| + |\nabla v|\right)^{p-2}
|\nabla w|^2 \, dx &\leq C\sigma
\int_{\mathcal{C}(2R)} \left(|\nabla u| + |\nabla v|\right)^{p-2}
|\nabla w|^2 \, dx \\
&\ \ \ + \frac{1}{C_1}\left(\frac{C}{\sigma R^2} - L\right)
\int_{\mathcal{C}(2R)} w^2 \varphi_R^2 \, dx.
\end{split}
\end{equation}
Now we set
\[\displaystyle \mathcal{L}(R):=\int_{\mathcal{C}(R)}
\left(|\nabla u| + |\nabla v|\right)^{p-2} |\nabla w|^2 \, dx.\]
By our assumption,$|\nabla u|, |\nabla v| \in
L^\infty(\Sigma)$, it follows that $\mathcal{L}(R) \leq \dot{C} R^N$
for every $R>0$ and for some $\dot C=\dot C(p, \|\nabla
u\|_{L^\infty({\Sigma})},\|\nabla
v\|_{L^\infty({\Sigma})})$. Moreover, in equation \eqref{fineldiseq}, we take $\sigma=\sigma(p, N, \|\nabla
u\|_{L^\infty({\Sigma})},\|\nabla
v\|_{L^\infty({\Sigma})})>0$ sufficiently small so
that $C\sigma < {1}/{2^N}$. Finally we  fix  $R_0
>0$ such that \[\displaystyle \frac{C}{\sigma R^2} - L < 0\]
for every $R \geq R_0$. Therefore by \eqref{fineldiseq} we deduce that
\begin{equation}
\begin{cases}
\mathcal{L}(R) \leq \vartheta \mathcal{L}(2R) \quad &\forall R \geq
R_0\\
\mathcal{L}(R) \leq \dot{C} R^N \quad &\forall R \geq R_0,
\end{cases}
\end{equation}
where $\displaystyle \vartheta:=C\sigma<1/2^N$. By applying
Lemma 2.1 in \cite{FMS} it follows that $\mathcal{L}(R)=0$ for all
$R \geq R_0$. Hence $u \leq v$ in $\Sigma$.

\end{proof}

Let us recall a weak comparison principle in narrow domains that will be an essential tool in the proof of Theorem \ref{1Dsolution}.
\begin{thm}[\cite{FMSR}]\label{compprinciplenarrow}
Let $1<p<2$ and $N>1$. Fix $\lambda_0 > 0$ and $L_0>0$. Consider $a,b \in \R$, with $a < b$, $\tau, \epsilon > 0$ and set
$${\Sigma_{(a,b)}:= \R^{N-1} \times (a,b).}$$
Let $u,v \in C^{1,\alpha}_{loc}(\overline{\Sigma}_{(a,b)})$ such that $\|u\|_\infty + \|\nabla u\|_\infty \leq L_0$, $\|v\|_\infty + \|\nabla v\|_\infty \leq L_0$, $f$ fulfills \textbf{$(h_f)$} and
\begin{equation}
\begin{cases}
-\Delta_p u \leq f(u) \quad & \text{in} \  \Sigma_{(a,b)} \\
-\Delta_p v \geq f(v) \quad & \text{in} \  \Sigma_{(a,b)} \\
u \leq v \quad & \text{on} \  \partial \mathcal{S}_{(\tau, \epsilon)}, \\
\end{cases}
\end{equation}
where the open set $\mathcal{S}_{(\tau, \epsilon)} \subseteq \Sigma_{(a,b)}$ is such that
$$\mathcal{S}_{(\tau, \epsilon)} = \bigcup_{x' \in \R^{N-1}} I_{x'}^{\tau, \epsilon},$$
and the open set $I_{x'}^{\tau, \epsilon} \subseteq \{x'\} \times (a,b)$ has the form
$$I_{x'}^{\tau, \epsilon} = A_{x'}^\tau \cup B_{x'}^\epsilon, \; \text{with} \; |A_{x'}^\tau \cap B_{x'}^\epsilon| = \emptyset$$
and, for $x'$ fixed, $A_{x'}^\tau, B_{x'}^\epsilon \subset (a,b)$ are measurable sets such that
$$|A^\tau_{x'}| \leq \tau \quad \text{and} \quad B_{x'}^\epsilon \subseteq \{x_N \in \R \ | \ |\nabla u(x',x_N)| < \epsilon, \ |\nabla v (x',x_N)| < \epsilon \}.$$
Then there exist
$$\tau_0=\tau_0(N,p,a,b,L_0)>0$$
and
$$\epsilon_0=\epsilon_0(N,p,a,b,L_0)>0$$
such that, if $0 < \tau < \tau_0$ and $0 < \epsilon < \epsilon_0$, it follows that
$$u \leq v \quad \text{in} \,\,\mathcal{S}_{(\tau, \epsilon)}.$$
\end{thm}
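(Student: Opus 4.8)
I would follow the scheme of \cite{FMSR}. Set $w:=(u-v)^+$ and $\rho:=(|\nabla u|+|\nabla v|)^{p-2}$. Since $\|\nabla u\|_\infty,\|\nabla v\|_\infty\le L_0$ and $u\le v$ on $\partial\mathcal{S}_{(\tau,\epsilon)}$, the function $w$ is Lipschitz with $|\nabla w|\le 2L_0$, is supported in $\overline{\mathcal{S}_{(\tau,\epsilon)}}$, vanishes on $\partial\mathcal{S}_{(\tau,\epsilon)}$, and satisfies $\rho\,|\nabla w|^2\le\max\{1,4L_0^2\}$ pointwise (distinguishing $|\nabla u|+|\nabla v|\le1$ from $|\nabla u|+|\nabla v|>1$), so all the integrals below are finite. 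I would fix once and for all a number $\gamma>N-1$ and the cutoff $\varphi_R(x):=\phi(|x'|/R)$, $\phi\in C_c^\infty(\R^{N-1})$, $\phi\equiv1$ on $\{|x'|<1\}$, $\operatorname{supp}\phi\subset\{|x'|<2\}$, so that $\varphi_R$ depends only on $x'$ and $|\nabla\varphi_R|\le C/R$ on $\{R\le|x'|\le 2R\}$. The first step is to insert $\psi:=w^{\gamma}\varphi_R^{\gamma+1}$ (a nonnegative Lipschitz function with compact support, since $(a,b)$ is bounded, vanishing on $\partial\Sigma_{(a,b)}$, hence admissible by density) into the difference of the weak inequalities for $u$ and $v$. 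Using the first inequality in \eqref{eq:inequalities} on the principal term; the third one (available because $1<p<2$), together with $|\nabla w|^{p-1}\le(2L_0)^{p-1}$ and a weighted Young inequality, on the cutoff term; and the Lipschitz bound $f(u)-f(v)\le C_f\,w$ on $\{u>v\}$, with $C_f:=\|f'\|_{L^\infty([-1,1])}$ provided by $(h_f)$; one reaches, after fixing the Young parameter $\sigma=\sigma(N,p)$,
\[
\gamma C_1\!\int_{\mathcal{S}_{(\tau,\epsilon)}}\!\!\rho\,|\nabla w|^2 w^{\gamma-1}\varphi_R^{\gamma+1}\,dx\;\le\;\frac{C}{R^{\gamma-N+2}}\;+\;\big(C\,\sigma^{\frac{\gamma+1}{\gamma}}+C_f\big)\!\int_{\mathcal{S}_{(\tau,\epsilon)}}\!\! w^{\gamma+1}\varphi_R^{\gamma+1}\,dx,
\]
with $C=C(N,p,\gamma,a,b,L_0)$, using that $\{R\le|x'|\le2R\}\cap\mathcal{S}_{(\tau,\epsilon)}$ has measure $\le C(b-a)R^{N-1}$.

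The core of the proof is a Poincar\'e-type inequality on $\mathcal{S}_{(\tau,\epsilon)}$ that I would establish slicewise:
\[
\int_{\mathcal{S}_{(\tau,\epsilon)}}\!\! w^{\gamma+1}\varphi_R^{\gamma+1}\,dx\;\le\;\kappa(\tau,\epsilon)\!\int_{\mathcal{S}_{(\tau,\epsilon)}}\!\!\rho\,|\nabla w|^2 w^{\gamma-1}\varphi_R^{\gamma+1}\,dx,\qquad \kappa(\tau,\epsilon):=\tfrac{(\gamma+1)^2}{2}(b-a)\max\big\{(2L_0)^{2-p}\tau,\,(b-a)(2\epsilon)^{2-p}\big\}.
\]
For fixed $x'$, the function $w(x',\cdot)$ vanishes at the left endpoint of each connected component of $I_{x'}^{\tau,\epsilon}$ (this is where $u\le v$ on $\partial\mathcal{S}_{(\tau,\epsilon)}$ enters), so by the fundamental theorem of calculus $w^{(\gamma+1)/2}(x',x_N)$ equals the integral of $\partial_{x_N}(w^{(\gamma+1)/2})=\tfrac{\gamma+1}{2}w^{(\gamma-1)/2}\partial_{x_N}w$ from that endpoint; splitting the integration set into its parts inside $A_{x'}^\tau$ and inside $B_{x'}^\epsilon$ and using Cauchy--Schwarz on each (with $|A_{x'}^\tau|\le\tau$ and $|B_{x'}^\epsilon|\le b-a$) gives
\[
w^{\gamma+1}(x',x_N)\;\le\;2\tau\!\int_{A_{x'}^\tau}\!\!\big|\partial_{x_N}(w^{(\gamma+1)/2})\big|^2\,ds\;+\;2(b-a)\!\int_{B_{x'}^\epsilon}\!\!\big|\partial_{x_N}(w^{(\gamma+1)/2})\big|^2\,ds .
\]
Since $\big|\partial_{x_N}(w^{(\gamma+1)/2})\big|^2=\tfrac{(\gamma+1)^2}{4}w^{\gamma-1}\rho^{-1}\big(\rho\,|\partial_{x_N}w|^2\big)$, the decisive point is that $\rho^{-1}=(|\nabla u|+|\nabla v|)^{2-p}$ is \emph{small} exactly where the gradients are small: $\rho^{-1}\le(2\epsilon)^{2-p}$ on $B_{x'}^\epsilon$, while $\rho^{-1}\le(2L_0)^{2-p}$ everywhere. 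Plugging these in, using $|\partial_{x_N}w|\le|\nabla w|$, integrating in $x_N$ over the slice (length $\le b-a$) and then in $x'$ against $\varphi_R^{\gamma+1}$ (legitimate since $\varphi_R$ is independent of $x_N$) yields the displayed inequality.

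Combining the two displays, I would pick $\tau_0=\tau_0(N,p,a,b,L_0)$ and $\epsilon_0=\epsilon_0(N,p,a,b,L_0)$ so small that $\big(C\sigma^{(\gamma+1)/\gamma}+C_f\big)\,\kappa(\tau,\epsilon)<\gamma C_1/2$ whenever $0<\tau<\tau_0$ and $0<\epsilon<\epsilon_0$ ($\gamma$ and $\sigma$ depending only on $N,p$). Absorbing the $w^{\gamma+1}$-term then leaves $\int_{B_R}\rho\,|\nabla w|^2 w^{\gamma-1}\,dx\le C\,R^{-(\gamma-N+2)}$; as $\gamma>N-1$, letting $R\to+\infty$ forces $\rho\,|\nabla w|^2 w^{\gamma-1}\equiv0$ on $\mathcal{S}_{(\tau,\epsilon)}$, hence $|\nabla w|\equiv0$ on $\{w>0\}$, and since $w$ is continuous, locally constant on $\{w>0\}$ and zero on $\partial\mathcal{S}_{(\tau,\epsilon)}$, it follows that $w\equiv0$, that is $u\le v$ in $\mathcal{S}_{(\tau,\epsilon)}$. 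The step I expect to be the main obstacle is the Poincar\'e-type inequality: one must carry out rigorously the chaining along the alternating pieces $A_{x'}^\tau$, $B_{x'}^\epsilon$ (especially on slices where $B_{x'}^\epsilon$ does not touch $\partial\mathcal{S}_{(\tau,\epsilon)}$) and, crucially, exploit the bound $\rho^{-1}\le(2\epsilon)^{2-p}$ on $B_{x'}^\epsilon$ --- the mechanism that genuinely requires $p<2$ (for $p\ge2$ the weight degenerates near critical points instead of blowing up). A secondary technical point is arranging the cutoff term so that the remainder decays in $R$, where the $L^\infty$ bounds on $\nabla u,\nabla v$ and the choice $\gamma>N-1$ are used. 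Complete details are in \cite{FMSR}.
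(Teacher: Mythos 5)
The paper does not actually prove this theorem: it merely cites \cite[Theorem 1.6]{FMSR}, so there is no argument in the present paper to compare against. Your reconstruction correctly captures the essential mechanism of the proof in \cite{FMSR} --- the energy estimate from the test function $w^\gamma\varphi_R^{\gamma+1}$, and especially the slicewise Poincar\'e inequality whose constant $\kappa(\tau,\epsilon)$ shrinks because $\rho^{-1}=(|\nabla u|+|\nabla v|)^{2-p}$ is small precisely on $B_{x'}^\epsilon$ (this is exactly where $p<2$ enters) while $|A_{x'}^\tau|\le\tau$ controls the remaining piece. One small remark: your Young/cutoff remainder decays like $R^{N-2-\gamma}$, so $\gamma>N-2$ already suffices (your stronger choice $\gamma>N-1$ is harmless), and the constants you obtain implicitly also depend on $\|f'\|_{L^\infty([-1,1])}$ (via $C_f$); the paper's statement omits this dependence, but that is an imprecision in the statement rather than a gap in your argument. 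The one step you flag yourself --- carrying out the chaining rigorously along the alternating $A$/$B$ portions from the left endpoint of each connected component of $I_{x'}^{\tau,\epsilon}$, where $w$ vanishes by the boundary hypothesis --- is indeed the technical crux, and your sketch treats it adequately.
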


The proof of this result is contained in \cite[Theorem 1.6]{FMSR}, where the authors proved the same result for a more general class of operators and nonlinearities and also in the presence of a first order term.

\section{Monotonicity with respect to $x_N$}\label{sec: moving}

The purpose of this section consists in showing that all the non-trivial solutions $u$  to \eqref{equation1} that satisfies \eqref{inftyassumptions} are increasing in the $x_N$ direction. Since in our problem the right hand side
depends only on $u$, it is possible to define the following set
$$\mathcal{Z}_{f(u)}:= \{ x \in \R^N \ | \ u(x) \in \mathcal{N}_f\}.$$
{Without any apriori assumption on the behaviour of $\nabla u$, the set $\mathcal{Z}_{f(u)}$ may be very wild, see Figure \ref{fg:wild}.}
\begin{figure}[htbp]
	\centering
	\includegraphics[scale=.3]{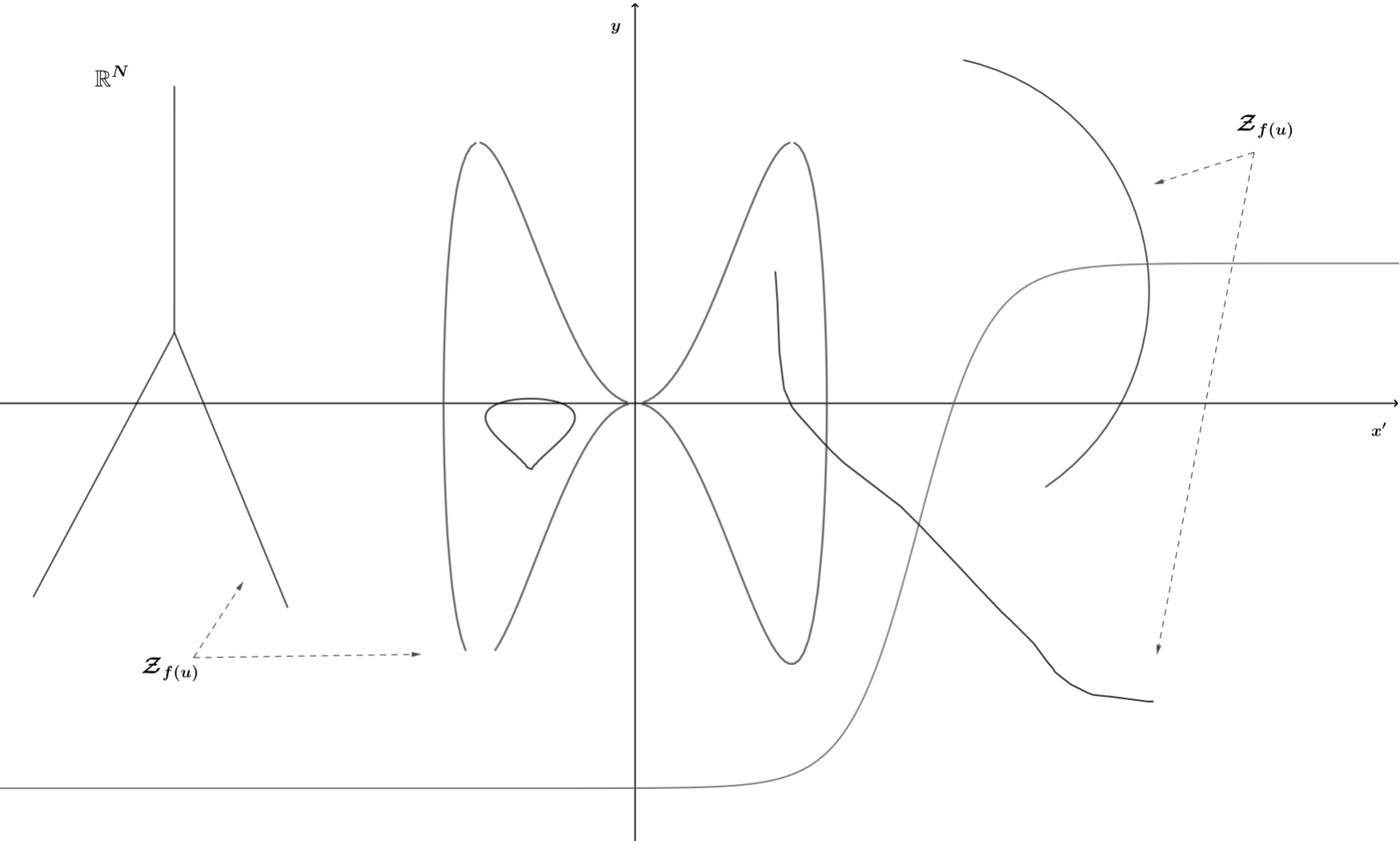}\\
	\caption{The  set $\mathcal{Z}_{f(u)}$ \label{fg:wild}}
\end{figure}

We start by proving a lemma that we will use repeatedly in the sequel of the work.

Let us define the upper hemisphere
\begin{equation}\label{eq:hemisphere}
\S^{N-1}_+:=\{\nu \in \S^{N-1} \ | \ (\nu, e_N)>0\}.
\end{equation}

\begin{lem}\label{lem:utile}
Let $\mathcal U$ a connected component of $\mathbb R^N\setminus \mathcal{Z}_{f(u)}$, $\eta \in \S^{N-1}_+$ and let us assume that $\partial_\eta u\geq 0$ in $\mathcal U$.
Then
\[\partial_\eta u> 0\quad \text{in}\,\,\mathcal U.\]
\end{lem}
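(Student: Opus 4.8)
The plan is to argue by contradiction and to localize the problem so that one of the two strong maximum principles for the linearized operator (Theorem~\ref{ClassicSMPlinearized} or Theorem~\ref{SMPlinearized}) can be applied. Suppose $\partial_\eta u \geq 0$ in $\mathcal U$ but $\partial_\eta u$ is not strictly positive; then there is a point $x_0 \in \mathcal U$ with $\partial_\eta u(x_0)=0$. The first observation is that, since $\mathcal U$ is a connected component of $\mathbb R^N \setminus \mathcal Z_{f(u)}$, on the \emph{whole} of $\mathcal U$ the function $f(u)$ does not vanish and, by continuity and connectedness, has a fixed sign: either $f(u)>0$ on $\mathcal U$ or $f(u)<0$ on $\mathcal U$. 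This is exactly the sign hypothesis required by Theorem~\ref{SMPlinearized}. Recall also, as noted in the excerpt just before the lemma, that $\partial_\eta u \in H^{1,2}_{\rho,loc}$ solves the linearized equation \eqref{linearizedequation} with $w=u$.

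The issue is that $\mathcal U$ need not be bounded (indeed it is an open subset of $\mathbb R^N$), while the cited strong maximum principles are stated on a domain $\Omega$; so I would apply them on balls. Fix $x_0 \in \mathcal U$ with $\partial_\eta u(x_0)=0$ and choose $r>0$ small enough that the ball $B:=B_r(x_0)$ satisfies $\overline B \subset \mathcal U$. On $B$ we have $u \in C^1(\overline B)$, $f(u)$ of one fixed sign on $\overline B$, and $\partial_\eta u \geq 0$ on $B$; Theorem~\ref{SMPlinearized} (applied with $\Omega = B$) then forces $\partial_\eta u \equiv 0$ on $B$. Hence the set $\{x \in \mathcal U : \partial_\eta u \equiv 0 \text{ in a neighbourhood of }x\}$ is nonempty and open; I claim it is also closed in $\mathcal U$. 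Indeed if $x_k \to x_\infty \in \mathcal U$ with $\partial_\eta u$ vanishing near each $x_k$, then $\partial_\eta u(x_\infty)=0$ by continuity, and repeating the ball argument at $x_\infty$ shows $\partial_\eta u \equiv 0$ near $x_\infty$. By connectedness of $\mathcal U$ we conclude $\partial_\eta u \equiv 0$ throughout $\mathcal U$.

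It remains to rule out $\partial_\eta u \equiv 0$ on $\mathcal U$, and this is where the hypothesis $\eta \in \S^{N-1}_+$ enters in an essential way, so I expect the ``no identically zero'' step to be the only genuinely delicate point. The idea is that $\partial_\eta u \equiv 0$ on $\mathcal U$ would mean $u$ is constant along the direction $\eta$ inside $\mathcal U$; since $(\eta,e_N)>0$, moving along $\eta$ increases $x_N$, and one should be able to propagate this flatness to derive a contradiction either with the boundary behaviour of $\mathcal U$ relative to $\mathcal Z_{f(u)}$ or, ultimately, with the limit conditions \eqref{inftyassumptions} (which force $u$ to take values near $\pm 1$, hence outside any fixed compact set $u$ cannot be $\eta$-constant on a full line). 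Concretely: along a segment in direction $\eta$ through $x_0$, as long as we stay in $\mathcal U$, $u$ is constant equal to $u(x_0)$; this segment must eventually hit $\partial\mathcal U \subseteq \mathcal Z_{f(u)}$, where $u$ takes a value in $\mathcal N_f$, yet $u$ is continuous and constant up to that boundary point, forcing $u(x_0)\in\mathcal N_f$, i.e. $x_0 \in \mathcal Z_{f(u)}$ — contradicting $x_0 \in \mathcal U$. (If the segment never leaves $\mathcal U$ in one of the two directions, then using $(\eta,e_N)>0$ the $x_N$-component along it tends to $+\infty$ or $-\infty$, and \eqref{inftyassumptions} forces $u \to \pm 1$ along it, again contradicting constancy unless $u(x_0)=\pm1$, which by Remark~\ref{rem:1} and $|u|<1$ is impossible for a nontrivial solution, or in any case puts $u(x_0)\in\mathcal N_f$.) This yields the contradiction and completes the proof; the main obstacle, as anticipated, is handling the unboundedness of $\mathcal U$ and making the propagation-of-flatness argument along $\eta$ rigorous, which is precisely where connectedness of $\mathcal U$, the definition of $\mathcal Z_{f(u)}$, and the upper-hemisphere condition are combined.
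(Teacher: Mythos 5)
Your proposal is correct and follows essentially the same route as the paper: invoke the strong maximum principle for the linearized operator (Theorem~\ref{SMPlinearized}) to reduce to the dichotomy $\partial_\eta u>0$ or $\partial_\eta u\equiv 0$ on $\mathcal U$, then kill the second alternative by tracing a line through a point of $\mathcal U$ in direction $\eta$ and observing that either it exits $\mathcal U$ through $\partial\mathcal U\subseteq\mathcal Z_{f(u)}$, forcing $u(x_0)\in\mathcal N_f$, or it stays in $\mathcal U$ so that $x_N\to\pm\infty$, forcing $u(x_0)=\pm1$ and contradicting $|u|<1$ from Remark~\ref{rem:1}. The only cosmetic difference is that the paper applies Theorem~\ref{SMPlinearized} directly on $\mathcal U$ (and traces only in the $-\eta$ direction), while you localize to balls and propagate by a connectedness argument, which is a slightly more cautious way of handling the fact that $\mathcal U$ may be unbounded and that the theorem's sign hypothesis is stated on $\overline\Omega$.
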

\begin{proof}
Using Theorem \ref{SMPlinearized} we deduce that either $\partial_\eta u> 0$ in $\mathcal U$ or $\partial_\eta u\equiv 0$ in $\mathcal U$. 
{For contradiction, assume that} 
$\partial_\eta u \equiv 0$ in $\mathcal U$.  {Pick $P_0 \in \mathcal U$ and let us define}
\[r(t)=P_0+t\eta, \quad t\in \mathbb{R}\]
and 
\begin{equation}\label{eq:inffff}
t_0=\inf \Big\{t\in \mathbb R\,:\, r(\vartheta) \in \mathcal U,  \; \forall \vartheta \in (t,0]\Big\}.
\end{equation}
We note that the infimum in \eqref{eq:inffff} is well defined, since by definition the connected component $\mathcal U$ is an open set, and that 
$t_0 \in [-\infty, 0)$. 

In the case $t_0=-\infty$, we deduce that $u(P_0)=-1$. Indeed $u$ is constant on $r(t)$ for $t\in (-\infty, 0]$  (recall that  $\partial_\eta u \equiv 0$ in $\mathcal U$) and \eqref{inftyassumptions} holds. But this is a contradiction, see Remark \ref{rem:1}.

In the case $t_0>-\infty$, we deduce that $r(t_0)\in \mathcal{Z}_{f(u)}$ and therefore $f(u(r(t_0)))=f(u(P_0+t_0\eta))=0$. But $u$ is constant on $r(t)$ for $t_0\leq t\leq 0$, which implies that $f(u(P_0))=f(u(P_0+t_0\eta))=0$, namely $P_0\in \mathcal{Z}_{f(u)}$. {The latter clearly contradicts the assumption $P_0 \in \mathcal U$. Therefore $\partial_\eta u> 0$ in $\mathcal U$ as desired.}
\end{proof}

\begin{prop}\label{monotonicitybis}
Under the assumptions of Theorem \ref{1Dsolution}, we have that
\begin{equation}\label{mmmmmm}
\partial_{x_N} u > 0 \quad \text{in} \,\, \R^N\setminus
\mathcal{Z}_{f(u)}.
\end{equation}
\end{prop}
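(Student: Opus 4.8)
The plan is to run the moving plane method in the $x_N$-direction to show that $u$ is non-decreasing in $x_N$, and then to upgrade this to \eqref{mmmmmm} via Lemma \ref{lem:utile}. For $\lambda\in\R$ I set $\Sigma_\lambda:=\R^{N-1}\times(-\infty,\lambda)$, denote by $x_\lambda:=(x',2\lambda-x_N)$ the reflection of $x=(x',x_N)$ through $T_\lambda:=\{x_N=\lambda\}$, and put $u_\lambda(x):=u(x_\lambda)$; since $-\Delta_p$ and $f(u)$ are invariant under this reflection, $u_\lambda$ is again a weak solution of \eqref{equation1} with the same $L^\infty$ and $C^{1,\alpha}_{loc}$ bounds (Remark \ref{rem:1}) and $u=u_\lambda$ on $\partial\Sigma_\lambda$. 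The key claim is that $u\le u_\lambda$ in $\Sigma_\lambda$ for every $\lambda\in\R$: once this is available, taking $y_1<y_2$ and $\lambda:=(y_1+y_2)/2$ (so $2\lambda-y_1=y_2$) gives $u(x',y_1)\le u(x',y_2)$, hence $\partial_{x_N}u\ge0$ on $\R^N$, and then Lemma \ref{lem:utile} applied with $\eta=e_N\in\S^{N-1}_+$ on each connected component of $\R^N\setminus\mathcal Z_{f(u)}$ yields $\partial_{x_N}u>0$, i.e. \eqref{mmmmmm}.

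To start the procedure I would use that, by {\bf ($h_f$)}, there are $\delta,L>0$ with $f'<-L$ on $[-1,-1+\delta]$, and, by the uniform limit in \eqref{inftyassumptions}, an $M\in\R$ with $-1\le u\le-1+\delta$ on $\{x_N\le M\}$; then for every $\lambda\le M$ the triple $(u,u_\lambda,\Sigma_\lambda)$ satisfies the hypotheses of Proposition \ref{weakcomparisonprinciple}, giving $u\le u_\lambda$ in $\Sigma_\lambda$. Hence $\Lambda:=\sup\{\mu\in\R:\ u\le u_\lambda\ \text{in }\Sigma_\lambda\ \forall\,\lambda\le\mu\}\ge M$ is well defined, and by local uniform convergence $u\le u_\Lambda$ in $\Sigma_\Lambda$; everything then reduces to proving $\Lambda=+\infty$.

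Assuming $\Lambda<+\infty$, I would derive a contradiction by showing that $u\le u_\lambda$ still holds in $\Sigma_\lambda$ for all $\lambda$ slightly larger than $\Lambda$, decomposing $\Sigma_\lambda$ into: (i) a far tail $\{x_N\le -T\}$, $T$ large, where \eqref{inftyassumptions} makes $u$ close to $-1$ and $u_\lambda$ close to $1$, so $u<u_\lambda$ trivially; (ii) the part of the bounded slab $\{-T<x_N<\Lambda\}$ where $|\nabla u|+|\nabla u_\Lambda|$ is bounded away from $0$, on which the strong comparison principle (Theorem \ref{classicalSCP}) gives $u<u_\Lambda$ and, by a compactness argument using the translation invariance of \eqref{equation1} in the $x'$-variables, the $C^{1,\alpha}_{loc}$ bounds and the uniform limits \eqref{inftyassumptions} (a limit of $x'$-translates would be an entire solution along which the strong comparison principle forces the tangency to propagate, contradicting \eqref{inftyassumptions}), one in fact gets $u_\Lambda-u\ge c>0$ uniformly, a gap that persists for $\lambda-\Lambda$ small; and (iii) the remaining set, which for each $x'$ is the union of an interval of length $O(\lambda-\Lambda)$ with a portion of $\{|\nabla u|,|\nabla u_\lambda|\ \text{small}\}$ --- precisely the structure of the set $\mathcal S_{(\tau,\epsilon)}$ of Theorem \ref{compprinciplenarrow}, applied on the slab $\Sigma_{(-T,\lambda)}$, so that choosing $\lambda-\Lambda$ small enough ($\tau<\tau_0$, $\epsilon<\epsilon_0$) gives $u\le u_\lambda$ there. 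Combining (i)--(iii) yields $u\le u_\lambda$ in $\Sigma_\lambda$, contradicting the maximality of $\Lambda$; hence $\Lambda=+\infty$. (Note that $u\not\equiv u_\Lambda$ in $\Sigma_\Lambda$, since otherwise $u$ would be symmetric about $T_\Lambda$, contradicting $\lim_{x_N\to+\infty}u=1\ne-1=\lim_{x_N\to-\infty}u$; this is what the propagation argument in (ii) rests on.)

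I expect step (ii)--(iii) to be the main obstacle. Because of the possibly wild set $\mathcal Z_{f(u)}$ and the sign changes of $f$, neither the weak nor the strong comparison principle is available on the whole slab, so the classical semilinear argument breaks down; the remedy is to separate the non-degenerate part of the slab --- where a strict and stable ordering is produced by the strong comparison principle together with translation invariance --- from the degenerate/narrow remainder, which is absorbed by the narrow-domain comparison principle Theorem \ref{compprinciplenarrow}. Keeping all these estimates uniform in the unbounded $x'$-directions is the source of the technical difficulties.
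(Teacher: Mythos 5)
Your overall roadmap is the same as the paper's: start moving planes at $-\infty$ via Proposition \ref{weakcomparisonprinciple}, assume $\Lambda<+\infty$, split the half-space into a far tail, a middle region, and a narrow/degenerate remainder handled by Theorem \ref{compprinciplenarrow}, then conclude with Lemma \ref{lem:utile}. But step (ii) contains a genuine gap, and it is exactly the point the paper resolves with Proposition \ref{connectedness} and Proposition \ref{localizationSupport}.

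You assert that on the part of the slab where $|\nabla u|+|\nabla u_\Lambda|$ is bounded away from $0$, Theorem \ref{classicalSCP} ``gives $u<u_\Lambda$,'' and then that in the compactness limit ``the strong comparison principle forces the tangency to propagate, contradicting \eqref{inftyassumptions}.'' Neither strong comparison principle available here does that. Theorem \ref{classicalSCP} only propagates equality within a single connected component of $\Omega\setminus\mathcal Z$ (critical set), and Theorem \ref{SCPLucioeDino} only within a domain where $f(u)$ has a fixed sign, i.e.\ within a connected component of $\Sigma_\lambda\setminus\mathcal{Z}_{f(u)}$. Such a component may well be bounded and disjoint from the far tail; in that case $u\equiv u_\Lambda$ on it is perfectly compatible with \eqref{inftyassumptions}, so no contradiction arises. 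The set $\mathcal Z_{f(u)}$ can be very wild (the paper even illustrates this), and propagating a tangency across it is the crux of the whole proof.

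The paper's Proposition \ref{connectedness} is precisely the missing lemma: assuming only $u\le u_\lambda$ in $\Sigma_\lambda$ and monotonicity, it rules out any local symmetry region by a sliding-ball argument — slide a ball inside the symmetry component towards $x_N=-\infty$ until it touches $\partial\mathcal U_0\subset\mathcal Z_{f(u)}$; use H\"opf's lemma to get $\nabla u\ne0$ at the contact point, then the linearized strong maximum principle (Theorem \ref{ClassicSMPlinearized}) to show $u_y>0$ there (so the level set is a graph in $y$), and then Theorem \ref{classicalSCP}/\ref{SCPLucioeDino} to push the equality $u\equiv u_\lambda$ into the next component, iterating down through the finitely many zeros of $f$ until the equality reaches an unbounded cylinder going to $x_N=-\infty$, which does contradict \eqref{inftyassumptions}. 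Your compactness argument then becomes the paper's Proposition \ref{localizationSupport}: the limit of translates gives a nondegenerate contact point, and the contradiction is supplied by Proposition \ref{connectedness}, not by a direct appeal to the SCP. Without this propagation lemma, your step (ii) does not close, and with it, your proposal essentially reproduces the paper's proof. You also leave implicit the role of Lemma \ref{gradBelow}, which is what gives a uniform lower bound on $\partial_{x_N}u$ in a fixed slab and hence the boundary ordering needed to feed Theorem \ref{compprinciplenarrow}; that is a smaller omission but worth making explicit.
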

The proof is based on a nontrivial modification of the moving plane method. Let us recall some notations. We
define the half-space $\Sigma_\lambda$ and the hyperplane
$T_\lambda$ by
\begin{equation} \label{eq:acquaesapone} \begin{split}
\Sigma_\lambda := \{x \in \R^N \ | \ x_N < \lambda\},\ \quad  \quad
T_\lambda &:= \partial \Sigma_\lambda = \{x \in \R^N \ | \ x_N = \lambda\}
\end{split} \end{equation}
and the reflected function $u_\lambda(x)$ by
$$u_\lambda (x) = u_\lambda (x',x_N):= u(x', 2\lambda-x_N) \quad \text{in}\,\, \mathbb R^N.$$
We also define the critical set $\mathcal{Z}_{\nabla u}$ by
\begin{equation}\label{eq:criticalset}
\mathcal{Z}_{\nabla u}:=\{x \in \R^N \ | \ \nabla u(x) = 0 \}.
\end{equation}
The first step in the proof of the monotonicity is to get a property concerning the local
symmetry regions of the solution, namely any $C \subseteq \Sigma_\lambda$ such that $u \equiv u_\lambda$ in $C$.

Having in mind these notations we are able to prove the following:

\begin{prop}\label{connectedness}
Under the assumption of Theorem \ref{1Dsolution}, let us assume that $u$ is a solution to  \eqref{equation1} satisfying \eqref{inftyassumptions}, such that
\begin{itemize}
\item[(i)] $u$ is monotone non-decreasing in $\Sigma_\lambda$

\

\text{and}

\

\item [(ii)] $u \leq u_\lambda$ in $\Sigma_\lambda$.
\end{itemize}
Then $u < u_\lambda$ in $\Sigma_\lambda \setminus \mathcal{Z}_{f(u)}$.
\end{prop}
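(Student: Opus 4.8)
The plan is to combine the strong comparison principle for the $p$-Laplacian (far from the critical set) with the structure of the set $\mathcal{Z}_{f(u)}$ and the monotonicity hypothesis (i), exploiting that $u_\lambda$ solves the same equation \eqref{equation1} as $u$ (by translation/reflection invariance of $-\Delta_p$ and of the right-hand side). Fix a connected component $\mathcal{C}$ of $\Sigma_\lambda \setminus \mathcal{Z}_{f(u)}$; we must show $u < u_\lambda$ on $\mathcal{C}$. Arguing by contradiction, suppose there is a point $x_0 \in \mathcal{C}$ with $u(x_0) = u_\lambda(x_0)$; since $u \le u_\lambda$ in $\Sigma_\lambda$ by (ii), this is an interior touching point. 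The first subtlety is that Theorem \ref{classicalSCP} and Theorem \ref{ClassicSMPlinearized} only give propagation far from the critical sets $\mathcal{Z}_{\nabla u}$, $\mathcal{Z}_{\nabla u_\lambda}$, whereas here we are working on components of the complement of $\mathcal{Z}_{f(u)}$, which a priori may intersect $\mathcal{Z}_{\nabla u}$. The resolution is to observe that on $\mathcal{C}$ we have $f(u) \ne 0$, hence Theorem \ref{SMPlinearized} (and the version of the strong comparison principle on the whole domain, Theorem \ref{SCPLucioeDino}, applied on small balls inside $\mathcal{C}$) is available: on every ball $B \Subset \mathcal{C}$ the sign of $f(u)$ is constant, so the weighted Sobolev machinery applies and the strong comparison principle holds on all of $B$ without removing a critical set.

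Next I would run the standard continuation argument. Let $\mathcal{C}_0 := \{x \in \mathcal{C} : u(x) = u_\lambda(x)\}$; this set is relatively closed in $\mathcal{C}$ and, by the previous paragraph (covering $\mathcal{C}$ by small balls on each of which $f(u)$ has constant sign and applying Theorem \ref{SCPLucioeDino} to the ordered pair $u \le u_\lambda$), it is also relatively open in $\mathcal{C}$. Hence either $\mathcal{C}_0 = \emptyset$, and we are done, or $\mathcal{C}_0 = \mathcal{C}$, i.e. $u \equiv u_\lambda$ throughout $\mathcal{C}$. It remains to rule out the latter. Here is where hypothesis (i), the monotonicity $\partial_{x_N} u \ge 0$ in $\Sigma_\lambda$, together with assumption \eqref{inftyassumptions}, comes in: if $u \equiv u_\lambda$ on $\mathcal{C}$, then $u$ is symmetric about $T_\lambda$ on the reflected pair of points, which combined with monotonicity in $x_N$ forces $\partial_{x_N} u \equiv 0$ on $\mathcal{C}$ (moving toward the hyperplane decreases $x_N$ but $u$ must stay equal to its reflection, so $u$ cannot strictly increase). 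Then a ray argument in the $e_N$-direction, exactly as in Lemma \ref{lem:utile} (which treats precisely this phenomenon for $\eta = e_N \in \S^{N-1}_+$), shows that $u$ is constant along the vertical line through any $P_0 \in \mathcal{C}$ for all $x_N \le \lambda$; following this line to $-\infty$ and using $\lim_{y\to-\infty} u(x',y) = -1$ gives $u(P_0) = -1$, hence $f(u(P_0)) = 0$, contradicting $P_0 \in \mathcal{C} \subseteq \R^N \setminus \mathcal{Z}_{f(u)}$ (and, if the line exits $\mathcal{C}$ at some finite $t_0$, one lands on $\mathcal{Z}_{f(u)}$ and propagates the value back by constancy, again a contradiction — this is literally the dichotomy in the proof of Lemma \ref{lem:utile}).

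Concretely I would structure the write-up as: (1) reduce to a single connected component $\mathcal{C}$ of $\Sigma_\lambda \setminus \mathcal{Z}_{f(u)}$ and suppose a touching point exists; (2) use constancy of the sign of $f(u)$ on small balls inside $\mathcal{C}$ plus Theorem \ref{SCPLucioeDino} to get that the coincidence set is open and closed in $\mathcal{C}$, hence $u \equiv u_\lambda$ on $\mathcal{C}$; (3) deduce $\partial_{x_N} u \equiv 0$ on $\mathcal{C}$ from (i) and the reflection symmetry; (4) invoke the ray/line argument of Lemma \ref{lem:utile} to reach the contradiction with \eqref{inftyassumptions} and $\mathcal{C} \cap \mathcal{Z}_{f(u)} = \emptyset$. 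The main obstacle — and the place requiring care — is step (2): one must make sure the strong comparison principle is genuinely applicable at points of $\mathcal{C}$ that may lie in $\mathcal{Z}_{\nabla u}$ or $\mathcal{Z}_{\nabla u_\lambda}$, which is exactly why one needs the stronger statement of Theorem \ref{SCPLucioeDino} (valid on the whole domain under the one-sided sign condition on $f(u)$ or $f(u_\lambda)$) rather than the classical Theorem \ref{classicalSCP}; note $u_\lambda$ also solves \eqref{equation1} with the same $f$, so $f(u_\lambda) = f(u)\circ(\text{reflection})$ and the sign condition transfers. Once this technical point is handled, steps (3)–(4) are a direct reprise of Lemma \ref{lem:utile}.
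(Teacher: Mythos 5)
Your steps (1) and (2) are sound and match the spirit of the paper's opening move: once a touching point is found in a component $\mathcal{C}$ of $\Sigma_\lambda\setminus\mathcal{Z}_{f(u)}$, the sign of $f(u)$ is locally constant there and Theorem \ref{SCPLucioeDino} (applied on small balls, since that theorem is stated for bounded domains) yields $u\equiv u_\lambda$ on $\mathcal{C}$. You are also right that one must use Theorem \ref{SCPLucioeDino} rather than Theorem \ref{classicalSCP} because $\mathcal{C}$ may meet $\mathcal{Z}_{\nabla u}$.

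The gap is in step (3). From $u\equiv u_\lambda$ on $\mathcal{C}$ you get, for $(x',x_N)\in\mathcal{C}$,
\begin{equation*}
\partial_{x_N}u(x',x_N)\;=\;-\,\partial_{x_N}u(x',2\lambda-x_N),
\end{equation*}
and hypothesis (i) gives $\partial_{x_N}u(x',x_N)\ge 0$. But the reflected point $(x',2\lambda-x_N)$ lies in $\{x_N>\lambda\}$, where (i) says nothing; you therefore cannot conclude $\partial_{x_N}u\equiv 0$ on $\mathcal{C}$. Indeed, nothing in the hypotheses forbids $\partial_{x_N}u>0$ on $\mathcal{C}$ together with $\partial_{x_N}u<0$ on the reflected set $R(\mathcal{C})\subset\{x_N>\lambda\}$. (A one-dimensional caricature: $v(y)=-\cos(y-\lambda)$ near $y=\lambda$ is symmetric about $\lambda$, non-decreasing on $\{y<\lambda\}$, yet $v'>0$ there.) Since step (3) fails, step (4) — constancy along downward vertical rays via the argument of Lemma \ref{lem:utile} — has no foothold either; moreover $\mathcal{C}$ is a component of $\Sigma_\lambda\setminus\mathcal{Z}_{f(u)}$ rather than of $\R^N\setminus\mathcal{Z}_{f(u)}$, and at this stage of the argument one only knows $\partial_{x_N}u\ge 0$ in $\Sigma_\lambda$, so Lemma \ref{lem:utile} is not directly applicable in any case.

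What the paper does instead is precisely designed to avoid this obstruction. Rather than trying to kill $\partial_{x_N}u$ on all of $\mathcal{U}_0$, it slides a small ball from $P_0$ downward in the $e_N$-direction inside $\mathcal{U}_0$; the uniform limits \eqref{inftyassumptions} force a first contact point $z_0\in\partial\mathcal{U}_0\cap\mathcal{Z}_{f(u)}$. A Hopf-lemma argument at $z_0$ gives $|\nabla u(z_0)|\neq 0$, and a short contradiction argument (using Theorem \ref{ClassicSMPlinearized} on the segment just above $z_0$, which still lies in $\mathcal{U}_0$) upgrades $\partial_{x_N}u(z_0)\ge 0$ to $\partial_{x_N}u(z_0)>0$. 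Then Theorem \ref{classicalSCP} near $z_0$ propagates the identity $u\equiv u_\lambda$ across $\mathcal{Z}_{f(u)}$ into a neighbouring component, and, since $\mathcal{N}_f$ is finite and the level sets are graphs in $x_N$ near such contact points, finitely many iterations reduce to the case $u(z_0)=\min\bigl[\mathcal{N}_f\setminus\{-1\}\bigr]$, where an unbounded symmetric region is produced and \eqref{inftyassumptions} is contradicted. Your shortcut bypasses this entire sliding/Hopf/iteration machinery, but the step it relies on is false; the extra structure in the paper's proof is essential, not optional.
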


\begin{proof}
By \eqref{inftyassumptions}, given $0<\delta_0<1$ there exists $M_0=M_0(\delta_0)>0$, with $\lambda>-M_0$, such that  $u(x)=u(x',x_N)<-1+\delta_0$ in $\{x_N<- M_0\}$ and $u_\lambda(x)=u(x',2\lambda -x_N)>1-\delta_0$ in $\{x_N<- M_0\}$. We fix $\delta_0$ sufficiently small such that $f'(u)<-L$ in $\{x_N<- M_0\}$, for some $L>0$. Arguing by contradiction, let us assume that there exists $P_0
=(x_0',x_{N,0}) \in \Sigma_\lambda \setminus \mathcal{Z}_{f(u)}$ such
that $u(P_0)=u_\lambda(P_0)$. Let $\mathcal{U}_0$ the connected component of $\Sigma_\lambda \setminus \mathcal{Z}_{f(u)}$ containing $P_0$. By Theorem \ref{SCPLucioeDino}, since $u(P_0)=u_\lambda(P_0)$, we deduce that $\mathcal U_0$ is a local symmetry region, i.e. $u\equiv u_\lambda$ in $\mathcal{U}_0$.


We notice that, by construction, $u < u_\lambda$ in
$\Sigma_{-M_0}$, since $u(x)<-1+\delta_0$ and
$u_\lambda(x)=u(x',2\lambda - x_N) > 1-\delta_0$ in $\Sigma_{-M_0}$. Since $\mathcal{U}_0$ is an open set of $\Sigma_\lambda \setminus \mathcal{Z}_{f(u)}$ (and also of $\R^N$) there exists $\rho_0=\rho_0(P_0)>0$ such
that
\begin{equation}\label{eq:pallaprimopunto}
B_{\rho_0}(P_0) \subset \mathcal{U}_0.
\end{equation}

\begin{figure}[htbp]
	\centering
	\includegraphics[scale=.4]{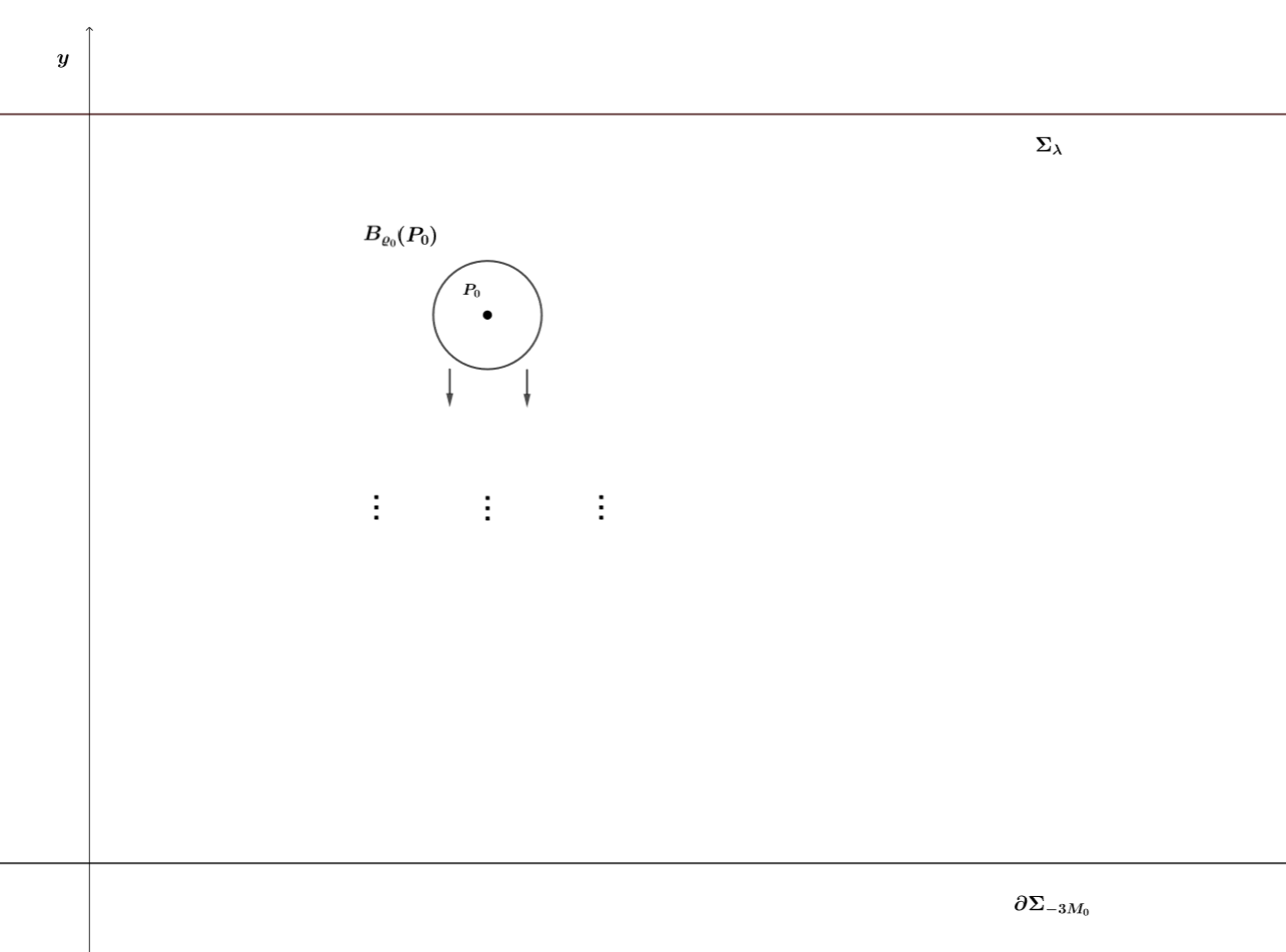}\\
	\caption{The slided ball $B_{\rho_0}(P_0)$ \label{fg:slided}}
\end{figure}

We can slide $B_{\rho_0}$ in $\mathcal U_0$, towards to $-\infty$ in the $y$-direction and keeping its centre on the line $\{x'=x'_0\}$ (see Figure \ref{fg:slided}), until it touches  for the first time  $\partial \mathcal{U}_0$ at some point $z_0\in  \mathcal{Z}_{f(u)}$. In Figure \ref{fg:1contact}, we show some possible examples of {\em first contact point} with the set $\mathcal{Z}_{f(u)}$.
\begin{figure}[htbp]
	\centering
	\includegraphics[scale=.35]{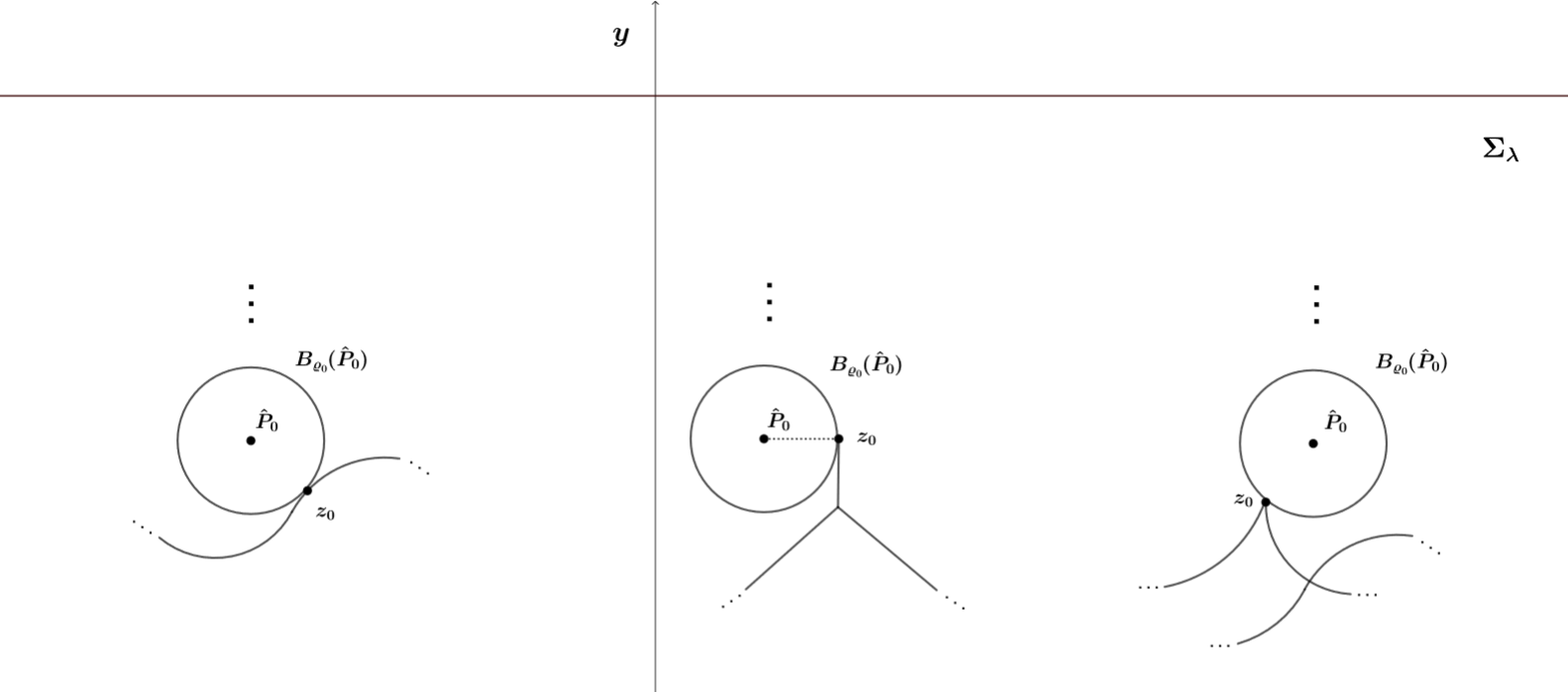}\\
	\caption{The first contact point $z_0$ \label{fg:1contact}}
\end{figure}

Now we consider the function
$$w_0(x):=u(x)-u(z_0)$$
and we observe that $w_0(x) \neq 0$ for every $x \in B_{\rho_0}(\hat P_0)$, where $\hat P_0$ is the new centre of the slided ball. In fact,  if
this is not the case there would exist a point $\bar z \in  B_{\rho_0}(\hat P_0)$
such that $w_0(\bar z)=0$, but this is in contradiction with the
fact that $\mathcal{U}_0 \cap \mathcal{Z}_{f(u)}= \emptyset$.
We
have to distinguish two
cases. Since $p < 2$ and $f$ is locally Lipschitz, we have that

\

{{Case 1:} If $w_0(x) > 0$ in $B_{\rho_0}(\hat P_0)$, then
\begin{equation}\nonumber
\begin{cases}
\Delta_p w_0 \leq C w_0^{p-1} & \quad \text{in} \ B_{\rho_0}(\hat P_0)\\ 
w_0>0 & \quad \text{in} \ B_{\rho_0}(\hat P_0)\\
w(z_0)=0 & \quad z_0 \in \partial B_{\rho_0}(\hat P_0),
\end{cases}
\end{equation}
where $C$ is a positive constant.}

{{Case 2:} If $w_0(x) < 0$ in $B_{\rho_0}(\hat P_0)$, setting $v_0 = - w_0$ we have
\begin{equation}\nonumber
\begin{cases}
\Delta_p v_0 \leq C v_0^{p-1} & \quad \text{in} \ B_{\rho_0}(\hat P_0)\\ 
v_0>0 & \quad \text{in} \ B_{\rho_0}(\hat P_0)
\\
v_0(z_0)=0 & \quad z_0 \in \partial B_{\rho_0}(\hat P_0),
\end{cases}
\end{equation}
where $C$ is a positive constant.}

In both cases, by the H\"opf boundary lemma (see e.g. \cite{pucser, Vaz}), it
follows that $|\nabla w(z_0)|=|\nabla u (z_0)| \neq 0$.

\

 Using the Implicit Function Theorem we deduce that the set $\{u=u(z_0)\}$
is a  smooth manifold near $z_0$. Now we want to prove that
\[u_{x_N}(z_0)
> 0\] and actually that the set $\{u=u(z_0)\}$ is a graph in the $y$-direction near the point $z_0$. By our assumption we know that $u_{x_N}(z_0):=u_y(z_0) \geq 0$.
According to \cite{DSCalcVar, DSJDE} and \eqref{eq:linearizzatoSCP}, the linearized operator of \eqref{equation1} is
well defined
\begin{equation}\label{linearized}
\begin{split}
L_u(u_y, \varphi) &\equiv \\
\int_{\Sigma_\lambda} [|\nabla u|^{p-2} (\nabla u_y, \nabla \varphi) +
(p-2) &|\nabla u|^{p-4} (\nabla u, \nabla u_y)(\nabla u, \nabla
\varphi)] \, dx +\\
&- \int_{\Sigma_\lambda} f'(u) u_y \varphi \, dx
\end{split}
\end{equation}
for every $\varphi \in C^1_c(\Sigma_\lambda)$. Moreover $u_y$  satisfies the
linearized equation \eqref{linearizedequation}, i.e.
\begin{equation}\label{linearizedEq}
L_u(u_y, \varphi) =0 \quad \forall \varphi \in C^1_c(\Sigma_\lambda).
\end{equation}
Let us set $z_0=(z_0',y_0)$.  We
have two possibilities: $u_y(z_0)=0$ or $u_y(z_0)>0$.

\

{\textit{Claim:} We show that the case $u_y(z_0)=0$ is not possible. }

{If $u_y(z_0)=0$, then $u_y(x) \equiv 0$ in all $B_{\hat \rho}(z_0)$ for some positive ${\hat \rho}$; to prove this we use the fact that $|\nabla u(z_0)|\neq 0$, $u\in C^{1,\alpha}$ and that Theorem \ref{ClassicSMPlinearized} holds.}

{By construction of $z_0$ there exists $\varepsilon_1 >0$ such that every point $z \in \mathcal{S}_1 := \{(z'_0,t) \in \mathcal{U}_0 : y_0 < t < y_0+ \varepsilon_1\}$ has the following properties:
\begin{enumerate}
	\item $z \in \overline{\mathcal{U}_0}$, since the ball is sliding along the segment $\mathcal{S}_1$;
	
	\item $z \not \in \partial \mathcal{U}_0$, since $z_0$ is the first contact point with $\partial \mathcal{U}_0$.
\end{enumerate}
In particular, for every $z \in \mathcal{S}_1$ we have
\begin{equation}\label{sliding}
z \in \overline{\mathcal{U}_0} \setminus \partial \mathcal{U}_0 = \mathcal{U}_0.
\end{equation}
Since $|\nabla u(z_0)|\neq 0$ and $u\in C^{1,\alpha}$, by Theorem \ref{ClassicSMPlinearized} it follows that there exists $0< \varepsilon_2 < \varepsilon_1$ such that
$$u_y(x)=0 \quad \forall x \in B_{\varepsilon_2}(z_0).$$
Let us consider $\mathcal{S}_2:=\{(z'_0,t) \in \mathcal{U}_0 : y_0 < t < y_0+ \varepsilon_2\}$; by definition $\mathcal{S}_2 \subset \mathcal{S}_1$ and every point of $\mathcal{S}_2$ belongs also to $\mathcal{Z}_{f(u)}$, since $u(z)=u(z_0)$ for every $z \in \mathcal{S}_2$ and $z_0 \in \mathcal{Z}_{f(u)}$ by our assumptions. But this gives a contradiction with \eqref{sliding}.

}
%

\

From what we have seen above,  we have $|\nabla u (z_0)| \neq 0$ and
hence there exists a ball $B_r(z_0)$ where $|\nabla u (x)| \neq 0$
for every $x \in B_r(z_0)$. By Theorem \ref{classicalSCP} it follows that $u \equiv u_\lambda$ in
$B_r(z_0)$ namely $u \equiv u_\lambda$ in a neighborhood of the point $z_0\in \partial \mathcal U_0$. Since $u_y (z_0)>0$ and $\mathcal{N}_f$ is finite
\[B_r(z_0)\cap \Big((\Sigma_\lambda \setminus \mathcal{Z}_{f(u)})\setminus \mathcal{U}_0\Big)\neq \emptyset\]
and $u_y(x) >0$ in $B_r(z_0)$, as consequence,   the set $\{u=u(z_0)\}$ is a graph in the $y$-direction in a neighborhood  of  the point $z_0$.
Now we have to distinguish two cases:
\begin{itemize}
\item[Case 1:]  $u(z_0)=\min \Big [\mathcal{N}_f\setminus\{-1\}\Big].$

\noindent Define the sets \[ \mathcal{C}_1:=\Big\{x\in \mathbb R^N \,:\,  x'\in \left( B_r(z_0) \cap \{y:=y_0\} \right) \quad \text{and}\quad u(x)<u(z_0)\Big \} \]
\[ \mathcal{C}_2:= B_r(z_0) \cup \big( (B_r(z_0) \cap \{y:=y_0\}) \times (-\infty, y_0)\big)\]
and
\[\mathcal{C}=\mathcal{C}_1\cap \mathcal{C}_2.\]
We observe that $\mathcal{C}$ is an open unbounded path-connected set (actually a deformed cylinder), see Figure \ref{fg:3lastcomponent}. Since $f(u(z_0))$ has the right sign, by Theorem \ref{SCPLucioeDino} it follows that $u\equiv u_\lambda$ in $\mathcal C$ and this in  contradiction with the uniform limit conditions \eqref{inftyassumptions}.
\begin{figure}[htbp]
	\centering
	\includegraphics[scale=.52]{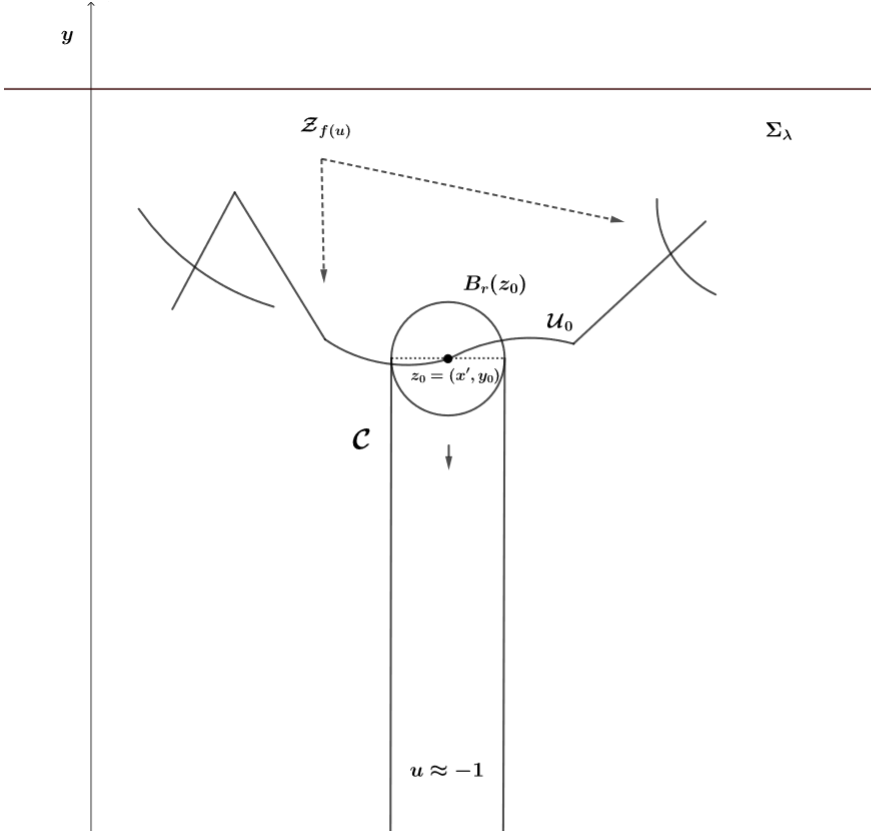}\\
	\caption{Case 1:  $u(z_0)=\min \Big [\mathcal{N}_f\setminus\{-1\}\Big]$ \label{fg:3lastcomponent}}
\end{figure}

\

\item [Case 2:] $u(z_0)>\min \Big [\mathcal{N}_f\setminus\{-1\}\Big]$.

\noindent In this case the open ball $B_r(z_0)$ must intersect another connected component  (i.e. $\not\equiv \mathcal{U}_0 $) of $\Sigma_\lambda \setminus \mathcal{Z}_{f(u)}$, such that  $u \equiv u_\lambda$ in  a such component, see Figure \ref{fg:3morecomponents}. Here we used the fact that near the (new) first contact point, the corresponding level set is a graph in the $y$-direction. Now, it is clear that repeating a finite number of times the argument leading to the existence of the touching point $z_0$, we can find a touching point $z_m$ such that \[u(z_m)=\min \Big [\mathcal{N}_f\setminus\{-1\}\Big].\] The contradiction then follows exactly as in Case 1.
\begin{figure}[htbp]
	\centering
	\includegraphics[scale=.54]{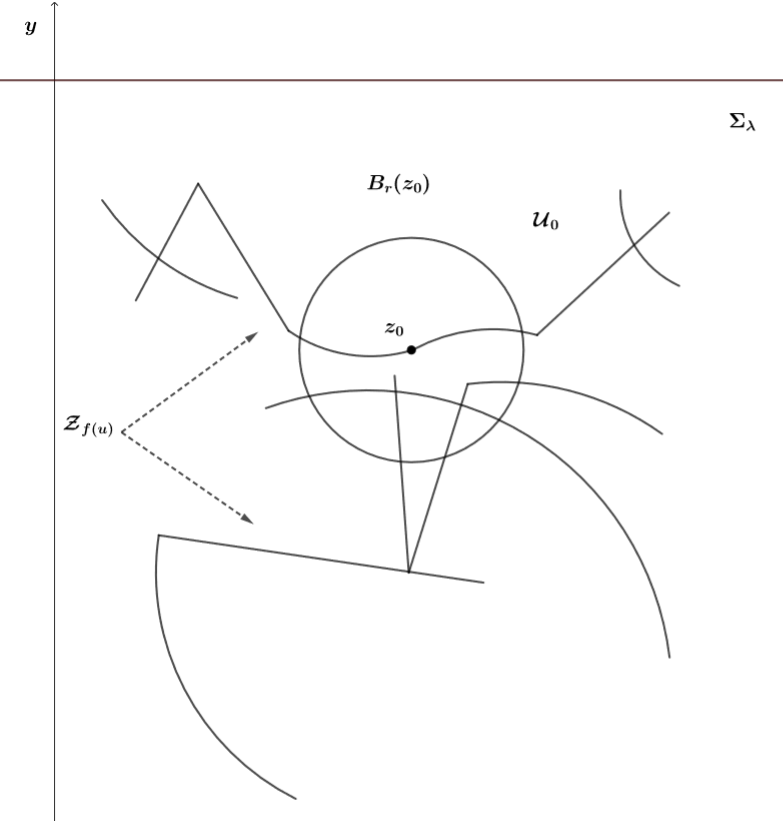}\\
	\caption{Case 2:  $u(z_0)>\min \Big [\mathcal{N}_f\setminus\{-1\}\Big]$ \label{fg:3morecomponents}}
\end{figure}
\end{itemize}
 Hence $u < u_\lambda$ in $\Sigma_\lambda \setminus
\mathcal{Z}_{f(u)}$.
\end{proof}

To prove Proposition \ref{monotonicitybis} we need of the following  result:
\begin{lem}\label{gradBelow}
	Under the assumption of Theorem \ref{1Dsolution}, let $u$ be a solution to \eqref{equation1}. Then there exist $M_0=M_0(p,f, N, \|\nabla u\|_{L^{\infty}(\mathbb{R^N})})>0$ sufficiently large such that for every $M\geq M_0$ there exits  a constant $C^*=C^*(M)>0$ such that
	\begin{equation}
	{|\nabla u|\geq\partial_{x_N} u \geq C^*> 0 \quad \text{in} \; \{-M-1<x_N < -M+1\}.}
	\end{equation}
\end{lem}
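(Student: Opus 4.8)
The plan is to split the argument into a qualitative and a quantitative part. In the qualitative part I would show that $\partial_{x_N}u>0$ on a whole lower half-space $\{x_N<-M_1\}$, where $M_1$ depends only on $f$ and on the rate of convergence in \eqref{inftyassumptions}. In the quantitative part I would upgrade this strict positivity to the stated positive lower bound on each bounded strip by a translation--compactness argument, exploiting the invariance of \eqref{equation1} under translations in the $x'$ variables.

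For the qualitative part I would first start the moving planes at $-\infty$. By assumption $(h_f)$ there exist $\delta>0$ and $L>0$ with $f'(t)<-L$ on $[-1,-1+\delta]$, and since $f(-1)=0$ this forces $f(t)<0$ on $(-1,-1+\delta]$. By Remark \ref{rem:1} we have $|u|<1$ on $\R^N$ (the alternative $u\equiv\pm1$ is ruled out by \eqref{inftyassumptions}), and by \eqref{inftyassumptions} there is $M_1>0$ with $-1\le u\le -1+\delta$, hence $f(u)<0$, on $\{x_N\le -M_1\}$. For every $\lambda\le -M_1$ the reflected function $u_\lambda$ solves \eqref{equation1}, satisfies $u=u_\lambda$ on $T_\lambda=\partial\Sigma_\lambda$, and has bounded gradient by Remark \ref{rem:1}; thus Proposition \ref{weakcomparisonprinciple}, applied on $\Sigma_\lambda=\R^{N-1}\times(-\infty,\lambda)$ with $\delta$ and $L$ as above, yields $u\le u_\lambda$ in $\Sigma_\lambda$. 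Taking $\lambda=\tfrac{s+t}{2}$ for arbitrary $s<t<-M_1$ gives $u(x',s)\le u(x',t)$, so $\partial_{x_N}u\ge 0$ on $\{x_N<-M_1\}$. Since $f(u)<0$ on $\overline{\{x_N<-M_1\}}$, Theorem \ref{SMPlinearized} then gives that either $\partial_{x_N}u>0$ or $\partial_{x_N}u\equiv 0$ on $\{x_N<-M_1\}$; in the second case $u$ would be independent of $x_N$ there and, by the uniform limit in \eqref{inftyassumptions}, equal to $-1$, contradicting $|u|<1$. Hence $\partial_{x_N}u>0$ on $\{x_N<-M_1\}$.

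For the quantitative part I would set $M_0:=M_1+2$ and fix $M\ge M_0$, so that the closed strip $\{-M-1\le x_N\le -M+1\}$ is contained in $\{x_N<-M_1\}$. Arguing by contradiction, suppose $\inf\{\partial_{x_N}u(x):-M-1<x_N<-M+1\}=0$, and pick $x_k=(x_k',x_{N,k})$ in the strip with $\partial_{x_N}u(x_k)\to 0$ and, along a subsequence, $x_{N,k}\to\xi\in[-M-1,-M+1]$. The horizontal translates $u_k(x):=u(x'+x_k',x_N)$ are solutions of \eqref{equation1} with $|u_k|\le 1$, and by the uniform estimates of Remark \ref{rem:1} they are bounded in $C^{1,\alpha}_{loc}(\R^N)$ independently of $k$; hence, along a further subsequence, $u_k\to\bar u$ in $C^1_{loc}(\R^N)$, and passing to the limit in the weak formulation (using the local uniform convergence of the gradients and $1<p$) shows that $\bar u$ is a solution of \eqref{equation1} with $|\bar u|\le 1$. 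Because the limits in \eqref{inftyassumptions} are uniform in $x'$, each $u_k$ satisfies \eqref{inftyassumptions} with the same modulus as $u$, and therefore so does $\bar u$; consequently the qualitative part applies verbatim to $\bar u$ with the \emph{same} threshold $M_1$, giving $\partial_{x_N}\bar u>0$ on $\{x_N<-M_1\}$, in particular $\partial_{x_N}\bar u(0,\xi)>0$ since $\xi\le -M+1\le -M_1-1$. On the other hand, $C^1_{loc}$ convergence together with the equi-H\"older bound on $\nabla u_k$ gives $\partial_{x_N}\bar u(0,\xi)=\lim_k\partial_{x_N}u_k(0,x_{N,k})=\lim_k\partial_{x_N}u(x_k)=0$, a contradiction. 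Hence that infimum is a positive number, which furnishes $C^*(M)$, and the bound $|\nabla u|\ge\partial_{x_N}u$ is trivial.

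The main obstacle is the quantitative step: one must verify that the limit $\bar u$ of the horizontal translates is again an admissible solution, and in particular that it retains the \emph{uniform} behaviour \eqref{inftyassumptions} — this is precisely what allows the threshold $M_1$ to be reused for $\bar u$, making the compactness argument close — as well as that the passage to the limit is legitimate for the singular operator $-\Delta_p$, which here is ensured by the uniform $C^{1,\alpha}_{loc}$ bounds of Remark \ref{rem:1} (so that $\nabla u_k\to\nabla\bar u$ locally uniformly). A secondary point to be careful about is the application of Theorem \ref{SMPlinearized} on the unbounded domain $\{x_N<-M_1\}$, which is harmless since the strong maximum principle for the linearized operator is a local-to-global statement on connected sets and $f(u)$ keeps a fixed sign there.
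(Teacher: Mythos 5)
Your proof is correct and follows essentially the paper's own route: moving planes at $-\infty$ via Proposition~\ref{weakcomparisonprinciple} together with the strong maximum principle for the qualitative sign of $\partial_{x_N}u$, and then a horizontal-translation compactness argument for the quantitative lower bound on the strip. The only (harmless) variation is in the compactness step: the paper passes the inequality $\partial_{x_N}u\geq 0$ directly to the translated limit $\tilde u$ and applies the strong maximum principle for the linearized operator to reach $\partial_{x_N}\tilde u\equiv 0$, whereas you re-run the full qualitative argument on the limit $\bar u$ after observing that it inherits the uniform asymptotics~\eqref{inftyassumptions} (so the same threshold $M_1$ works); both variants close the contradiction in the same way.
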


\begin{proof}
Performing the moving plane procedure, using \eqref{inftyassumptions} and {\bf ($h_f$)}, by the  Proposition  \ref{weakcomparisonprinciple} with $v=u_\lambda$ and $\Sigma=\Sigma_\lambda$,  we infer that there exists a constant $M_0=M_0(p,f, N, \|\nabla u\|_{L^{\infty}(\mathbb{R^N})})>0$   such  that $\partial_{x_N} u \geq 0$ in $\{x_N < -M_0+1\}$. Now we can assume $$\mathcal{Z}_{f(u)} \cap \{x_N < -M_0+1\} = \emptyset,$$ then by Theorem \ref{SMPlinearized} it follows that $\partial_{x_N} u > 0$ in $\{x_N < -M_0+1\}$, since the case $\partial_{x_N} u = 0$ would imply a contradiction, i.e. $u(x)=-1$  in $\{x_N < -M_0+1\}$ . We observe that in particular it holds $|\nabla u| \geq \partial_{x_N} u>0$ in $\{-M_0-1<x_N<-M_0+1\}$. We want to prove that for all $M\geq M_0$,  there exists  $C^*=C^*(M)>0$ such that $\partial_{x_N} u \geq C^*>0$ in $\{-M-1<x_N<-M+1\}$.

Arguing by contradiction let us assume that there exists a sequence of point $P_n=(x'_n,x_{N,n})$, with $-M-1<x_{N,n}<-M+1$ for every $n \in \N$, such that $\partial_{x_N} u (P_n) \rightarrow 0$ as $n \rightarrow + \infty$ in $\{-M-1<x_N<-M+1\}$. Up to subsequences, let us assume that
$$x_{N,n} \rightarrow \bar x_N \; \text{with} \; -M-1 \leq \bar x_N \leq -M+1.$$
Let us now define
$$\tilde u_n(x',x_N):=u(x'+x'_n, x_N)$$
so that $\|\tilde u_n\|_\infty = \| u \|_\infty \leq 1$. By standard
regularity theory, see \cite{DB, T}, we have that
$$\|\tilde u_n\|_{C^{1,\alpha}_{loc}(\R^N)} \leq C$$
for some $0 < \alpha < 1$. By Ascoli's Theorem we have
$$\tilde u_n \overset{C^{1,\alpha'}_{loc}(\R^N)}{\longrightarrow} \tilde u$$
up to subsequences, for $\alpha' < \alpha$. By construction $\partial_{x_N} \tilde u \geq 0$ and $\partial_{x_N} \tilde u (0,\bar x_N) = 0$, hence by Theorem \ref{ClassicSMPlinearized} it follows that $\partial_{x_N}\tilde u = 0$ in $\{-M-1<x_N<-M+1\}$ and therefore $\partial_{x_N}\tilde u = 0$ in all $\{(x',x_n)\,:\,x_N<-M+1\}$ by Theorem \ref{SMPlinearized}, since $\mathcal{Z}_{f(u)} \cap \{x_N < -M_0+1\} = \emptyset$. This gives a contradiction (by Theorem \ref{ClassicSMPlinearized})  with the fact that $\displaystyle\lim_{x_N\rightarrow -\infty}u(x',x_N)=-1$ (this implies that $\displaystyle\lim_{x_N\rightarrow -\infty} \tilde u(x',x_N)=-1$ ), see Remark \ref{rem:1}.
 \end{proof}
With the notation introduced above, we set
\begin{equation}\label{eq:miamnamgnam}
\Lambda := \{\lambda \in \R \ | \ u \leq u_t \; \text{in} \ \Sigma_t \; \forall t < \lambda\}.
\end{equation}

Note that, by Proposition \ref{weakcomparisonprinciple} (with $v=u_t)$, it follows
that $\Lambda \neq \emptyset$, hence we can define
\begin{equation}\label{eq:suplambda}
\bar \lambda := \sup \Lambda.\end{equation}
Moreover it is important to say that by the continuity of $u$ and $u_\lambda$, it follows that
$$u \leq u_{\bar \lambda} \quad \text{in} \; \Sigma_{\bar \lambda}.$$
The proof of the fact that  $u(x',x_N)$ is monotone increasing in the $x_N$-direction in the entire space $\mathbb R^N$ is done once show that $\bar
\lambda = + \infty$. To do this we assume by contradiction that $\bar \lambda < + \infty$, and we prove a crucial result,
which allows us to localize the support of $(u-u_{\bar \lambda})^+$.
This localization, that we are going to obtain, will be useful to
apply the weak comparison principle given by Proposition \ref{weakcomparisonprinciple} and  Theorem \ref{compprinciplenarrow}.

\begin{prop}\label{localizationSupport}
Under the assumption of Theorem \ref{1Dsolution}, let $u$ be a solution to \eqref{equation1}. Assume that $\bar
\lambda < + \infty$ (see \eqref{eq:suplambda}) and set
\begin{equation}\nonumber
W_\varepsilon:=(u-u_{\bar \lambda + \varepsilon}) \chi_{\{x_N \leq
\bar \lambda + \varepsilon\}}.
\end{equation}\label{WepsilonBis}
%
%
%
Let $M,\kappa>0$ be  such that  $M>2|\bar \lambda|$. Then  for all   $\mu\in (0, (\bar\lambda +M)/2)$ there exists $\bar \varepsilon > 0$ such that for every $0 < \varepsilon < \bar \varepsilon$
\begin{equation}\label{supportWepsilon}
\text{\emph{supp}}\, W^+_\varepsilon \subset \{x_N \leq -M\} \cup
\{ \bar \lambda - \mu \leq x_N \leq \bar \lambda + \varepsilon \} \cup
\{ |\nabla u| \leq \kappa \}.
\end{equation}
\end{prop}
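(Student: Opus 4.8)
The plan is to run a contradiction argument that combines the information already available at the critical plane $T_{\bar\lambda}$ with a compactness/translation argument in the $x'$ variables. First I record the input at $\bar\lambda$: since $u\le u_t$ in $\Sigma_t$ for every $t<\bar\lambda$ (by the very definition of $\bar\lambda$), letting $t\uparrow\bar\lambda$ and using continuity gives $u\le u_{\bar\lambda}$ in $\Sigma_{\bar\lambda}$, while choosing, for fixed $x'$ and $a<b<\bar\lambda$, the value $t=(a+b)/2<\bar\lambda$ gives $u(x',a)\le u(x',b)$, i.e. $\partial_{x_N}u\ge 0$ in $\Sigma_{\bar\lambda}$. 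Hence Proposition \ref{connectedness} applies with $\lambda=\bar\lambda$ and yields $u<u_{\bar\lambda}$ in $\Sigma_{\bar\lambda}\setminus\mathcal Z_{f(u)}$. Also, writing $K:=\|\nabla u\|_{L^\infty(\R^N)}$ (finite by Remark \ref{rem:1}, and positive since otherwise $u\equiv\pm1$, contradicting \eqref{inftyassumptions}), the elementary identity $u_{\bar\lambda+\varepsilon}(x',x_N)=u_{\bar\lambda}(x',x_N-2\varepsilon)$ gives the global bound $u_{\bar\lambda+\varepsilon}\ge u_{\bar\lambda}-2K\varepsilon$ on $\R^N$.

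Next I reduce the statement. Since $\operatorname{supp}W_\varepsilon^+\subset\{x_N\le\bar\lambda+\varepsilon\}$ and $W_\varepsilon^+>0$ exactly where $x_N\le\bar\lambda+\varepsilon$ and $u>u_{\bar\lambda+\varepsilon}$, the inclusion \eqref{supportWepsilon} reduces to proving
\[
u\le u_{\bar\lambda+\varepsilon}\qquad\text{on}\quad G:=\{-M<x_N<\bar\lambda-\mu\}\cap\{|\nabla u|>\kappa\}.
\]
By the global bound above, this will follow for every $\varepsilon<\bar\varepsilon:=\beta/(2K)$ once I prove the \emph{uniform gap}
\begin{equation}\label{planGAP}
\text{there exists }\beta>0:\qquad u\le u_{\bar\lambda}-\beta\quad\text{on}\quad \{-M\le x_N\le\bar\lambda-\mu\}\cap\{|\nabla u|\ge\kappa\}.
\end{equation}

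To establish \eqref{planGAP} I argue by contradiction, exploiting the translation invariance of \eqref{equation1} in the variables $x'$. If \eqref{planGAP} fails there are points $y_n=(y_n',t_n)$ with $t_n\in[-M,\bar\lambda-\mu]$, $|\nabla u(y_n)|\ge\kappa$ and $(u-u_{\bar\lambda})(y_n)\to 0^-$. Set $\hat u_n(x',x_N):=u(x'+y_n',x_N)$; by Remark \ref{rem:1} the sequence is bounded in $C^{1,\alpha}_{loc}(\R^N)$, so, up to a subsequence, $\hat u_n\to\hat u$ in $C^1_{loc}(\R^N)$ and $t_n\to s\in[-M,\bar\lambda-\mu]\subset(-\infty,\bar\lambda)$. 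Passing to the limit, $\hat u$ is a weak solution of \eqref{equation1} with $|\hat u|\le1$ which still satisfies the \emph{uniform} limits \eqref{inftyassumptions} (hence $|\hat u|<1$, by Remark \ref{rem:1}); moreover $\partial_{x_N}\hat u\ge0$ in $\Sigma_{\bar\lambda}$ and $\hat u\le\hat u_{\bar\lambda}$ in $\Sigma_{\bar\lambda}$ (both pass to the limit from the corresponding properties of the $\hat u_n$), and
\[
\hat u(0,s)=\hat u_{\bar\lambda}(0,s),\qquad |\nabla\hat u(0,s)|\ge\kappa>0 .
\]
Applying Proposition \ref{connectedness} to $\hat u$ (again with $\lambda=\bar\lambda$) gives $\hat u<\hat u_{\bar\lambda}$ in $\Sigma_{\bar\lambda}\setminus\mathcal Z_{f(\hat u)}$. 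On the other hand $\nabla\hat u(0,s)\ne0$, so $(0,s)\notin\{|\nabla\hat u|+|\nabla\hat u_{\bar\lambda}|=0\}$, and the Classical Strong Comparison Principle (Theorem \ref{classicalSCP}), applied to the ordered solutions $\hat u\le\hat u_{\bar\lambda}$ of $-\Delta_p w=f(w)$ in $\Sigma_{\bar\lambda}$, forces $\hat u\equiv\hat u_{\bar\lambda}$ on a ball $B$ centred at $(0,s)$. Since $\nabla\hat u(0,s)\ne0$, $\hat u$ is non-constant on $B$ and, $\mathcal N_f$ being finite, there is $q\in B$ with $\hat u(q)\notin\mathcal N_f$, i.e. $q\in\Sigma_{\bar\lambda}\setminus\mathcal Z_{f(\hat u)}$; then $\hat u(q)=\hat u_{\bar\lambda}(q)$ contradicts $\hat u<\hat u_{\bar\lambda}$ there. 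This proves \eqref{planGAP}, and with it the Proposition.

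The main obstacle is exactly that Proposition \ref{connectedness} only provides the strict inequality $u<u_{\bar\lambda}$ \emph{outside} the possibly wild set $\mathcal Z_{f(u)}$, so a crude compactness argument would only detect coincidence of the limiting profiles at a point which could lie on $\mathcal Z_{f(\hat u)}$, where no comparison is available. The hypothesis $|\nabla u|>\kappa$ on $G$ is what saves the day: it guarantees that the limit point is a \emph{regular} point of $\hat u$, where the Classical Strong Comparison Principle (valid away from critical points) propagates the equality $\hat u=\hat u_{\bar\lambda}$ to a full ball, and that ball necessarily meets $\Sigma_{\bar\lambda}\setminus\mathcal Z_{f(\hat u)}$ because $\hat u$ is non-constant and $\mathcal N_f$ is finite, which yields the contradiction.
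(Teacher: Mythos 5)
Your proof is correct and follows essentially the same route as the paper: translation in $x'$, compactness via $C^{1,\alpha}_{loc}$ bounds, then a contradiction from Proposition \ref{connectedness} and the Classical Strong Comparison Principle, using $|\nabla u|\ge\kappa$ to ensure the limit point is regular and the coincidence set spreads to a ball meeting $\Sigma_{\bar\lambda}\setminus\mathcal Z_{f(\hat u)}$. The only (harmless) repackaging is that you first reduce to an $\varepsilon$-free uniform gap $u\le u_{\bar\lambda}-\beta$ via the Lipschitz estimate $u_{\bar\lambda+\varepsilon}\ge u_{\bar\lambda}-2K\varepsilon$, whereas the paper keeps $\varepsilon_n\to 0$ inside the compactness argument and lets $u_{\bar\lambda+\varepsilon_n}\to u_{\bar\lambda}$ in the limit.
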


\begin{proof}
Assume by contradiction that \eqref{supportWepsilon} is false, so that there
exists $\mu >0$  in such a way that, given any $\bar\varepsilon>0$, we find
$0<\varepsilon \leq \bar\varepsilon$ so that there exists a
corresponding $x_\varepsilon = (x'_\varepsilon, x_{N, \varepsilon})$
such that
$$u(x'_\varepsilon, x_{N, \varepsilon}) \geq u_{\bar
\lambda + \varepsilon} (x'_\varepsilon, x_{N, \varepsilon}),$$
with $x_\varepsilon = (x'_\varepsilon, x_{N, \varepsilon})$
 belonging to the set
\[\{(x',x_N)\in \mathbb R^N\,:\,M < x_{N, \varepsilon} < \bar \lambda - \mu \}\]
and such that $ |\nabla u(x_\varepsilon)| \geq \kappa$.

Taking $\bar\varepsilon = {1}/{n}$, then there exists
$\varepsilon_n \leq 1/n$ going to zero, and a
corresponding  sequence $$x_n = (x'_n, x_{N,n}) = (x'_{\varepsilon_n}, x_{N,
\varepsilon_n})$$ such that
$$u(x'_n, x_{N, n}) \geq
u_{\bar \lambda + \varepsilon_n} (x'_n, x_{N, n})$$
with  $-M < x_{N,n} < \bar \lambda - \mu$. Up to subsequences, let us
assume that
$$x_{N,n} \rightarrow \bar x_N \; \text{with} \;
-M \leq \bar x_N \leq \bar \lambda - \mu.$$

Let us define
$$\tilde u_n(x',x_N):=u(x'+x'_n, x_N)$$
so that $\|\tilde u_n\|_\infty = \| u \|_\infty \leq 1$. By standard
regularity theory, see \cite{DB, T}, we have that
$$\|\tilde u_n\|_{C^{1,\alpha}_{loc}(\R^N)} \leq C$$
for some $0 < \alpha < 1$. By Ascoli's Theorem we have
$$\tilde u_n \overset{C^{1,\alpha'}_{loc}(\R^N)}{\longrightarrow} \tilde u$$
up to subsequences, for $\alpha' < \alpha$. By construction it follows that
\begin{itemize}
\item $\tilde u \leq \tilde u_{\bar \lambda} \quad \text{in} \
\Sigma_{\bar \lambda}$;

\item $\tilde u(0,\bar x_N) = \tilde u_{\bar \lambda} (0, \bar x_N)$;

\item $| \nabla \tilde u(0, \bar x_N)| \geq \kappa$.
\end{itemize}

Since $|\nabla \tilde{u} (0, \bar x_N)| \geq \kappa$ there exists $\rho >0$ and a ball $B_\rho(0,\bar x_N) \subset \Sigma_{\bar \lambda}$ such that $|\nabla u(x)| \neq 0$ for every $x \in B_\rho(0,\bar x_N)$. Now, if $\tilde{u}(0,\bar x_N) \in \mathcal{Z}_{f(u)}$, since $\tilde u$ is non constant  in  $B_\rho(0,\bar x_N)$, there exists  $P_0 \in B_\rho(0,\bar x_N)$ such that $u(P_0) \not \in \mathcal{Z}_{f(u)}$.
By Theorem \ref{classicalSCP} it follows that
\begin{equation}\label{disuguaglianza}
\tilde{u} \equiv \tilde{u}_{\bar{\lambda}} \quad \text{in}\,\, B_\rho(0,\bar x_N).\end{equation}
On the other hand, by Proposition \ref{connectedness} it follows that
\begin{equation}\nonumber
\tilde{u} < \tilde{u}_{\bar \lambda} \qquad \text{in} \,\,\Sigma_{\bar{\lambda}} \setminus \mathcal{Z}_{f(u)}.
\end{equation}
This gives a contradiction with \eqref{disuguaglianza}. Hence we have \eqref{supportWepsilon}.
\end{proof}

\begin{proof}[Proof of Proposition \ref{monotonicitybis}]  {Let us assume by contradiction that $\bar\lambda<+\infty$, see  \eqref{eq:suplambda}. Let $\hat M>0$ be such that Proposition \ref{weakcomparisonprinciple} and Lemma \ref{gradBelow}   apply. Let $C^*=C^*(\hat M)$ be the constant given in Lemma  \ref{gradBelow}.
By  Proposition \ref{localizationSupport} (choose $M=4 \hat M+1$ there, redefining $\hat M$  if necessary) we have that
\begin{equation}\label{eq:primac'era}\text{\emph{supp}}\, W^+_\varepsilon \subset \{x_N \leq -4\hat M-1\} \cup
\{-4\hat M +1\leq x_N \leq \bar \lambda + \varepsilon \},
\end{equation}
where $W_\varepsilon:=(u-u_{\bar \lambda + \varepsilon}) \chi_{\{x_N \leq
\bar \lambda + \varepsilon\}}$. In particular, to get \eqref{eq:primac'era}, we choose $\kappa$ in Proposition~\ref{localizationSupport} such that $2\kappa=C^*$. Then we deduce that
\begin{equation}\label{eq:secretn01}
u\leq u_{\bar\lambda+\varepsilon}\,\,\text{in}\,\, \{(x,x_N)\in \mathbb R^N\,:\ -4\hat M -1<x_N<-4\hat M +1\}.\end{equation}
Using \eqref{eq:secretn01},  we can apply Proposition \ref{weakcomparisonprinciple} in $\{x_N < -4\hat M-1\}$  and therefore, together Lemma \ref{gradBelow} and Proposition \ref{localizationSupport}, we actually  deduce\[\text{\emph{supp}}\, W^+_\varepsilon \subset \{-4\hat M +1\leq x_N \leq \bar \lambda + \varepsilon \}.\]
In particular, if we look to  \eqref{supportWepsilon},  we deduce that $\text{\emph{supp}}\, W^+_\varepsilon$ must belong to the set \[A:=\big\{\{ \bar \lambda - \mu \leq x_N \leq \bar \lambda + \varepsilon \} \cup
\{ |\nabla u| \leq \kappa \}\big\}\cap \big\{x_N\geq -4\hat M +1\big\}. \]
We now apply Theorem \ref{compprinciplenarrow} in the set $A$. Let us choose (in  Theorem \ref{compprinciplenarrow})$$L_0=1+\|\nabla u\|_{L^{\infty}(\mathbb{R}^N)}$$ and take
$\tau_0=\tau_0(p,\bar\lambda, \hat M, N, L_0)>0$
and
$\epsilon_0=\epsilon_0(p,\bar\lambda, \hat M, N, L_0)>0$ as in Theorem \ref{compprinciplenarrow}.
Let $\mu,
\varepsilon$ in Proposition \ref{localizationSupport} such that $2(\mu +\varepsilon)< \tau_0$ and let us redefine $\kappa$ eventually such that $\kappa:=\min\{C^*/2, \epsilon_0\}$.  We finally apply Theorem \ref{compprinciplenarrow} concluding  that actually $W_\varepsilon^+=0$ in the set $A$. This gives a contradiction, in view of the definition \eqref{eq:suplambda} of $\bar \lambda$. Consequently we deduce that $\bar \lambda=+\infty$. This implies the monotonicity of $u$, that is $\partial_{x_N} u \geq 0$ in $\R^N$. By Theorem \ref{SMPlinearized}, it follows that
$$\partial_{x_N} u > 0 \quad \text{in} \, \R^N \setminus
\mathcal{Z}_{f(u)},$$
since by Lemma \ref{lem:utile},  the case $\partial_{x_N} u\equiv 0$ in some connected component, say $\mathcal U$, of $ \R^N \setminus
\mathcal{Z}_{f(u)}$ can not hold.}
\end{proof}

\section{1-Dimensional Symmetry}\label{sec: 1sim}
{In this section we pass from the monotonicity in $x_N$ to the monotonicity in all the directions of the upper hemisphere $\S^{N-1}_+$ defined in \eqref{eq:hemisphere}. We refer to  \cite{Far99} for the case of the Laplacian operator, where in the proof  the linearity of the operator was crucial. Here we have to take into account the  singular nature and the nonlinearity of the operator  $p$-Laplacian.
\begin{lem}\label{monCone}
Under the same assumption of Theorem \ref{1Dsolution}, given $\rho>0$ and $k>0$, we define
\[\Sigma_k^\rho:=\{x \in \R^N \ |  -k < x_N < k\} \cap \{|\nabla u| > \rho\}.\]
Assume $\eta \in \S^{N-1}_+ $ and suppose that
\begin{equation}\label{eq:muratoricolonne}
\partial_\eta u \geq 0\quad \text{in}\,\, \mathbb R^N \quad \text{and} \quad \partial_\eta u>0\quad \text{in}\,\, \mathbb R^N\setminus \mathcal{Z}_{f(u)}.
\end{equation}
Then, there exists an open neighbourhood $\mathcal O_\eta$ of $\eta$ in $\S^{N-1}_+$,
 such that
\begin{equation}\label{dirDer1}
\partial_\nu u = (\nabla u, \nu) > 0 \quad \text{in} \, \, \Sigma_k^\rho,
\end{equation}
for every $\nu\in \mathcal O_\eta$.
\end{lem}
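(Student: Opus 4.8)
The plan is to reduce the statement to a single uniform lower bound for $\partial_\eta u$ on $\Sigma_k^\rho$: I will show that there is a constant $c_0>0$ with
\[
\partial_\eta u \geq c_0 \quad \text{in}\ \Sigma_k^\rho .
\]
Granting this, for an arbitrary $\nu$ one has $\partial_\nu u=(\nabla u,\nu)=\partial_\eta u+(\nabla u,\nu-\eta)\geq c_0-\|\nabla u\|_{L^\infty(\R^N)}\,|\nu-\eta|$ on $\Sigma_k^\rho$; since $\|\nabla u\|_{L^\infty(\R^N)}<\infty$ by Remark~\ref{rem:1}, the set
\[
\mathcal O_\eta:=\Big\{\nu\in\S^{N-1}_+\ :\ |\nu-\eta|<\tfrac{c_0}{1+\|\nabla u\|_{L^\infty(\R^N)}}\Big\}
\]
is an open neighbourhood of $\eta$ in $\S^{N-1}_+$ on which \eqref{dirDer1} holds, which is the assertion. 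So the whole point is the uniform bound.

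A preliminary observation, used twice, is that $\partial_\eta u>0$ at every point $x_0$ with $\nabla u(x_0)\neq0$. Indeed, fix a ball $B$ around $x_0$ with $\nabla u\neq0$ on $B$; if $\partial_\eta u(x_0)=0$ then, since $\partial_\eta u\geq0$ in $B$ and $B$ avoids the critical set, Theorem~\ref{ClassicSMPlinearized} forces $\partial_\eta u\equiv0$ in $B$. On the other hand, as $\mathcal N_f$ is finite and $\nabla u\neq0$ on $B$, shrinking $B$ we may assume $\mathcal Z_{f(u)}\cap B$ is empty or a single smooth level hypersurface, so $B\setminus\mathcal Z_{f(u)}\neq\emptyset$; picking $P$ there and letting $\mathcal W$ be the component of $\R^N\setminus\mathcal Z_{f(u)}$ through $P$, Lemma~\ref{lem:utile} (applicable since $\partial_\eta u\geq0$ on $\mathcal W$) gives $\partial_\eta u(P)>0$, contradicting $\partial_\eta u(P)=0$. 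This uses only $\partial_\eta u\geq0$ on $\R^N$, the finiteness of $\mathcal N_f$, Theorem~\ref{ClassicSMPlinearized} and Lemma~\ref{lem:utile}, so it applies verbatim to any weak solution of \eqref{equation1} in the class of Theorem~\ref{1Dsolution} along which a fixed direction is nondecreasing.

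To obtain the uniform bound I would argue by contradiction: suppose there are $x_n=(x'_n,x_{N,n})\in\Sigma_k^\rho$ with $\partial_\eta u(x_n)\to0$, and set $\tilde u_n(x',x_N):=u(x'+x'_n,x_N)$. Each $\tilde u_n$ is a weak solution of \eqref{equation1} with $|\tilde u_n|\leq1$, and since the limits in \eqref{inftyassumptions} are uniform in $x'$ each $\tilde u_n$ still satisfies \eqref{inftyassumptions}; hence each $\tilde u_n$ lies in the class of Theorem~\ref{1Dsolution}. By the local $C^{1,\alpha}$ bounds of Remark~\ref{rem:1} and Ascoli--Arzel\`a, up to a subsequence $\tilde u_n\to\tilde u$ in $C^{1,\alpha'}_{loc}(\R^N)$ with $\alpha'<\alpha$, and $x_{N,n}\to\bar x_N\in[-k,k]$; passing to the limit in the weak formulation, $\tilde u$ is again a weak solution of \eqref{equation1} in that same class, and $\partial_\eta\tilde u\geq0$ on $\R^N$. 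Passing to the limit in $\partial_\eta u(x_n)\to0$ and $|\nabla u(x_n)|>\rho$ yields $\partial_\eta\tilde u(0,\bar x_N)=0$ and $|\nabla\tilde u(0,\bar x_N)|\geq\rho>0$; but then the preliminary observation, applied to $\tilde u$ at $(0,\bar x_N)$, forces $\partial_\eta\tilde u(0,\bar x_N)>0$, a contradiction. Therefore $c_0:=\inf_{\Sigma_k^\rho}\partial_\eta u>0$ and the lemma follows.

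The only genuine difficulty is that $\Sigma_k^\rho$ is unbounded in the $x'$-directions, so the lower bound cannot be read off by a plain compactness/continuity argument on $\overline{\Sigma_k^\rho}$; the translation invariance of \eqref{equation1} together with the uniformity in \eqref{inftyassumptions} is precisely what makes the translate-and-pass-to-the-limit argument legitimate, and the (mild) subtlety to keep an eye on is that the limit profile $\tilde u$ must still satisfy all hypotheses needed to invoke Lemma~\ref{lem:utile}.
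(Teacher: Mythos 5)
Your proof is correct and takes essentially the same approach as the paper: translation invariance plus $C^{1,\alpha}_{loc}$ compactness to extract a limiting profile $\tilde u$, then Theorem~\ref{ClassicSMPlinearized} and Lemma~\ref{lem:utile} to obtain a contradiction at the limit point, where $\partial_\eta\tilde u$ vanishes yet $|\nabla\tilde u|\geq\rho$. Your reorganization --- first establishing the uniform lower bound $\partial_\eta u\geq c_0$ on $\Sigma_k^\rho$ and only then passing to nearby directions via the global $L^\infty$ gradient bound, and folding the paper's two-case dichotomy on whether the limit point lies in $\mathcal Z_{f(\tilde u)}$ into a single preliminary observation --- is cleaner and more modular, but it does not constitute a genuinely different route.
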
}
\begin{proof}
{Arguing by contradiction let us assume that there exist two sequences $\{P_m\}~\in \mathbb R^N$ and $\{\nu_m\} \in \S^{N-1}_+$ such that, for every $m \in \N$ we have that $P_m=(x'_m,x_{N,m}) \in \Sigma_k^\rho$,  $|(\nu_m, \eta) -1 | < 1/m$
and $\partial_{\nu_m}u(P_m) \leq 0$. Since $-k < x_{N,m} < k$ for every $m \in \N$, then up to subsequences $x_{N,m} \rightarrow \bar x_N$. Now, let us define
$$\tilde u_m(x', x_N):=u(x'+x'_m,  x_N)$$
so that $\|\tilde u_m\|_\infty = \| u \|_\infty \leq 1$. By standard
regularity theory, see \cite{DB, T}, we have that
$$\|\tilde u_m\|_{C^{1,\alpha}_{loc}(\R^N)} \leq C.$$
By Ascoli's Theorem, via a standard diagonal process, we have, up to subsequences
$$\tilde u_m \overset{C^{1,\alpha'}_{loc}(\R^N)}{\longrightarrow} \tilde u,$$
for some $0 <\alpha'< \alpha$.}

{By uniform convergence and \eqref{eq:muratoricolonne}  it follows that
\[\partial_{\eta} \tilde u(0,\bar x_N) = 0\quad \text{and}\quad | \nabla \tilde u(0, \bar x_N)| \geq \rho.\]
\begin{itemize}
\item If $P_0:=(0,\bar x_N) \in  \mathcal{Z}_{f(\tilde u)}$, since {$|\nabla \tilde u (0, \bar x_N)| \geq \rho$}, then there exists a ball $B_r(P_0)$ such that $|\nabla \tilde u (x)| \neq 0$ for every $x \in B_r(P_0)$. By Theorem \ref{ClassicSMPlinearized}, applied having in mind that $|\nabla \tilde u (x)| \neq 0$ in $B_r(P_0)$,   it follows that $\partial_{\eta} \tilde u (x) = 0$ for every $x \in B_r(P_0)$. In particular  $\partial_{\eta} \tilde u (x) = 0$ for every $x \in B_r(P_0) \cap \left(  \Sigma_k^\rho \setminus \mathcal{Z}_{f(\tilde u)} \right)$, hence by Theorem \ref{SMPlinearized} we deduce that $\partial_{\eta} \tilde u \equiv 0$ in  the connected component $\mathcal U$ of $\Sigma_k^\rho \setminus \mathcal{Z}_{f(\tilde u)}$ containing $B_r(P_0)$ (possibly redefining $r$),  but this is in contradiction with Lemma \ref{lem:utile}.
	
\item If $P_0 \in  \Sigma_k^\rho \setminus \mathcal{Z}_{f(\tilde u)}$ by Theorem \ref{SMPlinearized} it follows that $\partial_{\eta} \tilde u > 0$ in the connected component of  $\R^N \setminus \mathcal{Z}_{f(\tilde u)}$ containing the point $P_0$. Indeed  the case $\partial_{\eta} \tilde u \equiv 0$ in  the connected component of $\R^N \setminus \mathcal{Z}_{f(\tilde u)}$ containing $P_0$ can not hold since Lemma \ref{lem:utile}.
\end{itemize}
Hence we deduce \eqref{dirDer1}.}
\end{proof}

Having in mind the previous lemma, now we are able to prove the monotonicity in a small cone of direction around $\eta$ in the entire space.

\begin{prop}\label{monRncone}
{Under the assumption of Theorem \ref{1Dsolution}, assume $\eta \in \S^{N-1}_+ $ such that  ${\partial_\eta u}>0$ in $\mathbb R^N\setminus \mathcal{Z}_{f(u)}$.  Then, there exists an open neighbourhood $\mathcal O_\eta$ of $\eta$ in $\S^{N-1}_+$,
 such that
\begin{equation}\label{eq:ultimasergioaereo}
\partial_\nu u = u_\nu \geq 0 \quad \text{in} \,\, \R^N \quad \text{and} \quad \partial_\nu u = u_\nu > 0\quad \text{in} \,\, \R^N \setminus \mathcal{Z}_{f(u)},\end{equation}
for every $\nu\in \mathcal O_\eta$.}
\end{prop}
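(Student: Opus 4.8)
The plan is to run the moving plane method in the direction $\eta$, but now starting the procedure \emph{not} at spatial infinity but rather by exploiting the already-established monotonicity in the $x_N$-direction together with Lemma \ref{monCone} to get a good initial configuration. More precisely, fix a direction $\nu$ close to $\eta$ and consider hyperplanes $T^\nu_\lambda$ orthogonal to $\nu$; by the translation invariance of \eqref{equation1}, the reflected function $u^\nu_\lambda$ is again a solution, so the comparison tools of Section \ref{sec: StrongMaxCompPrin} and Section \ref{sec: pre} apply. The key structural point is that, since $u$ satisfies \eqref{inftyassumptions} uniformly in $x'$ and $\nu$ is close to $e_N$, the half-space $\Sigma^\nu_\lambda$ for $\lambda$ very negative is contained (after the reflection) in a region where $u$ is close to $-1$ and $u^\nu_\lambda$ is close to $+1$: this is exactly the setting of Proposition \ref{weakcomparisonprinciple} (with the negativity of $f'$ near $-1$ coming from $(h_f)$), which gives $u\le u^\nu_\lambda$ in $\Sigma^\nu_\lambda$ for all sufficiently negative $\lambda$. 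Hence the set $\Lambda_\nu:=\{\lambda : u\le u^\nu_t \text{ in } \Sigma^\nu_t \ \forall t<\lambda\}$ is nonempty, and we set $\bar\lambda_\nu:=\sup\Lambda_\nu$.

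Next I would show $\bar\lambda_\nu=+\infty$ by contradiction, reproducing the scheme of Proposition \ref{monotonicitybis}. Assuming $\bar\lambda_\nu<+\infty$, one first uses the analogue of Lemma \ref{gradBelow} (valid by the same argument, using that $u$ has a uniform limit $-1$ and the strong maximum principle, so that $\partial_\nu u$ is bounded below by a positive constant on a slab $\{-M-1<x_N<-M+1\}$ for $M$ large — here one also uses that $\partial_\eta u>0$ off $\mathcal Z_{f(u)}$ and $\nu$ close to $\eta$ via Lemma \ref{monCone} to control $\partial_\nu u$ in bounded slabs where $|\nabla u|>\rho$) together with the localization-of-support statement in the spirit of Proposition \ref{localizationSupport}, whose proof carries over verbatim once one knows, from Lemma \ref{monCone}, that $\partial_\nu u>0$ in $\Sigma_k^\rho$. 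The localization confines $\mathrm{supp}\,(u-u^\nu_{\bar\lambda_\nu+\varepsilon})^+$ to a set of the form (far negative slab) $\cup$ (thin slab near $\bar\lambda_\nu$) $\cup$ $\{|\nabla u|\le\kappa\}$; one kills the far-negative slab using Proposition \ref{weakcomparisonprinciple} again, and then applies the narrow-domain weak comparison principle Theorem \ref{compprinciplenarrow} (choosing $\tau,\epsilon$ small enough in terms of the thickness $\mu+\varepsilon$ and of $\kappa$) to conclude $(u-u^\nu_{\bar\lambda_\nu+\varepsilon})^+\equiv0$ on that set, contradicting the maximality of $\bar\lambda_\nu$. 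Therefore $\bar\lambda_\nu=+\infty$, which means precisely $\partial_\nu u\ge 0$ in $\R^N$.

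Finally, once $\partial_\nu u\ge 0$ in $\R^N$ is known, the strict inequality $\partial_\nu u>0$ in $\R^N\setminus\mathcal Z_{f(u)}$ follows immediately from Lemma \ref{lem:utile}: on each connected component $\mathcal U$ of $\R^N\setminus\mathcal Z_{f(u)}$, Theorem \ref{SMPlinearized} gives either $\partial_\nu u>0$ or $\partial_\nu u\equiv0$ in $\mathcal U$, and the second alternative is excluded by Lemma \ref{lem:utile} (here we need $\nu\in\S^{N-1}_+$, which holds for $\nu$ in a small enough neighbourhood $\mathcal O_\eta$ of $\eta\in\S^{N-1}_+$). The main obstacle I anticipate is making the initial step of the moving plane procedure genuinely uniform in $\nu$: one must check that the constants $M_0$, $\delta$, $L$ in Proposition \ref{weakcomparisonprinciple} and the thresholds $\tau_0,\epsilon_0$ in Theorem \ref{compprinciplenarrow} can be chosen uniformly for all $\nu$ in a fixed small neighbourhood of $\eta$, and that the tilted half-spaces $\Sigma^\nu_\lambda$ still sit inside the regions controlled by \eqref{inftyassumptions}; this is where the uniformity in $x'$ of the limits \eqref{inftyassumptions} and the closeness of $\nu$ to $e_N$ are essential, and it is the point that must be handled with care rather than quoted.
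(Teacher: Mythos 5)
Your proposal takes a genuinely different route from the paper, but it contains a fundamental gap that is not merely a matter of "handling constants with care" --- it is structural, and it is precisely the reason the paper does not run the moving plane method a second time for tilted directions.

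The obstruction is this: for a tilted direction $\nu\in\S^{N-1}_+$ with $\nu\neq e_N$, write $\nu=(\nu',\nu_N)$ with $\nu'\neq 0$. The affine half-space $\Sigma^\nu_\lambda=\{x\,:\,(x,\nu)<\lambda\}$ contains, for \emph{every} $\lambda\in\R$, points with arbitrarily large $|x_N|$ of both signs (simply send $x'\cdot\nu'\to -\infty$ while keeping $x_N$ fixed). Hence no matter how negative $\lambda$ is, $\Sigma^\nu_\lambda$ is never contained in a slab $\{x_N<-M\}$, and the hypothesis $-1\le u\le -1+\delta$ in $\Sigma$ of Proposition~\ref{weakcomparisonprinciple} cannot be arranged. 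The same phenomenon breaks the localization step: Proposition~\ref{localizationSupport} relies on the fact that $\Sigma_{\bar\lambda+\varepsilon}$ for the $e_N$-direction is a slab whose far-negative part is controlled by \eqref{inftyassumptions}, and that is false for tilted half-spaces. In other words, \eqref{inftyassumptions} gives uniform control only along $e_N$, and the moving plane procedure orthogonal to a tilted $\nu$ probes the solution in regions where no control is available. You flag this at the end as a uniformity issue to be checked, but it is not a uniformity issue: it fails already for a single fixed $\nu\neq e_N$.

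The paper circumvents this entirely by not using the moving plane for tilted directions. Instead it works directly with the linearized equation \eqref{linearizedEqA} satisfied by $u_\nu$. Lemma~\ref{monCone} first shows $\partial_\nu u>0$ on the set $\Sigma_k^\rho=\{|x_N|<k\}\cap\{|\nabla u|>\rho\}$ for $\nu$ close to $\eta$; this confines $\mathrm{supp}\,u_\nu^-$ to $A=\{|x_N|\ge k\}$ union $D=\{|x_N|<k\}\cap\{|\nabla u|\le\rho\}$. Then two separate Caccioppoli-type energy estimates with cut-offs, combined with the decay lemma (Lemma~2.1 of \cite{FMS}), kill $u_\nu^-$ on each piece: on $A$ the negativity of $f'$ near $\pm1$ from $(h_f)$ absorbs the zero-order term; on $D$ a Poincar\'e inequality in the $x_N$-variable (using that $u_\nu^-$ vanishes on $\partial A$) together with the smallness of $\rho$ and the fact that $p<2$ (so $|\nabla u|^{p-2}\ge\rho^{p-2}$) absorbs the zero-order term. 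The strict inequality off $\mathcal{Z}_{f(u)}$ then follows from Lemma~\ref{lem:utile}, as you correctly say. This linearized-operator strategy, in the spirit of \cite{Far99}, is what makes it possible to pass from one direction to nearby ones without ever invoking a comparison principle on a tilted half-space.

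So: the final paragraph of your proposal (the passage from $\partial_\nu u\ge 0$ to the strict inequality via Lemma~\ref{lem:utile}) is correct and matches the paper, but the core of your argument --- establishing $\partial_\nu u\ge 0$ by a tilted moving plane --- does not go through with the tools available, and a substantially different mechanism is required.
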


\begin{proof}
We fix $\tilde \delta>0$  and let  $k=k(\tilde \delta)>0$ be such that $u < -1 + \tilde \delta$ in $\{x_N< -k\}$,  $u > 1- \tilde \delta$ in $\{x_N>k\}$ and \eqref{eq:derivatafnegativa} holds  in $\{|x_N| > k\}$. By Lemma \ref{monCone} it follows that for all $\rho>0$ one has
$$\text{supp}\, (u_\nu^-) \subseteq \Big( \{|x_N|\geq k\} \cup \left(\{-k<x_N<k\}\cap \{|\nabla u|\leq \rho\}\right)\Big).$$
For simplicity of exposition we set
\[A:=\{|x_N|\geq k\}\quad  \text{and} \quad D:=\big(\{-k<x_N<k\}\cap \{|\nabla u|\leq \rho\}\big).\] Our claim is to show that $u_\nu^-=0$ in $A \cup D$. In order to do this we split the proof in two part.

{\em Step 1}. We show that $u_\nu^-=0$ in $A$.\\
We set
\begin{equation}\label{testA}
\varphi:=(u_\nu^-)^\alpha \varphi_R^{2} \chi_{\mathcal{A}(2R)}
\end{equation}
where $\alpha >1$,  $R >0$ large,  $\mathcal{A}(2R):=A \cap B_{2R}$ and $\varphi_R$ is a standard cutoff function such that
$0\leq \varphi_R \leq 1$ on $\R^N$, $\varphi_R = 1$ in $B_R$,
$\varphi_R = 0$ outside $B_{2R}$, with $|\nabla \varphi_R| \leq 2/R$
in $B_{2R} \setminus B_R$. First of all we notice that $\varphi$
belongs to $W^{1,p}_0(\mathcal{A}(2R))$. To see this, use the definition of $\varphi_R$  and note that by Lemma \ref{gradBelow} and Lemma \ref{monCone}, it follows  that $u_\nu^-=0$ on the hyperplanes $|x_N|=k$, namely on $\partial A$.

According to \cite{DSCalcVar, DSJDE}, the linearized operator is
well defined

\begin{equation}\label{linearizedA}
\begin{split}
L_u(u_\nu, \varphi) &\equiv \\
\int_{\R^N} [|\nabla u|^{p-2} (\nabla u_\nu, \nabla \varphi) +
(p-2) &|\nabla u|^{p-4} (\nabla u, \nabla u_\nu)(\nabla u, \nabla
\varphi)] \, dx +\\
&- \int_{\R^N} f'(u) u_\nu \varphi \, dx
\end{split}
\end{equation}
for every $\varphi \in C^1_c(\R^N)$. Moreover it satisfies the
following equation
\begin{equation}\label{linearizedEqA}
L_u(u_\nu, \varphi) =0 \quad \forall \varphi \in C^1_c(\R^N).
\end{equation}
Taking $\varphi$ defined in \eqref{testA} in the previous equation, we obtain
\begin{equation}\label{computationdir}
\begin{split}
&\alpha \int_{\mathcal{A}(2R)} |\nabla u|^{p-2} (\nabla u_\nu, \nabla u_\nu^-) (u_\nu^-)^{\alpha-1} \varphi_R^2\\
& + 2 \int_{\mathcal{A}(2R)} |\nabla u|^{p-2} (\nabla u_\nu, \nabla \varphi_R) (u_\nu^-)^\alpha \varphi_R \\
&+\alpha (p-2)  \int_{\mathcal{A}(2R)} |\nabla u|^{p-4} (\nabla u, \nabla u_\nu)(\nabla u, \nabla u_\nu^-) (u_\nu^-)^{\alpha-1} \varphi_R^2 \, dx \\
&+ 2 (p-2)  \int_{\mathcal{A}(2R)} |\nabla u|^{p-4} (\nabla u, \nabla u_\nu)(\nabla u, \nabla \varphi_R) (u_\nu^-)^\alpha \varphi_R \, dx \\
&= \int_{\mathcal{A}(2R)} f'(u) (u_\nu^-)^{\alpha+1} \varphi_R^2 \, dx
\end{split}
\end{equation}
Making some computations we obtain
\begin{equation}\label{computationdir2}
\begin{split}
&\alpha \int_{\mathcal{A}(2R)} |\nabla u|^{p-2} |\nabla u_\nu^-|^2 (u_\nu^-)^{\alpha-1} \varphi_R^2 \, dx\\
& = -2 \int_{\mathcal{A}(2R)} |\nabla u|^{p-2} (\nabla u_\nu^-, \nabla \varphi_R) (u_\nu^-)^\alpha \varphi_R \, dx\\
&+\alpha (2-p)  \int_{\mathcal{A}(2R)} |\nabla u|^{p-4} (\nabla u, \nabla u_\nu^-)^2 (u_\nu^-)^{\alpha-1} \varphi_R^2 \, dx \\
&+ 2 (2-p)  \int_{\mathcal{A}(2R)} |\nabla u|^{p-4} (\nabla u, \nabla u_\nu^-)(\nabla u, \nabla \varphi_R) (u_\nu^-)^\alpha \varphi_R \, dx \\
&+  \int_{\mathcal{A}(2R)} f'(u) (u_\nu^-)^{\alpha+1} \varphi_R^2 \, dx \\
&\leq \alpha (2-p)  \int_{\mathcal{A}(2R)} |\nabla u|^{p-2} |\nabla u_\nu^-|^2 (u_\nu^-)^{\alpha-1} \varphi_R^2 \, dx \\
&+2(3-p) \int_{\mathcal{A}(2R)} |\nabla u|^{p-2} |\nabla u_\nu^-| \ |\nabla \varphi_R| (u_\nu^-)^\alpha \varphi_R \, dx\\
&+  \int_{\mathcal{A}(2R)} f'(u) (u_\nu^-)^{\alpha+1} \varphi_R^2\, dx.
\end{split}
\end{equation}
Now it is possible to rewrite \eqref{computationdir2} as follows
\begin{equation}\label{computationdir3}
\begin{split}
&\alpha (p-1)\int_{\mathcal{A}(2R)} |\nabla u|^{p-2} |\nabla u_\nu^-|^2 (u_\nu^-)^{\alpha-1} \varphi_R^2 \, dx\\
& \leq 2(3-p) \int_{\mathcal{A}(2R)} |\nabla u|^{p-2} |\nabla u_\nu^-| \ |\nabla \varphi_R| (u_\nu^-)^\alpha \varphi_R \, dx\\
&+  \int_{\mathcal{A}(2R)} f'(u) (u_\nu^-)^{\alpha+1} \varphi_R^2 \, dx.
\end{split}
\end{equation}
Exploiting the weighted Young inequality we obtain
\begin{equation}\label{computationdir4}
\begin{split}
&\alpha (p-1)\int_{\mathcal{A}(2R)} |\nabla u|^{p-2} |\nabla u_\nu^-|^2 (u_\nu^-)^{\alpha-1} \varphi_R^2 \, dx\\
& \leq 2(3-p) \int_{\mathcal{A}(2R)} |\nabla u|^\frac{p-2}{2} |\nabla u_\nu^-| \  (u_\nu^-)^\frac{\alpha-1}{2} |\nabla u|^\frac{p-2}{2} |\nabla \varphi_R| (u_\nu^-)^\frac{\alpha+1}{2} \varphi_R \, dx\\
&+  \int_{\mathcal{A}(2R)} f'(u) (u_\nu^-)^{\alpha+1} \varphi_R^2 \, dx\\
& \leq \sigma (3-p) \int_{\mathcal{A}(2R)} |\nabla u|^{p-2} |\nabla u_\nu^-|^2   (u_\nu^-)^{\alpha-1} \, dx\\
&+ \frac{3-p}{\sigma} \int_{\mathcal{A}(2R)} |\nabla u|^{p-2} |\nabla \varphi_R|^2  (u_\nu^-)^{\alpha+1} \varphi_R^2 \, dx\\
&+  \int_{\mathcal{A}(2R)} f'(u) (u_\nu^-)^{\alpha+1} \varphi_R^2 \, dx.
\end{split}
\end{equation}
Since $u_\nu=(\nabla u, \nu)$, where $\|\nu\|=1$, we have
\begin{equation}\label{computationdir5}
\begin{split}
&\alpha (p-1)\int_{\mathcal{A}(2R)} |\nabla u|^{p-2} |\nabla u_\nu^-|^2 (u_\nu^-)^{\alpha-1} \varphi_R^2 \, dx\\
& \leq \sigma (3-p) \int_{\mathcal{A}(2R)} |\nabla u|^{p-2} |\nabla u_\nu^-|^2   (u_\nu^-)^{\alpha-1} \, dx\\
&+ \frac{3-p}{\sigma} \int_{\mathcal{A}(2R)} |\nabla u|^{p-1} |\nabla \varphi_R|^2  (u_\nu^-)^\alpha \varphi_R^2 \, dx\\
&+  \int_{\mathcal{A}(2R)} f'(u) (u_\nu^-)^{\alpha+1} \varphi_R^2 \, dx\\
& \leq \sigma (3-p) \int_{\mathcal{A}(2R)} |\nabla u|^{p-2} |\nabla u_\nu^-|^2   (u_\nu^-)^{\alpha-1} \, dx\\
&+ \hat{C} \int_{\mathcal{A}(2R)}  |\nabla \varphi_R|(u_\nu^-)^\alpha \varphi_R^2  |\nabla \varphi_R| \, dx\\
&- L  \int_{\mathcal{A}(2R)}  (u_\nu^-)^{\alpha+1} \varphi_R^2 \, dx,
\end{split}
\end{equation}
where we used \eqref{eq:derivatafnegativa} and where $\hat{C}:={3-p}/{\sigma} \| \nabla u\|^{p-1}_\infty$. Exploiting the Young inequality with exponents $({\alpha+1})/{\alpha}$ and  $\alpha+1$ we obtain
\begin{equation}\label{computationdir6}
\begin{split}
&\alpha (p-1)\int_{\mathcal{A}(2R)} |\nabla u|^{p-2} |\nabla u_\nu^-|^2 (u_\nu^-)^{\alpha-1} \varphi_R^2 \, dx\\
& \leq \sigma (3-p) \int_{\mathcal{A}(2R)} |\nabla u|^{p-2} |\nabla u_\nu^-|^2   (u_\nu^-)^{\alpha-1} \, dx\\
&+ \frac{\hat{C}}{\alpha+1}\int_{\mathcal{A}(2R)} |\nabla \varphi_R|^{\alpha+1} \, dx \\
&+ \frac{\hat{C}(\alpha+1)}{\alpha} \int_{\mathcal{A}(2R)} |\nabla \varphi_R|^{\frac{\alpha+1}{\alpha}} (u_\nu^-)^{\alpha+1} \varphi_R^{2 \frac{\alpha+1}{\alpha}} \, dx\\
&- L  \int_{\mathcal{A}(2R)}  (u_\nu^-)^{\alpha+1} \varphi_R^2 \, dx,
\end{split}
\end{equation}
Since $|\nabla \varphi_R| \leq 2/R$ in $B_{2R} \setminus B_R$, $0 \leq \varphi_R \leq 1$ in $\R^N$ and $\varphi_R=1$ in $B_R$, we obtain
\begin{equation}\label{computationdir7}
\begin{split}
\int_{\mathcal{A}(R)} |\nabla u|^{p-2} |\nabla u_\nu^-|^2 (u_\nu^-)^{\alpha-1} \, dx &\leq \vartheta \int_{\mathcal{A}(2R)} |\nabla u|^{p-2} |\nabla u_\nu^-|^2   (u_\nu^-)^{\alpha-1} \, dx\\
& + \frac{1}{\alpha(p-1)}\left( \frac{\hat{C}(\alpha+1)}{\alpha R^\frac{\alpha+1}{\alpha}}- L \right) \int_{\mathcal{A}(2R)}  (u_\nu^-)^{\alpha+1} \varphi_R^2 \, dx,\\
&+ \frac{\bar C}{R^{\alpha-(N-1)}},
\end{split}
\end{equation}
where $\vartheta:={\sigma(3-p)}/{\alpha(p-1)}$ and $\bar C: = {2\hat{C}}/{\alpha(\alpha+1)(p-1) }$. Now we fix $\alpha>0$ such that $\alpha>N-1$, $\sigma>0$ sufficiently small such that $\vartheta < 2^{-N}$ and finally $R_0>0$ such that $\displaystyle {\hat{C}(\alpha+1)}/{\alpha R^\frac{\alpha+1}{\alpha}}- L < 0$. Having in mind all these fixed parameters let us define $$\mathcal{L}(R):= \int_{\mathcal{A}(R)} |\nabla u|^{p-2} |\nabla u_\nu^-|^2 (u_\nu^-)^{\alpha-1} \, dx.$$
It is easy to see that $\mathcal{L}(R) \leq C R^N$. By \eqref{computationdir7} we deduce that holds
$$\mathcal{L}(R) \leq \vartheta \mathcal{L}(2R) + \frac{\bar C}{R^{\alpha- (N-1)}}$$
for every $R \geq R_0$.  By applying
Lemma 2.1 in \cite{FMS} it follows that $\mathcal{L}(R)=0$ for all
$R \geq R_0$. Hence passing to the limit we obtain that $u_\nu^-=0$ in $A$.

{\em Step 2}. $u_\nu^-=0$ in $D$.\\ Let us denote by $B'$  the $(N-1)$ dimensional ball   in $\R^{N-1}$ and $\psi_R(x',x_N)=\psi_R(x')\in C_c^\infty(\mathbb R^{N-1})$ is a standard cutoff function such that
\begin{equation}\label{Eq:Cut-off1}
\begin{cases}
\psi_R \equiv 1, & \text{ in } B^{'}(0,R) \subset \mathbb{R}^{N-1},\\
\psi_R \equiv 0, & \text{ in } \mathbb{R}^{N-1} \setminus B^{'}(0,2R),\\
|\nabla \psi_R | \leq \frac 2R, & \text{ in } B^{'}(0, 2R) \setminus B^{'}(0,R) \subset  \mathbb{R}^{N-1}.
\end{cases}
\end{equation}
Let us define the cylinder $$\mathcal{C}(R):=\left\{(x',x_N)\in \mathbb R^N\,:\, \{x \in \R^N \ |  -k < x_N < k\}\cap \overline{\{B^{'}(0,R)\times \mathbb{R}\}} \right\}.$$

 We set
\begin{equation}\label{testB}
\psi:=(u_\nu^-)^\beta \psi_R^{2} \chi_{\mathcal{C}(2R)}
\end{equation}
where $\beta >1$. First of all we notice that $\psi$
belongs to $W^{1,p}_0(\mathcal{C}(2R))$ by \eqref{Eq:Cut-off1} and since $u_\nu^-=0$ on $\partial A$ (as above, see  Lemma \ref{gradBelow} and Lemma \ref{monCone}).
Recalling \eqref{linearizedA} we have also in this case that
\begin{equation}\label{linearizedEqB}
L_u(u_\nu, \psi) =0 \quad \forall \psi \in C^1_c(\R^N).
\end{equation}
Taking $\psi$ defined in \eqref{testB} in the previous equation, we obtain
\begin{equation}\label{computationdirB}
\begin{split}
&\beta \int_{\mathcal{C}(2R)} |\nabla u|^{p-2} (\nabla u_\nu, \nabla u_\nu^-) (u_\nu^-)^{\beta-1} \psi_R^2\, dx\\
& + 2 \int_{\mathcal{C}(2R)} |\nabla u|^{p-2} (\nabla u_\nu, \nabla \psi_R) (u_\nu^-)^\beta \psi_R \, dx\\
&+\beta (p-2)  \int_{\mathcal{C}(2R)} |\nabla u|^{p-4} (\nabla u, \nabla u_\nu)(\nabla u, \nabla u_\nu^-) (u_\nu^-)^{\beta-1} \psi_R^2 \, dx \\
&+ 2 (p-2)  \int_{\mathcal{C}(2R)} |\nabla u|^{p-4} (\nabla u, \nabla u_\nu)(\nabla u, \nabla \varphi_R) (u_\nu^-)^\beta \psi_R \, dx \\
&= \int_{\mathcal{C}(2R)} f'(u) (u_\nu^-)^{\beta+1} \psi_R^2 \, dx.
\end{split}
\end{equation}
Repeating verbatim the same argument  of \eqref{computationdir2}, \eqref{computationdir3} and \eqref{computationdir4}, starting by \eqref{computationdirB} we obtain
\begin{equation}\label{computationdirB2}
\begin{split}
&\beta (p-1) \int_{\mathcal{C}(2R)} |\nabla u|^{p-2} |\nabla u_\nu^-|^2 (u_\nu^-)^{\beta-1} \psi_R^2 \, dx\\
& \leq \sigma (3-p)\int_{\mathcal{C}(2R)} |\nabla u|^{p-2} |\nabla u_\nu^-|^2 \  (u_\nu^-)^{\beta-1}  \, dx\\
&+  \frac{(3-p)}{\sigma}\int_{\mathcal{C}(2R)} |\nabla u|^{p-2} |\nabla \psi_R|^2(u_\nu^-)^{\beta+1} \psi_R^2 \, dx\\
&+  \int_{\mathcal{C}(2R)}  f'(u) (u_\nu^-)^{\beta+1} \psi_R^2 \, dx.
\end{split}
\end{equation}
Since $u_\nu=(\nabla u, \nu)$  and $|\nabla u| \leq \rho$ in $\mathcal{C}(2R)$ we have
\begin{equation}\label{computationdirB3}
\begin{split}
&\int_{\mathcal{C}(2R)} |\nabla u|^{p-2} |\nabla u_\nu^-|^2 (u_\nu^-)^{\beta-1} \psi_R^2 \, dx \\
& \leq \vartheta \int_{\mathcal{C}(2R)} |\nabla u|^{p-2} |\nabla u_\nu^-|^2 \  (u_\nu^-)^{\beta-1}  \, dx\\
&+ \hat{C} \rho^{p-1} \int_{\mathcal{C}(2R)} |\nabla \psi_R|^2(u_\nu^-)^{\beta} \psi_R^2 \, dx\\
&+  C_k \int_{\mathcal{C}(2R)} (u_\nu^-)^{\beta+1} \psi_R^2 \, dx.
\end{split}
\end{equation}
where $\vartheta := {\sigma(3-p)}/{\beta(p-1)}$, $\hat{C}:={(3-p)}/{\sigma \beta (p-1)}$ and $\tilde C:={\|f'\|_{L^\infty((-1,1))}}{\beta(p-1)}$. Exploiting the Young inequality with exponents $({\beta+1})/{\beta}$ and  $\beta+1$ we obtain
\begin{equation}\label{computationdirB4}
\begin{split}
&\int_{\mathcal{C}(2R)} |\nabla u|^{p-2} |\nabla u_\nu^-|^2 (u_\nu^-)^{\beta-1} \psi_R^2 \, dx\\
& \leq \vartheta \int_{\mathcal{C}(2R)} |\nabla u|^{p-2} |\nabla u_\nu^-|^2 \  (u_\nu^-)^{\beta-1}  \, dx\\
&+ \frac{\hat{C} \rho^{p-1}}{\beta+1}\int_{\mathcal{C}(2R)} |\nabla \psi_R|^{\beta+1} \, dx \\
&+ \frac{\hat{C} \rho^{p-1}(\beta+1)}{\beta} \int_{\mathcal{C}(2R)} |\nabla \psi_R|^{\frac{\beta+1}{\beta}} (u_\nu^-)^{\beta+1} \psi_R^{2 \frac{\beta+1}{\beta}} \, dx\\
&+ \tilde C \int_{\mathcal{C}(2R)}  (u_\nu^-)^{\beta+1} \psi_R^2 \, dx.
\end{split}
\end{equation}
Since $|\nabla \psi_R| \leq 2/R$ in $B'_{2R} \setminus B'_R$, $0 \leq \psi_R \leq 1$ in $\R^N$ and $\psi_R=1$ in $B'_R$, we obtain
\begin{equation}\label{computationdirB5}
\begin{split}
&\int_{\mathcal{C}(2R)} |\nabla u|^{p-2} |\nabla u_\nu^-|^2 (u_\nu^-)^{\beta-1} \psi_R^2 \, dx \\
&\leq \vartheta \int_{\mathcal{C}(2R)} |\nabla u|^{p-2} |\nabla u_\nu^-|^2 \  (u_\nu^-)^{\beta-1}  \, dx
+ \bar{C}_{R}\int_{\mathcal{C}(2R)}  (u_\nu^-)^{\beta+1} \psi_R^2 \, dx + \frac{2^{\beta+1}\hat{C} \rho^{p-1}}{(\beta+1) R^{\beta-(N-2)}}\\
&\leq \vartheta \int_{\mathcal{C}(2R)} |\nabla u|^{p-2} |\nabla u_\nu^-|^2 \  (u_\nu^-)^{\beta-1}  \, dx+ \bar{C}_{R}\int_{B'(0,2R)} \left( \int_{-k}^{k}  \left[(u_\nu^-)^\frac{\beta+1}{2} \right]^2 \, dx_N \right) \psi_R^2(x') \, dx' \\
&+ \frac{2^{\beta+1}\hat{C} \rho^{p-1}}{(\beta+1) R^{\beta-(N-2)}}\\
&\leq \vartheta \int_{\mathcal{C}(2R)} |\nabla u|^{p-2} |\nabla u_\nu^-|^2 \  (u_\nu^-)^{\beta-1}  \, dx+ \bar{C}_{R} C_p(k)^2 \frac{(\beta+1)^2}{4} \int_{\mathcal{C}(2R)} |\partial_{x_N} u_\nu^- |^2(u_\nu^-)^{\beta-1} \psi_R^2 \, dx \\& + \frac{2^{\beta+1}\hat{C} \rho^{p-1}}{(\beta+1) R^{\beta-(N-2)}}\\
&\leq \vartheta \int_{\mathcal{C}(2R)} |\nabla u|^{p-2} |\nabla u_\nu^-|^2 \  (u_\nu^-)^{\beta-1}  \, dx\\ & + \bar{C}_{R} C_p(k)^2 \frac{(\beta+1)^2}{4} \rho^{2-p} \int_{\mathcal{C}(2R)} |\nabla u|^{p-2} |\nabla u_\nu^-|^2  (u_\nu^-)^{\beta-1} \psi_R^2 \, dx + \frac{2^{\beta+1}\hat{C} \rho^{p-1}}{(\beta+1) R^{\beta-(N-2)}},
\end{split}
\end{equation}
with $\bar{C}_{R}:={\hat{C} \rho^{p-1}(\beta+1)}/{\beta R^\frac{\beta+1}{\beta}} + \tilde C$. We point out that   in \eqref{computationdirB5} we used a Poincar\'e inequality in the set $[-k,k]$ (denoting with $C_p$ the associated constant) together with  the fact that $\psi_R=\psi_R(x')$.

Finally we choose $\beta>0$ such that $\beta>N-2$, $\vartheta>0$ sufficiently small such that $\vartheta < 2^{-N+1}$ and   $\rho>0$ sufficently small such that
\[\bar{C}_{R} C_p(k)^2 \frac{(\beta+1)^2}{2} \rho^{2-p} < 1.\]Having in mind all these fixed parameters let us define $$\mathcal{L}(R):= \int_{\mathcal{C}(R)} |\nabla u|^{p-2} |\nabla u_\nu^-|^2 (u_\nu^-)^{\beta-1} \, dx.$$
It is easy to see that $\mathcal{L}(R) \leq C R^{N-1}$. By \eqref{computationdirB5} (up to a redefining of the constant involved) we deduce that
\begin{equation}\label{eq:vittpal}
\mathcal{L}(R) \leq \vartheta \mathcal{L}(2R) + \frac{C}{R^{\beta- (N-2)}}\end{equation}
holds for every $R > 0$.  By applying
Lemma 2.1 in \cite{FMS} it follows that $\mathcal{L}(R)=0$ for all
$R >0$. Since $p<2$, passing to the limit in \eqref{eq:vittpal}, we deduce that for a.e. $x\in D$
\begin{equation}\label{eq:vittcors}\text{either } u_\nu^-(x)=0,\qquad  \text{or } |\nabla u_\nu^-(x)|=0.\end{equation}
This actually implies that $u_\nu^-(x)=0$ in $D$. Indeed let us suppose that would exist a point $P\in D$ such that $u_\nu^-(P)\neq0$. Let us consider the connected  component $\mathcal U$ of $D\setminus \{x\in D\,\,:\,\, u_\nu^-(x)=0\}$ containing $P$. By the continuity of $u_\nu^-$, it follows that  $u_\nu^-=0$ on the boundary  $\partial \mathcal U$. On the other hand $u_\nu^-$ must be constant in $\mathcal U$ (since by  \eqref{eq:vittcors} $|\nabla u_\nu^-|=0$ there) .This is a contradiction.

By this two step we deduce that $u_\nu \geq  0$ \text{in} $\R^N$.  Finally by Lemma  \ref{lem:utile} we get \eqref{eq:ultimasergioaereo}.
\end{proof}
\begin{proof}[Proof of Theorem \ref{thm:gibb}] Using Proposition \ref{monotonicitybis} we get that the solution is monotone increasing in the $y$-direction and  this implies  that  $\partial_y u\geq 0$ in $\mathbb R^N$.  In particular we have $\partial_y u> 0$ in $\mathbb R^N\setminus \mathcal{Z}_{f(u)}$ by~\eqref{mmmmmm}. By Proposition \ref{monRncone}, actually we obtain that the solution is increasing in a cone of directions close to the $y$-direction. This allows  us to show that in fact, for $i=1,2,\cdots,N-1$,  $\partial_{x_i}u=0$ in $\mathbb R^N$, just exploiting the arguments in \cite{Far99} (see also \cite{FarSciuSo, FarSoave}). We provide the details for the sake completeness. Let $\Omega$ be the set of the directions $\eta \in \mathbb{S}^{N-1}_+$ for which there exists an open neighborhood $\mathcal{O}_\eta \subset \mathbb{S}^{N-1}_+$ such that
$$\partial_\nu u = u_\nu \geq 0 \quad \text{in} \,\, \R^N \quad \text{and} \quad \partial_\nu u = u_\nu > 0\quad \text{in} \,\, \R^N \setminus \mathcal{Z}_{f(u)},$$
for every $\nu \in \mathcal{O}_\eta$. The set $\Omega$ is non-empty, since $e_N \in \Omega$, and it is also open by Proposition \ref{monRncone}. Now we want to show that it is also closed. Let $\bar \eta \in \mathbb{S}^{N-1}_+$ and let us consider the sequence $\{ \eta_n\}$ in $\Omega$ such that $\eta_n \rightarrow \bar \eta$ as $n \rightarrow +\infty$ in the topology of $\mathbb{S}^{N-1}_+$. Since by our assumptions $\partial_{\eta_n} u \geq 0$ in $\R^N$, passing to the limit we obtain that  $\partial_{\bar \eta} u \geq 0$ in $\R^N$. By Lemma \ref{lem:utile} it follows that  $\partial_{\bar \eta} u > 0$ in $\R^N \setminus \mathcal{Z}_{f(u)}$. By Proposition \ref{monRncone} there exists an open neighborhood $\mathcal{O}_{\bar \eta}$ such that \eqref{eq:ultimasergioaereo} is true for every $\nu \in \mathcal{O}_{\bar \eta}$; hence $\bar \eta \in \Omega$ and this implies that $\Omega$ is also closed. Now, since $ \mathbb{S}_+^{N-1}$ is a path-connected set, we have that $\Omega=\mathbb{S}_+^{N-1}$.
Then there exists $v\in C^{1,\alpha}_{loc}(\mathbb R)$ such that $u(x',y)=v(y)$. Now, let us assume that there exists $b \in \mathcal{Z}_{f(u)}$ {$ \setminus \{ -1,1 \} $} such that {$v'(b)=0$. Then, by $u_y \geq 0$, the level set $\{v=v(b)\}$ must be a bounded closed interval (possibly reduced to a single point),} i.e., there exist $\alpha, \beta \in \R$ with $\alpha \leq \beta$
such that
$$\{v=v(b)\}=[\alpha, \beta].$$
Therefore, by H\"opf's Lemma, we have that $v'(\beta)>0$. The latter clearly implies that $\{v=v(b)\}=\{\beta\} = \{b\}$ and so $v'(b)>0$, which is in contradiction with our initial assumption. Hence we deduce that  $\partial_y u>0$ in $\mathbb R^N$, concluding the proof.
\end{proof}

\end{document}